\def\eoe{\unskip\ \hglue0mm\hfill$\diamond$\smallskip\goodbreak}
\newcommand{\calA}{\mathcal{A}}
\newcommand{\calG}{\mathcal{G}}
\newcommand{\calH}{\mathcal{H}}
\newcommand{\calI}{\mathcal{I}}
\newcommand{\LL}{{\mathbb{L}}}  % Lie 2-algebra
\newcommand{\RR}{{\mathbb{R}}}  %real numbers or cartesian space
\newcommand{\UU}{{\mathbb{U}}}  % why not
\newcommand{\VV}{{\mathbb{V}}}  % why not
\newcommand{\WW}{{\mathbb{W}}}  % why not
\newcommand{\XX}{{\mathbb{X}}}  %dunno...
\newcommand{\ZZ}{{\mathbb{Z}}}  %integers
\newcommand{\curv}{{\operatorname{curv}}}   %curvature
\newcommand{\id}{{\operatorname{id}}}  %identity
\newcommand{\pr}{{\operatorname{pr}}} %projection
\newcommand{\g}{{\mathfrak{g}}} %generic Lie algebra
\newcommand{\lmc}{\theta^L}		%% left-invariant Maurer Cartan
\newcommand{\rmc}{\theta^R}		%% right-invariant Maurer Cartan
\newcommand{\LieButterfly}{\mathsf{2TermL}_\infty^\flat} % Lie 2-algebras with butterfly morphisms--Noohi's notation
\newcommand{\LieBC}{\mathsf{2TermL}_\infty} % Lie 2-algebras with chain maps and chain homotopies a la Baez-Crans
\newcommand{\toto}{{~\rightrightarrows~}} %groupoid double arrow
\newcommand{\genz}{\frac{\partial}{\partial \theta}}  % generating vector field for a circle action
\newcommand{\dashto}{{\;\dashrightarrow\;}} %Morita isomorphism/morphism
\newtheorem*{rep@theorem}{\rep@title}
\newcommand{\newreptheorem}[2]{
\newenvironment{rep#1}[1]{
\def\rep@title{#2 ##1}
\begin{rep@theorem}}
{\end{rep@theorem}}}
\newcommand{\ifsection}[2]{\ifthenelse{\boolean{sections}}{#1}{#2}}
\theoremstyle{plain}
    \newtheorem{theorem}{Theorem}[section]
	\numberwithin{equation}{section}
	\numberwithin{figure}{section}
    \newtheorem{theorem}{Theorem}
\newtheorem{proposition}[theorem]{Proposition}
\newtheorem{corollary}[theorem]{Corollary}
\newtheorem{lemma}[theorem]{Lemma}
\theoremstyle{definition}
\newtheorem{definition}[theorem]{Definition}
\newtheorem{example}[theorem]{Example}
\newtheorem{remark}[theorem]{Remark}
\newcommand{\ifwork}[1]{\ifthenelse{\boolean{workmode}}{#1}{}}
\newcommand{\comment}[1]{}
\newcommand{\mute}[1]{}
\newcommand{\printname}[1]{}
\renewcommand{\comment}[1]{{\marginpar{*}\ \scriptsize{#1}\ }}
\renewcommand{\mute}[1]{{\scriptsize \ #1\ }\marginpar{\scriptsize muted}}
\renewcommand{\printname}[1]
    {\smash{\makebox[0pt]{\hspace{-1.0in}\raisebox{8pt}{\tiny #1}}}}
\author{Derek Krepski}
\address{
Department of Mathematics, University of Manitoba, %\newline
Winnipeg, MB, Canada %R3T 2N2
}
\email{\href{mailto:Derek.Krepski@umanitoba.ca}{Derek.Krepski@umanitoba.ca}}
\author{Jennifer Vaughan}
\address{
Department of Mathematics, University of Manitoba, %\newline
Winnipeg, MB, Canada %R3T 2N2
}
\email{\href{mailto:Jennifer.Vaughan@umanitoba.ca}{Jennifer.Vaughan@umanitoba.ca}}
\title[Multiplicative vector fields on bundle gerbes]{Multiplicative vector fields on bundle gerbes}
\date{\today}
\thanks{The authors acknowledge the support of the Natural Sciences and Engineering Research Council of Canada (RGPIN-2015-05833), and the Pacific Institute of Mathematical Sciences. }
\begin{document}

\begin{abstract}

Infinitesimal symmetries of $S^1$-bundle gerbes are modelled by multiplicative vector fields on Lie groupoids.  It is shown that a connective structure on a bundle gerbe gives rise to a natural horizontal lift of multiplicative vector fields to the bundle gerbe, and that the 3-curvature presents the obstruction to the horizontal lift being a morphism of Lie 2-algebras.  Connection-preserving multiplicative vector fields on a bundle gerbe with connective structure are shown to inherit a natural Lie 2-algebra structure; moreover, this Lie 2-algebra is canonically quasi-isomorphic to the Poisson-Lie 2-algebra of the 2-plectic base manifold $(M,\chi)$, where $\chi$ is the 3-curvature of the connective structure.  As an application of this result, we give  analogues of a formula of Kostant in the 2-plectic and quasi-Hamiltonian contexts.

\end{abstract}

\maketitle
%\tableofcontents

\section{Introduction} \label{s:intro}

Let $M$ be a smooth manifold. A well-known Theorem of Weil \cite{weil1952} shows that the set of  isomorphism classes of principal $S^1$-bundles over $M$ are in one-to-one correspondence with $H^2(M;\ZZ)$. The analogous objects corresponding to cohomology classes in  $H^3(M;\ZZ)$ are known as $S^1$-gerbes,  introduced in differential geometry by Brylinski \cite{brylinski2007loop} using the formalism of stacks (presheaves of groupoids), following Giraud \cite{giraud1971}. Other models for $S^1$-gerbes include $S^1$-central extensions of Lie groupoids (e.g., see Behrend-Xu \cite{behrend2011differentiable}), $S^1$-bundle gerbes (equivalent to $S^1$-central extensions of submersion groupoids), due to Murray \cite{murray1996bundle} (see also Hitchin \cite{hitchin2001lectures} and Chatterjee \cite{chatterjee1998construction}), Dixmier-Douady bundles \cite{dixmier1963champs}, differential cocycles \cite{krepski2018differential}, and principal Lie 2-group bundles \cite{baez2007higher,nikolaus2013four,wockel2011principal}. 

The present paper primarily adopts the $S^1$-bundle gerbe perspective (see Section \ref{ss:gerbes} for a review of bundle gerbes) to establish some results for $S^1$-gerbes that are analogous to well-known results for their degree-2 counterparts, principal $S^1$-bundles. Moreover, as described in more detail below, these results, and fundamentally the backdrop of multiplicative vector fields, lead naturally into further results on `higher structures' in differential geometry---namely, 2-plectic geometry \cite{rogers20132plectic}, homotopy moment maps \cite{callies2016homotopy}, and group-valued moment maps \cite{alekseev1998lie}. 
Indeed, we explain how multiplicative vector fields encode (connection preserving) infinitesimal symmetries of $S^1$-bundle gerbes (with connective structure); that the underlying Lie 2-algebra of such symmetries is quasi-isomorphic to the Poisson Lie 2-algebra of the base manifold; and that multiplicative vector fields provide a natural setting  for 2-plectic and quasi-Hamiltonian analogues of a formula of Kostant from symplectic geometry.

\bigskip

To motivate our discussion, consider a principal $S^1$-bundle $P\to M$ with connection $\gamma\in \Omega^1(P)$ and let $\mathsf{curv}(\gamma) \in \Omega^2(M)$ denote the curvature of the connection. It is well known that the connection determines a horizontal lift of vector fields on $M$ to vector fields on $P$,
\[
\mathsf{Lift}_\gamma: \mathfrak{X}(M) \to \mathfrak{X}(P),
\]
 which is a Lie algebra homomorphism if and only if $\mathsf{curv}(\gamma)=0$. 
 What is the  analogue for $S^1$-bundle gerbes? 
 
 To  arrive at the analogue, we employ \emph{multiplicative vector fields} on Lie groupoids. Recall  from \cite{mackenzie1998classical} that a multiplicative vector field on a Lie groupoid $\mathbf{G}=\{ G_1 \toto G_0 \}$ is a Lie groupoid morphism (i.e., a functor) $\mathbf{x}: \mathbf{G} \to T\mathbf{G} = \{ TG_1 \toto TG_0 \}$ such that $\pi_{\mathbf{G}} \circ \mathbf{x} = \id_{\mathbf{G}}$ where $\pi_{\mathbf{G}}: T\mathbf{G} \to \mathbf{G}$ is the tangent bundle projection functor. Hence a multiplicative vector field $\mathbf{x}$ is described by a pair $(\mathbf{x}_0, \mathbf{x}_1)$ of vector fields, $\mathbf{x}_0 \in \mathfrak{X}(G_0)$ and $\mathbf{x}_1\in \mathfrak{X}(G_1)$,  that are compatible with the Lie groupoid structure. The   multiplicative vector fields  on $\mathbf{G}$ are the objects of a category $\mathbb{X}(\mathbf{G})$, whose morphisms are natural transformations $\eta:\mathbf{x} \Rightarrow \mathbf{x}'$ satisfying $\pi_\mathbf{G}(\eta(x))=\epsilon(x)$ for every object $x\in G_0$, where $\epsilon:G_0\to G_1$ denotes the unit map.  
 By a Theorem of Hepworth  \cite[Theorem 4.15]{hepworth2009vector}, multiplicative vector fields on a Lie groupoid $\mathbf{G}$ correspond to vector fields on the associated differentiable stack $B\mathbf{G}$. 
 
An $S^1$-gerbe on a smooth manifold $M$ is a differentiable stack, which may be presented as a bundle gerbe over $M$, or equivalently an $S^1$-central extension 
$ P \toto X$ of the submersion Lie groupoid $X\times_M X \toto X$ associated to a surjective submersion $X\to M$ (see Section \ref{ss:centext}). Therefore, vector fields on the $S^1$-gerbe may be modelled by multiplicative vector fields on the Lie groupoid $P\toto X$. In fact, as we show in Theorem \ref{t:equiv-mvf-col}, the category $\mathbb{X}(P\toto X)$ of multiplicative vector fields on $P\toto X$ is equivalent to the category described by Collier \cite{collier-PhDThesis2012} of \emph{infinitesimal symmetries} of the corresponding $S^1$-gerbe, viewed as a stack. Accordingly, a principal objective of this work is to illustrate how multiplicative vector fields naturally model infinitesimal symmetries of bundle gerbes, independent of the machinery of stacks.

Additionally, by a recent result of Berwick-Evans and Lerman \cite{berwick2016lie}, multiplicative vector fields on a Lie groupoid naturally have a structure of a Lie 2-algebra. In this work, a Lie 2-algebra is a 2-term $L_\infty$-algebra (e.g., see Baez-Crans \cite{baez2004higher})---that is, a 2-term chain complex $L_1 \to L_0$, equipped with a graded bracket and a chain homotopy coherently measuring the failure of the bracket to satisfy the Jacobi identity. We work within a localization of the bicategory of 2-term $L_\infty$-algebras with respect to quasi-isomorphisms---namely, Noohi's bicategory of Lie 2-algebras with \emph{butterflies} as 1-morphisms  (see Section \ref{ss:Lie2defs} for a review of the bicategory of Lie 2-algebras).

We thus arrive at a natural setting in which to formulate the bundle gerbe analogue of the aforementioned result on horizontal lifts. Indeed, suppose an $S^1$-gerbe over $M$ is presented by a bundle gerbe $P\toto X$, an $S^1$-central extension of $X\times_M X \toto X$, where $X\to M$ is a surjective submersion. The Lie 2-algebra of multiplicative vector fields $\mathbb{X}(X\times_M X \toto X)$ is naturally quasi-isomorphic to the Lie algebra $\mathfrak{X}(M)$ of vector fields on $M$, and we show in Theorem \ref{p:3curv_obs} that a connective structure on  $P \toto X$ (i.e., a  connection $\gamma \in \Omega^1(P)$ and curving $B\in \Omega^2(X)$) gives rise to a functor
\[
\mathsf{Lift}_\gamma: \mathbb{X}(X\times_M X \toto X) \longrightarrow \mathbb{X}(P\toto X),
\]
which is a morphism of Lie 2-algebras if and only if the associated 3-curvature $3\text{-}\mathsf{curv}(B,\gamma) \in \Omega^3(M)$ of the connective structure vanishes. (\emph{Cf}.\ Remark \ref{r:3curvCollier}.)

Given a bundle gerbe $P\toto X$ with connective structure $(B,\gamma)$, we also consider  multiplicative vector fields $(\mathbf{x},\mathbf{p})$ that preserve the connective structure in an appropriate sense. Bearing in mind the definition of isomorphism for bundle gerbes with connective structure (see Definition \ref{d:gerbe1arrow}), we should not expect that the vector fields $\mathbf{x}$ and $\mathbf{p}$ preserve $B$ and $\gamma$ on the nose. 
%(See also the motivating discussion preceding Definition \ref{d:connexpresmultivf} for further details along these lines). 
Instead, we propose in Definition \ref{d:connexpresmultivf} that the multiplicative vector field  $(\mathbf{x},\mathbf{p})$   preserves the connective structure $(B,\gamma)$ if
\[
L_{\mathbf{x}} B = d\alpha, \quad \text{and} \quad L_{\mathbf{p}} \gamma= \delta \alpha, \quad \text{for some }\, \alpha\in \Omega^1(X),
\]
where $\delta = \Sigma (-1)^j \delta_j^*$ is the simplicial differential for $P\toto X$.
 We show that the collection $\mathbb{X}(P; B,\gamma)$ of connection-preserving multiplicative vector fields is a subcategory of $\mathbb{X}(P\toto X)$  (see Corollary \ref{c:subcat} for the precise statement), and it naturally inherits a Lie 2-algebra structure  (see Proposition \ref{p:multivfLie2}). Moreover, we verify in Theorem \ref{t:equiv-mvf-col-withconnex} that $\mathbb{X}(P; B,\gamma)$ is equivalent to the 2-category of connection-preserving infinitesimal symmetries of the gerbe (i.e., viewed as a stack) considered in \cite{collier2011infinitesimal,collier-PhDThesis2012}.

The Lie 2-algebra 
%of \emph{connection-preserving} multiplicative vector fields on a bundle gerbe  
$\mathbb{X}(P; B,\gamma)$
plays a natural role in establishing some other results that may be viewed as 2-plectic and quasi-Hamiltonian analogues of classical results in symplectic geometry. 
Recall that 
for a 2-plectic manifold $(M,\chi)$, the analogue of the Poisson algebra of smooth functions on a symplectic manifold is the Poisson-Lie 2-algebra $\mathbb{L}(M,\chi)$ (see Definition \ref{d:rogers}, due to Rogers \cite{rogers20132plectic}). 
%Consider a bundle gerbe $P\toto X$ over $M$, equipped with a connective structure $(B,\gamma)$ as above, with $3\text{-}\mathsf{curv}(B,\gamma) =\chi$ (i.e., a \emph{prequantization} of $(M,\chi))$. 
In Theorem \ref{t:invertbutterflymulti-rogers}, we show that $\mathbb{L}(M,\chi)$ is canonically quasi-isomorphic to the Lie 2-algebra $\mathbb{X}(P; B,\gamma)$ of connection-preserving multiplicative vector fields on $P\toto X$ whenever $\chi=3\text{-}\mathsf{curv}(B,\gamma)$. In other words, the Poisson Lie 2-algebra $\mathbb{L}(M,\chi)$ of a 2-plectic manifold is quasi-isomorphic to the Lie 2-algebra of infinitesimal symmetries of a \emph{prequantization} of $(M,\chi)$, a bundle gerbe $P\toto X$ over $M$, equipped with a connective structure $(B,\gamma)$ whose 3-curvature is $\chi$. Such infinitesimal symmetries are sometimes called \emph{infinitesimal quantomorphisms} in the literature---e.g., see  \cite{fiorenza2014algebras} wherein the authors prove a similar result (\emph{cf}.\  Remark \ref{r:FRS}); 
see also \cite{syvestre2020prequantisation} (\emph{cf}.\ Remark \ref{r:SW}).

As  applications of  the quasi-isomorphism $\mathbb{L}(M,\chi)\cong \mathbb{X}(P; B,\gamma)$ in Theorem \ref{t:invertbutterflymulti-rogers}, we propose 2-plectic and quasi-Hamiltonian analogues of a formula of Kostant.  Let $G$ be a compact connected Lie group with Lie algebra $\g$. Given a  Hamiltonian $G$-action on a symplectic manifold $(M,\omega)$, a formula due to Kostant gives a lift of the infinitesimal $\g$-action on $M$  to a $\g$-action on a prequantization of $(M,\omega)$, a principal $S^1$-bundle $P\to M$ with connection whose curvature equals $\omega$, by infinitesimal quantomorphisms. 
In Section \ref{s:kost}, we show in Corollary \ref{c:2KostantLift} that a Hamiltonian $G$-action on a 2-plectic manifold $(M,\chi)$ (i.e., one admitting  a \emph{homotopy moment map} as in \cite{callies2016homotopy}) naturally lifts to a `weak' infinitesimal $\g$-action on a prequantization of $(M,\chi)$ by infinitesimal quantomorphisms---i.e, a butterfly morphism 

\[
\g \dashto \mathbb{X}(P;B,\gamma),
\]
where $P\toto X$ is a bundle gerbe over $M$ with connective structure $(B,\gamma)$.
Moreover, if the bundle gerbe is a (strongly) $G$-equivariant bundle gerbe, we show in Theorem \ref{t:g-actsagree} that the above lift agrees with the underlying $\g$-actions on $P$ and $X$ by multiplicative vector fields that preserve $B$ and $\gamma$ on the nose (i.e. with $\alpha=0$). 
For quasi-Hamiltonian spaces with Lie group-valued moment maps, a similar lift is obtained in Theorem \ref{t:qHamKost}. 
These results help clarify some of the concluding questions raised in
\cite[Section 13]{callies2016homotopy}.

\bigskip

\subsection*{Organization of the paper} 

In \textbf{Section \ref{s:prelim}}, we review some preliminaries to establish notation used throughout the paper. 
In \textbf{Section \ref{s:gerbes}}, we recall some foundational aspects of bundle gerbes and begin our study of the categories of multiplicative vector fields  and  connection-preserving multiplicative vector fields on bundle gerbes. We provide a sketch of the equivalence between the category of multiplicative vector fields on a bundle gerbe and the category of infinitesimal symmetries of the underlying gerbe (Theorem \ref{t:equiv-mvf-col}), and similarly for multiplicative vector fields and infinitesimal symmetries that preserve a connective structure (Theorem \ref{t:equiv-mvf-col-withconnex}).
 In \textbf{Section \ref{s:lie2}}, we review Noohi's 2-category of Lie 2-algebras with butterfly morphisms. We establish Theorem \ref{p:3curv_obs} characterizing the 3-curvature as the obstruction to the horizontal lift defining a morphism of Lie 2-algebras. We also introduce (Proposition \ref{p:multivfLie2}) the natural Lie 2-algebra structure of connection-preserving multiplicative vector fields on a bundle gerbe.
In \textbf{Section \ref{s:app}}, we establish some results in 2-plectic geometry and quasi-Hamiltonian geometry that are analogous to some classical results in symplectic geometry. In particular, we show (Theorem \ref{t:invertbutterflymulti-rogers}) that the Lie 2-algebra of  multiplicative vector fields preserving the connective structure on a bundle gerbe is canonically quasi-isomorphic to the Poisson-Lie-2-algebra of the 2-plectic manifold $(M,\chi)$, where $\chi$ denotes the 3-curvature of the connective structure. We conclude with a 2-plectic analogue (Corollary \ref{c:2KostantLift}) and a quasi-Hamiltonian analogue (Theorem \ref{t:qHamKost}) of a formula in symplectic geometry due to Kostant that lifts Hamiltonian Lie-algebra actions to infinitesimal quantomorphisms.

\section{Preliminaries and notation} \label{s:prelim}

Throughout this paper, all manifolds and maps between manifolds are assumed to be smooth.  We assume the reader has some familiarity with Lie groupoids, though we recall some aspects next to establish notation.
Recall that a Lie groupoid  $G_1 \toto G_0$ gives rise to a simplicial manifold $G_\bullet$:
for $k \geq 2$, write 
\[
G_k = \underbrace{G_1 \times_{G_0} G_1 \times_{G_0} \cdots \times_{G_0} G_1}_{k \text{ factors}}
\]
whose elements are $k$-tuples $(g_1, \ldots, g_k)$ of composable arrows (with $s(g_i)=t(g_{i+1})$).  For $0 \leq i \leq k$, let $\partial_i:G_k \to G_{k-1}$ be the \emph{face maps} given by
\[
\partial_i(g_1, \ldots, g_k) = \left\{ 
\begin{array}{ll}
(g_2, \ldots, g_k) & \text{if }i=0 \\
(g_1, \ldots, g_ig_{i+1},\ldots, g_k) & \text{if }0<i<k \\
(g_1, \ldots, g_{k-1}) & \text{if }i=k. \\
\end{array}
\right.
\]
For convenience, we set $\partial_0=s$ and $\partial_1=t$ on $G_1$. The face maps satisfy the simplicial identities
\begin{align} \label{eq:simplicialidentities}
\partial_i \partial_j & = \partial_{j-1} \partial_i \quad \text{for }i<j, 
\end{align}
and hence $\partial = \sum (-1)^i \partial_i^*$ defines a differential for the complexes $\Omega^k(G_{\bullet})$, $k\geq 0$. 

In particular, for a surjective submersion, $\pi: X\to M$, we may consider the \emph{submersion groupoid} $X\times_M X \toto X$, and obtain the simplicial manifold $X^{[\bullet+1]}$, where
\[
X^{[n]} = \{ (x_1, \ldots, x_{n}) \, | \, \pi(x_1) = \cdots =\pi(x_{n})\}.
\]
As shown in \cite{murray1996bundle}, the following complexes are exact:
\begin{align} \label{eq:deltaexact}
0 \longrightarrow \Omega^k(M) \stackrel{\pi^*}{\longrightarrow} \Omega^k(X) \stackrel{\partial}{\longrightarrow} \Omega^k(X^{[2]}) \stackrel{\partial}{\longrightarrow} \cdots
\end{align}
Similarly, for proper Lie groupoids $G_1 \toto G_0$, the sequence
\[
\Omega^0(G_0) \stackrel{\partial}{\longrightarrow} \Omega^0(G_1) \stackrel{\partial}{\longrightarrow} \Omega^0(G_2) \stackrel{\partial}{\longrightarrow} \cdots
\]
is exact \cite{crainic2003differentiable}.

For a compact Lie group $K$ acting on manifolds $X$ and $M$, and a $K$-equivariant surjective submersion $\pi:X\to M$, there are natural $K$-equivariant analogues of the above. Recall briefly that for a $K$-manifold $M$, the Cartan model of $K$-equivariant differential forms is given by $\Omega_K(M) = (S(\g^*)\otimes \Omega(M))^K$, the algebra of $K$-equivariant polynomials $\alpha:\g \to \Omega(M)$, with differential $(d_K\alpha)(\xi) = d\alpha(\xi) - \iota_{\xi_M}\alpha(\xi)$. Here, $\xi_M \in \mathfrak{X}(M)$ denotes the generating vector field for $\xi \in \g$: $\xi_M(m) = \frac{d}{dt}|_{t=0} \exp(-t\xi)\cdot m$. The map $\partial$ extends to $K$-equivariant forms, and the $K$-equivariant version of \eqref{eq:deltaexact} is also exact---see \cite{stienon2010equivariant}. 

Finally, for a principal $S^1$-bundle $P\to M$, we identify $\mathrm{Lie}(S^1) \cong \RR$ and write $\genz $ for the generating vector field of the principal $S^1$-action.

\section{$S^1$-gerbes over manifolds} \label{s:gerbes}

This section employs multiplicative vector fields on Lie groupoids as infinitesimal symmetries of $S^1$-bundle gerbes, and introduces their connection-preserving counterparts.  After a brief review of $S^1$-bundle gerbes, we consider some basic properties of  the category of  multiplicative vector fields on bundle gerbes, showing that the connection-preserving multiplicative vector fields form a full subcategory (Corollary \ref{c:subcat}). In Section \ref{ss:infsym}, we sketch an equivalence of categories between (connection-preserving) multiplicative vector fields on bundle gerbes (with connective structure) and the category of (connection-preserving) infinitesimal symmetries of $S^1$-banded gerbes \cite{collier2011infinitesimal}.

\subsection{Bundle gerbes with connective structure} \label{ss:gerbes}

We review some elementary aspects of  $S^1$-bundle gerbes over a manifold, following \cite{nikolaus2011equivariance}.  For a detailed introduction to bundle gerbes, we refer the reader to Murray's original paper \cite{murray1996bundle} as well as \cite{murray2000bundle} and \cite{stevenson2000geometry}. 

\begin{definition} \label{d:bgerbe}
Let $M$ be a manifold. An \emph{$S^1$-bundle gerbe} (or simply, \emph{bundle gerbe})  $\calG=(X, P, \mu)$ over $M$   consists of a surjective submersion $\pi:X\to M$, a principal $S^1$-bundle $P\to X^{[2]}$, and a bundle isomorphism $\mu:\partial_2^*P \otimes \partial_0^*P \to \partial_1^*P$  over $X^{[3]}$, called the gerbe product, satisfying the associativity condition $\partial_1^*\mu \circ (\partial_3^*\mu\otimes \id) = \partial_2^*\mu \circ (\id \otimes \partial_0^*\mu)$ over $X^{[4]}$.
\end{definition}

\begin{definition} \label{d:connex}
Let $\calG=(X, P, \mu)$ be a bundle gerbe over a manifold $M$. A \emph{connection}
on $\calG$ is a connection 1-form $\gamma \in \Omega^1(P)$ such that the bundle isomorphism $\mu$ is connection-preserving: $\mu^*(\hat{\partial}_1^* \gamma) = \hat{\partial}_2^* \gamma \otimes \hat{\partial}_0^*\gamma$, where $\hat{\partial}_j:\partial_j^*P \to P$ denotes the natural map covering $\partial_j$. A \emph{curving} for the connection $\gamma$ is a 2-form $B\in \Omega^2(X)$ satisfying  $\partial B=\curv(\gamma)$.  
We will call the pair $(B,\gamma)$ a \emph{connective structure} for $\calG$.
\end{definition}

Since $\partial dB = d \partial B = d\curv(\gamma)=0$, there exists a unique closed form $\chi \in \Omega^3(M)$, called the \emph{3-curvature} of the connective structure, satisfying $\pi^*\chi = dB$.

\begin{example} \label{eg:trivialgerbe}
The trivial bundle gerbe $\calI$ over $M$ is the bundle gerbe with $\pi=\id_M$, $P=M\times S^1$, and canonical gerbe product $\mu$.
For $\omega \in \Omega^2(M)$,  $\calI_{\omega}$, is the bundle gerbe $\calI$ with trivial connection on $M\times S^1 \to M$ and curving $\omega$. Note that the 3-curvature of this trivial connection with curving $\omega$ is $d\omega$. 
\eoe
\end{example}

\begin{example} \label{eg:cechdata}
Let $\{U_i\}$ denote a good open cover of a smooth manifold $M$, and let $X=\bigsqcup U_i \to M$ denote the map whose restrictions $U_i \hookrightarrow M$ are the inclusions. Then we may identify $X^{[2]} = \bigsqcup U_{ij}$, where $U_{ij}$ denotes the intersection $U_i\cap U_j$ (and similarly for $X^{[n]}$ and $n$-tuples of indices, e.g., $X^{[3]} = \bigsqcup U_{ijk}$).  Suppose $\{g_{ijk}:U_{ijk}\to S^1\}$ denotes a collection of smooth functions. Let  $P=\bigsqcup U_{ij} \times S^1 \to \bigsqcup U_{ij}$ and set 
\[
\mu(x,z,w) = (x,zwg_{ijk}(x)),
\]
where we are identifying $x\in M$ with its copies in the  $U_{ij}$'s. It's straightforward to see that $\mu$ defines a bundle gerbe product as in Definition \ref{d:bgerbe}, provided $g_{ijk}$ defines a \v{C}ech 2-cocycle.  Denote the resulting bundle gerbe by $\calG_{g_{ijk}}$.

A connective structure on such a gerbe is thus specified by a collection  $\{\gamma_{ij} = \pr^*A_{ij} + z^{-1}dz \}$, or more simply by the forms $A_{ij} \in \Omega^1(U_{ij})$ that must satisfy $A_{jk} - A_{ik} + A_{ij} = g_{ijk}^{-1} dg_{ijk}$ on each $U_{ijk}$, together with a collection $B_i\in \Omega^2(U_i)$ satisfying $B_j-B_i = dA_{ij}$ on each $U_{ij}$. In other words, a bundle gerbe with connective structure with respect to the covering $\{U_i\}$ is specified by a  \v{C}ech-Deligne 2-cocycle $(g,A,B)$ \cite{brylinski2007loop}.
\eoe
\end{example}

Recall that a \emph{common refinement} of two surjective submersions $\pi: X \to M$ and $\pi': X' \to M$ is a surjective submersion $\varpi:Y\to M$ together with surjective submersions $\lambda: Y \to X$ and $\rho: Y\to X'$ such that $\pi \circ \lambda = \varpi = \pi' \circ \rho$.  For example, the fibre product $Y=X\times_M X'$ is such a common refinement. 

\begin{definition}\label{d:gerbe1arrow} \cite{nikolaus2011equivariance} Let $\calG=(X,P,\mu)$ and $\calG'=(X',P',\mu')$ be bundle gerbes over a manifold $M$.
An \emph{isomorphism  $\calG \dashto \calG'$ of bundle gerbes}  consists of a common refinement $Y\to M$ of $\pi$ and $\pi'$, together with a principal  $S^1$-bundle $Q \to Y$ and an isomorphism of $S^1$-bundles $\varphi:\rho^*P' \otimes \partial_0^*Q \to \partial_1^*Q \otimes \lambda^* P$ over $Y^{[2]}$ satisfying the coherence condition
\[ 
(\id \otimes \lambda^*\mu) \circ (\partial_2^* \varphi \otimes \id) \circ (\id \otimes \partial_0^*\varphi) = \partial_1^*\varphi \circ (\rho^*\mu'\otimes \id)
\]
over $Y^{[3]}$. We will denote such an isomorphism by $(Q,\varphi)$, or sometimes simply $Q$.

An \emph{isomorphism of bundle gerbes with connective structure} $(\calG,B,\gamma) \dashto (\calG', B', \gamma')$ is an isomorphism $(Q,\varphi)$ as above, together with a connection 1-form $\gamma_Q \in \Omega^1(Q)$ such that $\curv(\gamma_Q) = \rho^*B' - \lambda^*B$ and $\varphi$ is connection-preserving. We will denote such an isomorphism by $(Q,\varphi,\gamma_Q)$, or sometimes simply $(Q,\gamma_Q)$.
\end{definition}

Given two isomorphisms $(Q,\gamma_Q), (Q',\gamma_{Q'}): (\calG,B,\gamma) \dashto (\calG', B', \gamma')$ of bundle gerbes with connective structure, there is a notion of 2-isomorphism $(Q,\gamma_Q) \Rightarrow (Q',\gamma_{Q'})$; consequently bundle gerbes over $M$ with connective structure form a bicategory $\mathsf{BunGerbe}^\nabla(M)$ . (See \cite{nikolaus2011equivariance} for details.)

Bundle gerbes  on a manifold $M$ are classified by their \emph{Dixmier-Douady} class (or DD-class) in $ H^3(M;\ZZ)$, analogous to the Chern-Weil classification of principal $S^1$-bundles by their Chern class in $H^2(M;\ZZ)$.  Moreover, the DD-class maps to the cohomology class of the 3-curvature via the coefficient homomorphism $H^3(M;\ZZ) \to H^3(M;\RR) \cong H^3_{dR}(M)$. For  bundle gerbes on $M$ with connective structure, the classification takes values in  the group of  differential characters of degree 3 (e.g., see \cite{krepski2018differential}).

Let $G$ be a compact Lie group. 
For manifolds equipped with a smooth $G$-action, there are various notions of $G$-equivariant bundle gerbes in the literature---see, for instance, \cite{meinrenken2003basic}, \cite{stienon2010equivariant}, \cite{murray2016equivariant}, \cite{nikolaus2011equivariance}. Moreover, there are $G$-equivariant analogues of the aforementioned classifications: $G$-equivariant bundle gerbes are classified by their $G$-equivariant DD-class, while $G$-equivariant bundle gerbes with connective structure are classified by $G$-equivariant differential characters.

\begin{example} \label{e:basicgerbe}
An important family of examples of bundle gerbes naturally occur on Lie groups. 
Let $G$ be a compact simple Lie group, with an invariant inner product $\langle-,-\rangle$ on its Lie algebra $\g$.  Let $\lmc$ and $\rmc$ denote the left and right invariant Maurer-Cartan forms. Let $\eta = \frac{1}{12} \langle \lmc, [\lmc, \lmc] \rangle$ denote the Cartan 3-form, and recall  that $\eta$ has an equivariant extension 
\(
\eta_G(\xi) = \eta +  \frac{1}{2} \langle \lmc + \rmc, \xi \rangle \in \Omega^3_G(G),
\)
with respect to the conjugation action.
For certain choices of inner product (parameterized by the so-called \emph{level}), there are a number of explicit constructions in the literature of a bundle gerbe with connective structure $(\calG, B, \gamma)$  over $G$ whose 3-curvature is $\eta$, as well as $G$-equivariant versions whose $G$-equivariant 3-curvature is $\eta_G$---e.g., see \cite{meinrenken2003basic}, \cite{gawedzki2004basic} and \cite{krepski2018basic}. 
\eoe
\end{example}

\subsection{Bundle gerbes as $S^1$-central extensions} \label{ss:centext}

Bundle gerbes may be recast as $S^1$-central extensions of Lie groupoids. This interpretation is briefly reviewed next. Given a bundle gerbe $\calG = (X,P,\mu)$, we consider the Lie groupoid $P \toto X$, with source and target maps, $s=\pi_0:= \partial_0 \circ \pi_P$ and $t=:\pi_1 = \partial_1 \circ \pi_P$, respectively, where $\pi_P:P\to X^{[2]}$ denotes the bundle projection. The groupoid multiplication $m:P\times_X P \to P$ (also denoted by $\cdot$) is induced by the gerbe product,
\begin{align*}
\mu(x,y,z;p\otimes q) = (x,y,z;m(p,q)),
\end{align*}
as are the unit $\epsilon:X \to P$ and inverse $i:P\to P$, by setting
\begin{align*}
\mu^{-1}(x,x,y;p) = (x,x,y; \epsilon(x)\otimes p), \quad \text{and} \quad  \mu^{-1} (x,y,x; \epsilon(x)) = (x,y,x; p\otimes i(p))
\end{align*}
for all $p\in P$ lying over $(x,y)\in X^{[2]}$.
In this viewpoint, we see that $P\toto X$ is an \emph{$S^1$-central  extension} of the submersion groupoid $X^{[2]} \toto X$. That is, the bundle projection $P\to X^{[2]}$ is a morphism of Lie groupoids,
\[
\xymatrix{
P  \ar@<-.7ex>[d]\ar@<.7ex>[d]  \ar[r] & X^{[2]}  \ar@<-.7ex>[d]\ar@<.7ex>[d] \\
X \ar@{=}[r] & X
}
\]
and the $S^1$-action on $P$ is compatible with the groupoid multiplication:
\[
(zp)\cdot (wq) = (zw)(p\cdot q)
\]
for all composable $p,q \in P$ and $z,w\in S^1$. 

Suppose  $(B,\gamma)$ defines a connective structure on $\calG$. Then $\pi_P^*\partial B = d\gamma$, where recall $\partial$ denotes the simplicial differential on $X^{[\bullet+1]}$. Letting $\delta$ denote the simplicial differential on $P_\bullet$, the simplicial manifold associated to $P\toto X$, we have $\delta B = d\gamma$.  Also, since the bundle isomorphism $\mu$ is connection-preserving, we see that $\delta \gamma = 0$. In other words, $\gamma$ is \emph{multiplicative}: $m^*\gamma = \pr_1^*\gamma+ \pr_2^*\gamma$.

\begin{remark} \label{r:differentialnotation}
(Notation) Throughout the paper, we shall use $\partial$ and $\delta$  to distinguish between the simplicial maps of $X^{[\bullet+1]}$ and $P_\bullet$, respectively, as in the preceding paragraph.
\eoe \end{remark}

\subsection{Multiplicative vector fields on bundle gerbes} \label{ss:mult}
After a brief review of multiplicative vector fields for Lie groupoids, this section studies multiplicative vector fields on bundle gerbes, establishing some basic properties used in the rest of the paper.  The examples included are meant to illustrate how these concepts are encoded for bundle gerbes given in terms of \v{C}ech data, which turn out to coincide with the notion of infinitesimal symmetries of gerbes in \cite{collier2011infinitesimal}.

Recall from \cite{mackenzie1998classical} that a \emph{multiplicative vector field} on a Lie groupoid $\mathbf{G}=\{G_1\toto G_0\}$ is a functor $\mathbf{x}:\mathbf{G} \to T\mathbf{G}$ such that $\pi_{\mathbf{G}}\circ \mathbf{x} = \id_{\mathbf{G}}$, where $\pi_{\mathbf{G}}:T\mathbf{G} \to \mathbf{G}$ denotes the tangent bundle projection. Such a functor $\mathbf{x}$  therefore consists of a pair of vector fields $(\mathbf{x}_0,\mathbf{x}_1) \in \mathfrak{X}(G_0) \times \mathfrak{X}(G_1)$ that are  compatible with units and the groupoid multiplications on $\mathbf{G}$ and $T\mathbf{G}$.  The collection of multiplicative vector fields  on $\mathbf{G}$ are the objects of a category $\mathbb{X}(\mathbf{G})$, whose morphisms are natural transformations $\eta: \mathbf{x}\Rightarrow \mathbf{x}'$ satisfying $\pi_\mathbf{G}(\eta(x))=\epsilon(x)$ for every object $x\in G_0$, where $\epsilon:G_0\to G_1$ denotes the unit map  \cite{hepworth2009vector}. 

Let us now specialize to the Lie groupoid $\mathbf{P}=\{P\toto X\}$ associated to a bundle gerbe $(X,P,\mu)$.
Consider a multiplicative vector field $({\mathbf{x}},{\mathbf{p}})$ on $\mathbf{P}$. Since $({\mathbf{x}},{\mathbf{p}})$ defines a functor, 
we have that ${\mathbf{p}} \sim_{\pi_j} {\mathbf{x}}$ for $j=0,1$, and hence $d\pi({\mathbf{x}}(x)) = d\pi({\mathbf{x}}(y))$ for all $(x,y) \in X^{[2]}$. Hence ${\mathbf{x}}$ descends to a vector field $\xi$ on $M$.

Moreover, $({\mathbf{x}},{\mathbf{x}})$ defines a vector field on $X^{[2]}$, and similarly on $X^{[n]}$ for any $n>1$; denote such vector fields by ${\mathbf{x}}^{[n]}$. In particular, we have that ${\mathbf{p}} \sim_{\pi_P} {\mathbf{x}}^{[2]}$. Similarly, there is a naturally defined vector field $({\mathbf{p}},{\mathbf{p}})$ on $P\times_X P$ (denoted by ${\mathbf{p}}\ast {\mathbf{p}}$ in \cite{mackenzie1998classical}) such that $({\mathbf{p}},{\mathbf{p}}) \sim_m {\mathbf{p}}$ and hence $({\mathbf{p}},{\mathbf{p}}) \sim_{\delta_j} {\mathbf{p}}$ for $j=0,1,2$.

\begin{proposition} \label{p:multvf}
Let $\calG=(X,P,\mu)$ be a bundle gerbe, and let $({\mathbf{x}},{\mathbf{p}})$ be a multiplicative vector field on  the corresponding Lie groupoid $P\toto X$.
\begin{enumerate}
	\item The vector field ${\mathbf{p}}$ is $S^1$-invariant.
	\item ${\mathbf{p}} \sim_{\pi_j} {\mathbf{x}}$ for $j=0,1$.
%	\item $d\pi({\mathbf{x}}(x)) = d\pi({\mathbf{x}}(y))$ for all $(x,y) \in X^{[2]}$, and hence ${\mathbf{x}}$ induces vector field ${\mathbf{x}}^{[n]}$ on $X^{[n]}$ for any $n>1$.
	\item ${\mathbf{p}}_2\otimes {\mathbf{p}}_0 \sim_\mu {\mathbf{p}}_1$, where ${\mathbf{p}}_j$ denotes the vector field on $\partial_j^*P$ induced by $({\mathbf{x}}^{[3]},{\mathbf{p}})$. 
\end{enumerate}
Moreover, any pair of vector fields $({\mathbf{x}},{\mathbf{p}}) \in \mathfrak{X}(X)\times \mathfrak{X}(P)$ satisfying the above conditions is a multiplicative vector field on $P\toto X$.
\end{proposition}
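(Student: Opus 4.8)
The plan is to unwind the definition of a multiplicative vector field as a functor $\xi\colon \mathbf{P}\to T\mathbf{P}$ and match each piece of functoriality with one of the three stated conditions, using the explicit description of the groupoid structure on $P\toto X$ in terms of the gerbe data that was set up in Section~\ref{ss:centext}. First I would recall that a functor $\xi = (\widetilde\xi,\check\xi)$ covering the identity is the same as a pair of vector fields that is (i) compatible with source and target, i.e. $\check\xi\sim_{\pi_0}\widetilde\xi$ and $\check\xi\sim_{\pi_1}\widetilde\xi$ (this is exactly condition (2)), (ii) compatible with units, i.e. $\epsilon_*\widetilde\xi = \check\xi\circ\epsilon$, and (iii) compatible with multiplication, i.e. the pair $(\check\xi,\check\xi)$ on $P\times_X P$ is $m$-related to $\check\xi$. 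The remaining task is to show that, for the specific groupoid $P\toto X$, conditions (1) and (3) together with (2) are equivalent to this package (with the unit compatibility coming for free once the others hold, as is standard for $\mathcal G$-central extensions, or folded into condition~(3) via the inversion formulas recorded in Section~\ref{ss:centext}).

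The key computational steps I would carry out are the following. For the forward direction, $S^1$-invariance of $\check\xi$ (condition (1)) follows because the $S^1$-action on $P$ is realized through the groupoid structure: for $z\in S^1$ fixed, $p\mapsto zp$ is $m(\,\cdot\,,\epsilon_{S^1}(z))$-type multiplication (more precisely, multiplication by the image of $z$ under $S^1\hookrightarrow P$ over the appropriate diagonal), so multiplicativity of $\check\xi$ forces $\check\xi$ to be invariant under it; alternatively, differentiate the central-extension compatibility $(zp)\cdot(wq)=(zw)(p\cdot q)$. Condition~(3) is multiplicativity rewritten through the gerbe product: since $\mu$ intertwines $m$ with $\otimes$, the statement $(\check\xi,\check\xi)\sim_m\check\xi$ over $P\times_X P$ transports under $\mu$ to $\check\xi_2\otimes\check\xi_0\sim_\mu\check\xi_1$ over $X^{[3]}$, where the induced vector fields $\check\xi_j$ on $\partial_j^*P$ are precisely the ones furnished by $(\widetilde\xi^{[3]},\check\xi)$ (this uses that $\check\xi\sim_{\pi_P}\widetilde\xi^{[2]}$, which is itself a consequence of (2)). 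For the converse, I would assume (1)–(3) and reconstruct the functor: (2) gives source/target compatibility; (3) read backwards through $\mu$ gives multiplication compatibility on $P\times_X P$; and unit compatibility is recovered from the formula $\mu^{-1}(x,x,y;p)=(x,x,y;\epsilon(x)\otimes p)$ by differentiating along $\widetilde\xi^{[n]}$, or simply noted to be automatic. Throughout, one checks that the various induced vector fields $\widetilde\xi^{[n]}$, $\check\xi\ast\check\xi$, $\check\xi_j$ are well defined — but this was already established in the discussion preceding the proposition, so I would cite it rather than redo it.

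The main obstacle I anticipate is purely bookkeeping: keeping straight the several pullback bundles $\partial_j^*P\to X^{[3]}$ and $\partial_j^*P\to X^{[2]}$, the natural covering maps $\widehat\partial_j$, and verifying that the vector field "$\check\xi_2\otimes\check\xi_0$" on $\partial_2^*P\otimes\partial_0^*P$ induced by $(\widetilde\xi^{[3]},\check\xi)$ agrees under the canonical identifications with the pushforward of $(\check\xi,\check\xi)$ on $P\times_X P$ along the isomorphism implementing $\mu$. Once the notation is pinned down, each implication is a one-line $\mu$-relatedness argument. I would also make explicit that the reformulation is valid because relatedness of vector fields is preserved by fibre products and pullbacks, so "$\xi$ is a functor to $T\mathbf P$" decomposes exactly into the relatedness conditions listed — no extra smoothness or integrability input is needed.
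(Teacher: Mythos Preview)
Your treatment of (2), (3), and the converse is essentially the same as the paper's: both amount to unwinding the functoriality conditions and matching them against $\mu$-relatedness, with unit compatibility recovered from the remaining conditions. The paper's phrasing is terser but the content is identical.

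For (1) your approach is genuinely different from the paper's, and as written it has a gap. The paper does \emph{not} argue directly from the groupoid structure; instead it chooses an auxiliary multiplicative connection $\gamma$, uses properness of $P\toto X$ to write $\iota_{\check\xi}\gamma=\delta g$ for some $g\in C^\infty(X)$, and then computes $\iota_{[\partial/\partial\theta,\check\xi]}\gamma = L_{\partial/\partial\theta}\delta g = 0$. Your route---realising $p\mapsto zp$ as $m(z\epsilon(t(p)),p)$ and invoking multiplicativity---is circular as stated: to push $\check\xi$ through $\phi_z$ via $dm$ you must know $\check\xi$ at the point $z\epsilon(t(p))$, which is precisely $S^1$-invariance of $\check\xi$ along the diagonal bundle $P_\Delta=\pi_P^{-1}(\Delta_X)$. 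That is not given for free, and ``differentiating the central-extension identity $(zp)\cdot(wq)=(zw)(p\cdot q)$'' yields information about $\partial/\partial\theta$, not about $\check\xi$.

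The gap is, however, easily filled, and the resulting argument is arguably more elementary than the paper's. Restrict $\check\xi$ to $P_\Delta\cong X\times S^1$ (the isomorphism being $(x,z)\mapsto z\epsilon(x)$); multiplicativity on this group bundle forces the vertical component $f(x,\cdot)$ to be a smooth homomorphism $S^1\to\RR$, hence zero, so $\check\xi\vert_{P_\Delta}$ is $S^1$-invariant. With this in hand your computation goes through: $d\psi_z(\check\xi(p)) = d\phi_z(\check\xi(\epsilon(t(p)))) = \check\xi(z\epsilon(t(p)))$, whence $d\phi_z(\check\xi(p)) = dm(\check\xi(z\epsilon(t(p))),\check\xi(p)) = \check\xi(zp)$ by multiplicativity. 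So your strategy succeeds once you insert this one intermediate step; it avoids both the choice of connection and the appeal to properness that the paper uses.
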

\begin{proof}
To show (1), we verify that $[\genz,{\mathbf{p}}]=0$. Choose a connection $\gamma$ as in Definition \ref{d:connex}. Since $[\genz,{\mathbf{p}}]$ is vertical, it suffices to show $\iota_{[\genz,{\mathbf{p}}]} \gamma =0$.
Note that since $\delta \gamma = 0$, it follows that $\delta \iota_{{\mathbf{p}}} \gamma =0$ and since $\mathbf{P}$ is proper there exists $g:X\to \RR$ such that $\delta g= \iota_{{\mathbf{p}}}\gamma$, which is $S^1$-invariant. Hence $\iota_{[\genz,{\mathbf{p}}]} \gamma  = L_{\genz} \iota_{{\mathbf{p}}}\gamma - \iota_{{\mathbf{p}}}L_{\genz} \gamma = L_{\genz} \delta g =0$, as required.

Statement (2) has already been established, 
and (3) is a  consequence of (1) and the discussion preceding the proposition.
To verify the last statement, observe that conditions (2) and (3) show that $({\mathbf{x}},{\mathbf{p}})$ defines a functor, provided it respects the unit maps. It thus suffices to check that $d\epsilon ({\mathbf{x}})(x) = {\mathbf{p}}(\epsilon(x))$, which follows immediately from (2) and the observation that $\epsilon$ is a section of $\pi_j$ (for $j=0$ or $1$).
\end{proof}

\begin{example} \label{eg:collier:symmetry}
Consider a multiplicative vector field $({\mathbf{x}},{\mathbf{p}})$ on a  bundle gerbe $P$ given in terms of \v{C}ech data, as in Example \ref{eg:cechdata}.  In this case, we have that ${\mathbf{x}}$ is specified by a collection of local vector fields $\xi_i$ on $U_i$ and by the paragraph preceding Proposition \ref{p:multvf}, $\xi_i\big|_{U_{ij}} = \xi_j\big|_{U_{ij}}$ for all $i,j$ and thus the local vector fields are restrictions of a vector field $\xi$ on $M$.  Denote the restriction of $\xi$ to $U_{i_1,i_2,\ldots}$ by $\xi_{i_1,i_2,\ldots}$.

The vector field ${\mathbf{p}}$ must take the form $(\xi_{ij}(x),f_{ij}(x,z))$, where  $f_{ij}(x,z) \in \RR$ is the component in the $S^1$ direction (where we implicitly trivialize the tangent bundle of $S^1$). By Proposition \ref{p:multvf} (1), the functions $f_{ij}$ are independent of $z\in S^1$; hence we abuse notation and write $f_{ij}:U_{ij} \to \RR$.
The composition
\[
\xymatrix@C=5em{
\xi_i(x) & \ar@/_1.5pc/[l]_-{(\xi_{ij},f_{ij}(x))} \xi_j(x) &  \ar@/_1.5pc/[l]_-{(\xi_{jk},f_{jk}(x))} \xi_k(x)
}
\]
in the tangent groupoid $TP\toto TX$ is  $(\xi_{jk},f_{ik}(x))$, as ${\mathbf{p}}$ is multiplicative. By definition, this composition is obtained by applying the tangent map of the gerbe multiplication.  Therefore,
\begin{equation} \label{eq:collier_objects}
f_{ik} = g_{ijk}^{-1} dg_{ijk}(\xi) +  f_{ij} +  f_{jk},
\end{equation}
on $U_{ijk}$. 

In \cite{collier2011infinitesimal}, Collier studies the infinitesimal symmetries of an $S^1$-gerbe considered as a stack over $M$.  In this context, the author defines a category $\mathcal{L}_{g_{ijk}}$ of \emph{infinitesimal symmetries} of the gerbe with \v{C}ech cocyle $g_{ijk}$. The objects of this category are given by pairs $(\xi,\{-f_{ij}\})$ consisting of a vector field $\xi$ on $M$ and a collection of functions $f_{ij}$ satisfying \eqref{eq:collier_objects}.
\eoe
\end{example}

In \cite{berwick2016lie}, the authors show that the category $\mathbb{X}(\mathbf{G})$ of multiplicative vector fields on a Lie groupoid $\mathbf{G}$ is a category internal to the category of vector spaces (i.e., a 2-vector space in the language of \cite{baez2004higher}). In fact, they show it is a Lie 2-algebra---we shall return to this in Section \ref{s:lie2}.  Though we are mainly interested in the case of Lie groupoids $P\toto X$ arising from bundle gerbes, there is no added difficulty in describing the morphisms in the general case, as we recall next, following \cite{berwick2016lie}.

For a general Lie groupoid $\mathbf{G}$, morphisms of multiplicative vector fields are given by sections of the Lie algebroid $A\to G_0$ of $\mathbf{G}$. (We shall implicitly make use of the equivalence of categories between 2-term chain complexes and 2-vector spaces---see \cite[Remark 2.9]{berwick2016lie} and \cite{baez2004higher}.)  To review some of the details, recall that the Lie algebroid $A=\ker ds \big|_{G_0}$, with anchor $dt:A\to TG_0$.  

A section $a\in \Gamma(A)$ gives rise to a morphism of multiplicative vector fields as follows. Let ${\mathbf{a}} = dt(a)$ and ${\bar{\mathbf{a}}} = \overrightarrow{a} + \overleftarrow{a}$, where 
\[
\overrightarrow{a}(g)=dR_g(a(t(g))) \quad \text{and} \quad \overleftarrow{a}(g) = d(L_g \circ i)(a(s(g))). 
\]
Here, $L_g$ and $R_g$ denote left and right multiplication, respectively,  by $g\in G_1$, and $i:G_1\to G_1$ denotes inversion.  It follows that $({\mathbf{a}},{\bar{\mathbf{a}}})$ is a multiplicative vector field \cite[Example 3.4]{mackenzie1998classical}, and we may view $a$ as a morphism  $a:({\mathbf{x}},{\mathbf{p}}) \to ({\mathbf{x}'},{\mathbf{p}'})$  whenever $({\mathbf{x}'},{\mathbf{p}'})-({\mathbf{x}},{\mathbf{p}}) =({\mathbf{a}},{\bar{\mathbf{a}}})$, with addition as composition.

Below we record some additional facts about multiplicative vector bundles for the case of bundle gerbes that we will use in Section \ref{s:lie2}.  In particular, we see from Proposition \ref{p:properties} (2) that the multiplicative vector fields 
isomorphic to the zero vector
are lifts of the zero vector on $M$.

\begin{lemma} \label{l:v_a}
Let $(X,P,\mu)$ be a bundle gerbe over $M$ with connection $(B,\gamma)$.
Let $a\in \Gamma(A_P)$, where $A_P \to X$ denotes the Lie algebroid of the corresponding Lie groupoid $P\toto X$. Set $\mathsf{v}_a = \epsilon^*\iota_{\overrightarrow{a}} \gamma$. Then $\iota_{{\bar{\mathbf{a}}}} \gamma =- \delta \mathsf{v}_a$.
\end{lemma}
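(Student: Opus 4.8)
The plan is to compute $\iota_{\check a}\gamma$ directly from the decomposition $\check a = \overrightarrow a + \overleftarrow a$ and to recognize the resulting two terms as the simplicial faces $\delta_0^*$ and $\delta_1^*$ of the function $\mathsf v_a = \epsilon^*\iota_{\overrightarrow a}\gamma$ on $X$, so that $\iota_{\check a}\gamma = -(\delta_0^* - \delta_1^* + \delta_2^*)\mathsf v_a = -\delta\mathsf v_a$ once we check that the $\delta_2^*$ contribution vanishes. First I would record what happens on units: along $\epsilon(X)\subset P$ we have, essentially by definition, $(\epsilon^*\iota_{\check a}\gamma)(x) = \iota_{\overrightarrow a}\gamma\,(\epsilon(x)) + \iota_{\overleftarrow a}\gamma\,(\epsilon(x))$. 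Since $\overleftarrow a(g) = d(L_g\circ i)(a(s(g)))$ and $\overrightarrow a(g) = dR_g(a(t(g)))$, at a unit $g = \epsilon(x)$ both $L_{\epsilon(x)}$ and $R_{\epsilon(x)}$ act as the identity, while $i$ fixes units, so $\overleftarrow a(\epsilon(x))$ and $\overrightarrow a(\epsilon(x))$ differ only by the sign coming from $di$ on $A_P$; one gets $\epsilon^*\iota_{\overleftarrow a}\gamma = -\epsilon^*\iota_{\overrightarrow a}\gamma = -\mathsf v_a$, hence $\epsilon^*\iota_{\check a}\gamma = 0$. This says precisely that $\iota_{\check a}\gamma$ lies in the kernel of $\epsilon^*$, which by exactness of the complex $\Omega^0(X)\xrightarrow{\delta}\Omega^0(P)\xrightarrow{\delta}\cdots$ (properness, as in the displayed exact sequence from \cite{crainic2003differentiable}) is the image of $\delta:\Omega^0(X)\to\Omega^0(P)$ — but I want the explicit primitive, so I will not just invoke exactness.

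Next I would use multiplicativity of $\gamma$, i.e. $\delta\gamma = 0$, equivalently $m^*\gamma = \mathrm{pr}_1^*\gamma + \mathrm{pr}_2^*\gamma$ on $P\times_X P$. Evaluating on the right-invariant piece: for $p\in P$ over $(x,y)\in X^{[2]}$, the curve $t\mapsto p\cdot\epsilon_t$ where $\epsilon_t$ is the flow of $a$ through $\epsilon(y)$ realizes $\overrightarrow a(p)$ as $dm$ applied to $(0_p, \overrightarrow a\,\text{-piece at }\epsilon(y))$, and multiplicativity of $\gamma$ gives $\iota_{\overrightarrow a}\gamma\,(p) = \iota_{\overrightarrow a}\gamma\,(\epsilon(y)) = \mathsf v_a(y)= (\delta_0^*\mathsf v_a)(p)$ after identifying $y = \delta_0(p)$ under the source map (recall $s = \partial_0\circ\pi_P$, $t = \partial_1\circ\pi_P$). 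Symmetrically, the left-invariant piece $\overleftarrow a(p) = d(L_p\circ i)(a(x))$ is $dm$ applied to the pair $(\,i\text{-image of }a\text{ at }\epsilon(x),\,0_p)$ composed appropriately; using $m^*\gamma = \mathrm{pr}_1^*\gamma+\mathrm{pr}_2^*\gamma$ and then the behaviour of $\gamma$ under $i$ (which, again from $\delta\gamma=0$ restricted to $(p, i(p))$ composing to a unit, gives $i^*\gamma = -\gamma$ up to the unit correction) yields $\iota_{\overleftarrow a}\gamma\,(p) = -\iota_{\overrightarrow a}\gamma\,(\epsilon(x)) = -\mathsf v_a(x) = -(\delta_1^*\mathsf v_a)(p)$, with $x = \delta_1(p)$ under the target map. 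Adding, $\iota_{\check a}\gamma = \delta_0^*\mathsf v_a - \delta_1^*\mathsf v_a$, and since $\mathsf v_a$ is a function on $X = P_0$ its third face $\delta_2^*$ does not appear, so this equals $-\delta\mathsf v_a = -(\,-\delta_0^* + \delta_1^* - \cdots)$ — I must be careful with the sign convention $\delta = \sum(-1)^j\delta_j^*$, under which $\delta\mathsf v_a = \delta_0^*\mathsf v_a - \delta_1^*\mathsf v_a$ on $P_0$, giving directly $\iota_{\check a}\gamma = \delta\mathsf v_a$; if instead the convention in use makes $\delta$ on $\Omega^0(X)\to\Omega^0(P)$ equal $\delta_1^* - \delta_0^* = t^* - s^*$ (which is what matches the statement's minus sign), then $\iota_{\check a}\gamma = -\delta\mathsf v_a$ as claimed.

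The main obstacle I anticipate is purely the bookkeeping of signs and of which face map is which: the interplay between the inversion $i$ on $P\toto X$, the sign in $di$ on the Lie algebroid, the sign convention $\delta = \sum(-1)^j\partial_j^*$, and the identifications $s = \partial_0\circ\pi_P$, $t = \partial_1\circ\pi_P$ all conspire, and getting the final overall sign to match $-\delta\mathsf v_a$ exactly (rather than $+\delta\mathsf v_a$) requires care. Everything else — the evaluation of $\gamma$ on right- and left-invariant vector fields via multiplicativity, and the unit computation — is short. A clean way to organize it is: (i) show $\epsilon^*\iota_{\check a}\gamma = 0$; (ii) show $\iota_{\overrightarrow a}\gamma$ is $\pi_0$-basic, equal to $\pi_0^*\mathsf v_a$; (iii) show $\iota_{\overleftarrow a}\gamma$ is $\pi_1$-basic, equal to $-\pi_1^*\mathsf v_a$, using $i^*\gamma = -\gamma$ on vertical-for-the-algebroid directions; (iv) sum and read off $\iota_{\check a}\gamma = -(\pi_1^*\mathsf v_a - \pi_0^*\mathsf v_a) = -\delta\mathsf v_a$.
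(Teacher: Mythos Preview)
Your approach is essentially the paper's: compute $\iota_{\overrightarrow a}\gamma$ and $\iota_{\overleftarrow a}\gamma$ separately using multiplicativity of $\gamma$, then add. The paper does exactly your steps (ii)--(iv) (your step (i) is unnecessary and the paper omits it). However, you have swapped source and target in your realization of the invariant vector fields, and this produces the wrong sign.

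Concretely: by definition $\overrightarrow a(p) = dR_p\bigl(a(t(p))\bigr)$, so it depends on $a$ at the \emph{target}. Writing $R_p(q)=m(q,p)$ and using $m^*\gamma = \mathrm{pr}_1^*\gamma + \mathrm{pr}_2^*\gamma$, one gets
\[
\gamma_p\bigl(\overrightarrow a(p)\bigr)=\gamma_p\bigl(dm(a(t(p)),0_p)\bigr)=\gamma_{\epsilon(t(p))}\bigl(a(t(p))\bigr)=\mathsf v_a(t(p)),
\]
i.e.\ $\iota_{\overrightarrow a}\gamma = \pi_1^*\mathsf v_a$, not $\pi_0^*\mathsf v_a$ as you wrote. (Your ``curve $p\cdot\epsilon_t$ through $\epsilon(y)$'' is left translation at the source and realizes a \emph{left}-invariant vector, not $\overrightarrow a$.) Symmetrically, $\iota_{\overleftarrow a}\gamma = -\pi_0^*\mathsf v_a$. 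With the paper's convention $\delta = \delta_0^* - \delta_1^* = s^* - t^*$, the sum is $t^*\mathsf v_a - s^*\mathsf v_a = -\delta\mathsf v_a$, as claimed. Your version would yield $+\delta\mathsf v_a$; the two sign flips you worried about do not cancel here.
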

\begin{proof}
The proof is a direct computation that uses the multiplicativity of the connection form $\gamma$. Indeed, for $p\in P$, we have
\begin{align*}
(\iota_{\overrightarrow{a}} \gamma)(p) &= \gamma_p dR_p (a(t(p))) \\
	&=  \gamma_{\epsilon(t(p))p} dR_p (a( t(p))) \\
	&= \gamma_{\epsilon(t(p))} (a( t(p))) \\
	&= \mathsf{v}_a(( t(p))).
\end{align*}
Similarly, $(\iota_{\overleftarrow{a}} \gamma)(p) = -  \mathsf{v}_a(s(p))$, as required.
\end{proof}

\begin{proposition} \label{p:properties}
Let $(X,P,\mu)$ be a bundle gerbe over $M$ with connection $(B,\gamma)$, and
% Let $({\mathbf{x}},{\mathbf{p}})$ be a multiplicative vector field on $P \toto X$ and
  let $A_P$ denote the Lie algebroid of $\mathbf{P}$.
\begin{enumerate}
\item For any $g:X\to \RR$, $a=g\genz\big|_X$ defines a section of $A_P$ with ${\bar{\mathbf{a}}} = (\delta g) \genz$.
\item Let $({\mathbf{x}},{\mathbf{p}})$ be a multiplicative vector field. If ${\mathbf{x}} \sim_{\pi} 0$, then there exists a section $a\in \Gamma(A_P)$ with $({\mathbf{x}},{\mathbf{p}})=({\mathbf{a}},{\bar{\mathbf{a}}})$.
\end{enumerate}
\end{proposition}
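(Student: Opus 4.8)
The plan is to prove the two parts essentially by unwinding the definitions of $\overrightarrow{a}$ and $\overleftarrow{a}$ in the special case of the Lie groupoid $P\toto X$, using the concrete description of the multiplication, units and inverses coming from the gerbe product $\mu$ (as recalled in Section \ref{ss:centext}).

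For part (1), I would first check that $a = g\,\genz\big|_X$ genuinely lies in $\Gamma(A_P)$, i.e. that $g\,\genz$ is tangent to the unit submanifold $\epsilon(X)\subset P$ and annihilated by $ds$: since $\partial_0=s$ restricted to $\epsilon(X)$ is $\pi_P$ followed by a projection that kills the $S^1$-direction, $\genz$ is $ds$-vertical along $\epsilon(X)$, so $a$ is a section of $A_P = \ker ds\big|_X$. Next I would compute $\overrightarrow{a}$ and $\overleftarrow{a}$ directly. Because the $S^1$-action is \emph{central} for the groupoid multiplication, $(zp)\cdot q = z(p\cdot q)$, so right translation $R_p$ carries the $S^1$-orbit direction at $\epsilon(t(p))$ to the $S^1$-orbit direction at $p$; hence $\overrightarrow{a}(p) = g(t(p))\,\genz(p)$. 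Similarly, left translation composed with inversion carries the fibre direction to $\genz$ with a sign, giving $\overleftarrow{a}(p) = -g(s(p))\,\genz(p)$. Adding, $\check{a}(p) = \big(g(t(p)) - g(s(p))\big)\genz(p) = (\delta g)(p)\,\genz(p)$, which is the claim. (Here $\delta g = \partial_1^*g - \partial_0^*g = t^*g - s^*g$ is exactly the simplicial differential on $P_\bullet$ in degree $0$.)

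For part (2), suppose $\widetilde{\xi}\sim_\pi 0$, so $\widetilde{\xi}$ is $\pi$-vertical. By Proposition \ref{p:multvf}(2), $\check{\xi}$ is then $\pi_j$-vertical for $j=0,1$; combined with $S^1$-invariance (Proposition \ref{p:multvf}(1)) and the fact that the relevant complexes are exact, I want to produce a function $g:X\to\RR$ so that $a = g\,\genz\big|_X$ does the job. The key observation is: since $\widetilde\xi$ projects to $0$ on $M$, the induced vector field on $X^{[2]}$ vanishes, and $\check\xi\sim_{\pi_P}\widetilde\xi^{[2]}=0$ shows $\check\xi$ is vertical for the bundle $P\to X^{[2]}$, i.e. $\check\xi = h\,\genz$ for some $h:P\to\RR$; by $S^1$-invariance $h$ descends to $h:X^{[2]}\to\RR$. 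Multiplicativity of $\check\xi$ (the functoriality computed as in Example \ref{eg:collier:symmetry}, or directly $(\check\xi,\check\xi)\sim_m\check\xi$) forces $h$ to be a simplicial cocycle: $\delta h = 0$ on $P\times_X P$, i.e. $h(p\cdot q) = h(p)+h(q)$, equivalently $\partial h = 0$ in $\Omega^0(X^{[\bullet+1]})$. Since $\mathbf{P}$ (equivalently the submersion groupoid $X^{[2]}\toto X$) is proper, the complex $\Omega^0(X)\to\Omega^0(X^{[2]})\to\cdots$ is exact (as recalled after \eqref{eq:deltaexact}), so there is $g:X\to\RR$ with $\delta g = h$ on $X^{[2]}$. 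Then by part (1), the section $a = g\,\genz\big|_X$ satisfies $\check a = (\delta g)\genz = h\genz = \check\xi$ and $\widetilde a = dt(a) = 0 = \widetilde\xi$ (the anchor $dt$ kills the vertical direction), so $(\widetilde\xi,\check\xi) = (\widetilde a,\check a)$, as required.

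The main obstacle I anticipate is part (2): one has to be careful that $\widetilde\xi\sim_\pi 0$ really does force $\check\xi$ to be \emph{fibrewise} vertical for $P\to X^{[2]}$ (not merely vertical for $P\to X$ along source or target), and that the resulting fibre-component function $h$ is genuinely a degree-$0$ simplicial cocycle so that the exactness of the proper-groupoid complex applies; this is where the functoriality of the multiplicative vector field, as spelled out in Example \ref{eg:collier:symmetry}, is used in an essential way. Part (1) is a routine translation-of-definitions computation.
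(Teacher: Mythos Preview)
Your argument for part (1) is fine and matches the paper, which simply declares it ``straightforward.''

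Part (2), however, contains a genuine error. You write: ``since $\widetilde\xi$ projects to $0$ on $M$, the induced vector field on $X^{[2]}$ vanishes, and $\check\xi\sim_{\pi_P}\widetilde\xi^{[2]}=0$ shows $\check\xi$ is vertical for the bundle $P\to X^{[2]}$.'' This is not correct: the hypothesis $\widetilde\xi \sim_\pi 0$ means only that $\widetilde\xi$ is tangent to the fibres of $\pi:X\to M$, not that $\widetilde\xi$ itself vanishes. Consequently $\widetilde\xi^{[2]}=(\widetilde\xi,\widetilde\xi)$ is a perfectly nonzero vector field on $X^{[2]}$ in general, $\check\xi$ is \emph{not} vertical for $\pi_P:P\to X^{[2]}$, and your decomposition $\check\xi = h\,\genz$ collapses. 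Likewise your final line $\widetilde a = dt(a) = 0 = \widetilde\xi$ cannot hold: you need a section with $\widetilde a = \widetilde\xi$, which is typically nonzero.

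The paper's approach instead exploits the connection. It first reduces (using part (1), exactly as you suggest for the vertical component) to the case that $\check\xi$ is \emph{horizontal}. Then, because $\widetilde\xi$ is $\pi$-vertical, the pair $(\widetilde\xi,0)$ is a well-defined vector field on $X^{[2]}=X\times_M X$; the paper sets $a = \mathrm{Lift}_\gamma(\widetilde\xi,0)\big|_X$. This lies in $\ker ds\big|_X = A_P$ since its $\partial_0$-component is $0$, and $\widetilde a = dt(a) = \widetilde\xi$. One then checks via Lemma \ref{l:v_a} that $\check a$ is horizontal, and that $\check a \sim_{\pi_P} (\widetilde\xi,\widetilde\xi)$; uniqueness of horizontal lifts gives $\check a = \check\xi$. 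So the essential missing idea in your attempt is to use $\gamma$ to produce a section whose anchor is the (nonzero) $\widetilde\xi$, rather than trying to force everything into the purely vertical situation handled by part (1).
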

\begin{proof}  
The first item is straightforward. To verify (2), it suffices to consider the case that ${\mathbf{p}}$ is horizontal, which is done next. Let  $a= \mathrm{Lift}_\gamma ({\mathbf{x}},0)\big|_X$ denote the restriction of the horizontal lift of $({\mathbf{x}},0)$ on $X^{[2]}$ to $P$. Since ${\mathbf{a}}={\mathbf{x}}$, it remains to check ${\bar{\mathbf{a}}} = {\mathbf{p}}$. By Lemma \ref{l:v_a}, ${\bar{\mathbf{a}}}$ is horizontal, and a direct calculation shows that ${\bar{\mathbf{a}}} \sim_{\pi_P} ({\mathbf{x}},{\mathbf{x}})$. Therefore, ${\bar{\mathbf{a}}}={\mathbf{p}}$, as required.
\end{proof}

\begin{example} \label{eg:collier:morphisms}
Consider, as in Examples \ref{eg:cechdata} and \ref{eg:collier:symmetry} , a bundle gerbe $P$ given by \v{C}ech data. 
The Lie algebroid $A_P$ in this case consists of vectors in $\bigsqcup TU_{i} \times TS^1$ of the form $(0,u)$. Therefore, we may identify a section $a\in \Gamma(A_P)$ with a collection of functions $a=\{u_i: U_i \to \RR\}$ that record the vertical components. Hence ${\mathbf{a}}=0$, and after identifying vertical vectors with functions, ${\bar{\mathbf{a}}} = \{ u_j-u_i\}$.  These are the morphisms of the category $\mathcal{L}_{g_{ijk}}$ in \cite{collier2011infinitesimal}. 
\eoe
\end{example}

\subsection{Multiplicative vector fields on bundle gerbes with connective structure} \label{ss:connex_multvf}

This section studies the interaction of multiplicative vector fields on bundle gerbes  with a connective structure. The section begins with Proposition \ref{p:curv_measures_bracket_preserve_failure}, which shows that the connective structure measures the failure of the horizontal lift to preserve brackets, and proceeds to introduce the category of connection-preserving multiplicative vector fields on bundle gerbes in Definition \ref{d:connexpresmultivf}. We continue the examples of the previous section to concretely illustrate the case of bundle gerbes given in terms of \v{C}ech data, recovering the connection-preserving infinitesimal symmetries of \cite{collier2011infinitesimal}.

Let $\calG=(X,P,\mu)$ be a bundle gerbe over $M$ with connective structure $(B,\gamma)$. In this section, we introduce the category of multiplicative vector fields on the corresponding groupoid $P\toto X$ that preserve the connective structure.

We begin with the following Proposition, which gives an interpretation of what the connective structure $(B,\gamma)$ on $\calG$ measures in terms of multiplicative vector fields, analogous to the interpretation given in \cite[Section 9]{collier2011infinitesimal}.

\begin{proposition} \label{p:curv_measures_bracket_preserve_failure}
Let $\calG=(X,P,\mu)$ be a bundle gerbe over $M$ with connective structure $(B,\gamma)$.  Let ${\mathbf{x}}$ and ${\mathbf{z}}$ be vector fields on $X$ that are lifts of vector fields $\xi$ and $\zeta$ on $M$.
\begin{enumerate}
	\item 
	Let  $\mathrm{Lift}_\gamma {\mathbf{x}}^{[2]}$ be the horizontal lift of ${\mathbf{x}}^{[2]}$ to $P$ with respect to $\gamma$. Then $({\mathbf{x}},\mathrm{Lift}_\gamma {\mathbf{x}}^{[2]})$ is a multiplicative vector field.
	\item 
	Let $g=\iota_{{\mathbf{x}}}\iota_{{\mathbf{z}}} B$ and $a = g\genz \big|_X \in \Gamma(A_P)$. Then $a$ defines an isomorphism 
	\[
	([{\mathbf{x}},{\mathbf{z}}],[\mathrm{Lift}_\gamma {\mathbf{x}}^{[2]},\mathrm{Lift}_\gamma {\mathbf{z}}^{[2]}])
	\longrightarrow ([{\mathbf{x}},{\mathbf{z}}],\mathrm{Lift}_\gamma [{\mathbf{x}},{\mathbf{z}}]^{[2]})
	\] 
	of multiplicative vector fields.
\end{enumerate}
\end{proposition}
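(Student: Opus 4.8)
The plan is to verify both statements directly against the concrete descriptions of the objects and morphisms of $\mathbb{X}(P\toto X)$ established in Proposition~\ref{p:multvf} and Section~\ref{ss:mult}. For part (1) I would check the three conditions of Proposition~\ref{p:multvf} for the pair $(\widetilde{\xi},\check{\xi})$ with $\check{\xi}:=\mathrm{Lift}_\gamma \widetilde{\xi}^{[2]}$. Condition (1), that $\check{\xi}$ be $S^1$-invariant, holds because the horizontal distribution of the principal connection $\gamma$ is $S^1$-invariant, hence so is any horizontal lift. Condition (2) is immediate from the definition of the horizontal lift: $\check{\xi}\sim_{\pi_P}\widetilde{\xi}^{[2]}$, while $\widetilde{\xi}^{[2]}=(\widetilde{\xi},\widetilde{\xi})\sim_{\partial_j}\widetilde{\xi}$ and $\pi_j=\partial_j\circ\pi_P$, so $\check{\xi}\sim_{\pi_j}\widetilde{\xi}$. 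The only real content is condition (3): the vector field $\check{\xi}_j$ on $\partial_j^*P$ induced by $(\widetilde{\xi}^{[3]},\check{\xi})$ satisfies $\iota_{\check{\xi}_j}(\hat{\partial}_j^*\gamma)=\hat{\partial}_j^*(\iota_{\check{\xi}}\gamma)=0$, so $\check{\xi}_j$ is the $\hat{\partial}_j^*\gamma$-horizontal lift of $\widetilde{\xi}^{[3]}$, and consequently $\check{\xi}_2\otimes\check{\xi}_0$ is horizontal for the tensor-product connection $\hat{\partial}_2^*\gamma\otimes\hat{\partial}_0^*\gamma$. Since $\mu$ is connection-preserving it carries that connection to $\hat{\partial}_1^*\gamma$, hence sends horizontal vectors to horizontal vectors; and since $\mu$ covers $\id_{X^{[3]}}$, the vector $d\mu(\check{\xi}_2\otimes\check{\xi}_0)$ is the $\hat{\partial}_1^*\gamma$-horizontal lift of $\widetilde{\xi}^{[3]}$, i.e.\ it equals $\check{\xi}_1$. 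Proposition~\ref{p:multvf} then yields (1).

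For part (2), I would first note that both multiplicative vector fields in the statement are genuine objects of $\mathbb{X}(P\toto X)$: the target is multiplicative by part (1) applied to $[\widetilde{\xi},\widetilde{\zeta}]$, which lifts $[\xi,\zeta]$ on $M$; and the source is the Lie bracket of the multiplicative vector fields $(\widetilde{\xi},\mathrm{Lift}_\gamma \widetilde{\xi}^{[2]})$ and $(\widetilde{\zeta},\mathrm{Lift}_\gamma \widetilde{\zeta}^{[2]})$ of part (1), which is again multiplicative since Lie brackets of related vector fields are related (cf.\ \cite{mackenzie1998classical}). Both objects have base vector field $[\widetilde{\xi},\widetilde{\zeta}]$, so by the description of morphisms of $\mathbb{X}(P\toto X)$ recalled in Section~\ref{ss:mult} it suffices to produce a section $a\in\Gamma(A_P)$ with $\widetilde{a}=0$ and
\[
\check{a}=\mathrm{Lift}_\gamma [\widetilde{\xi},\widetilde{\zeta}]^{[2]}-[\mathrm{Lift}_\gamma \widetilde{\xi}^{[2]},\mathrm{Lift}_\gamma \widetilde{\zeta}^{[2]}].
\]
To evaluate the right-hand side, I would use that $[\widetilde{\xi}^{[2]},\widetilde{\zeta}^{[2]}]=[\widetilde{\xi},\widetilde{\zeta}]^{[2]}$, so the horizontal part of $[\mathrm{Lift}_\gamma \widetilde{\xi}^{[2]},\mathrm{Lift}_\gamma \widetilde{\zeta}^{[2]}]$ is exactly $\mathrm{Lift}_\gamma [\widetilde{\xi},\widetilde{\zeta}]^{[2]}$; hence the right-hand side is vertical, equal to $-v\genz$ where $v\genz$ is the vertical component of $[\mathrm{Lift}_\gamma \widetilde{\xi}^{[2]},\mathrm{Lift}_\gamma \widetilde{\zeta}^{[2]}]$. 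Contracting $\gamma$ with that bracket and using the structure equation $\gamma([Y,Z])=-d\gamma(Y,Z)$ for horizontal $Y,Z$, together with $d\gamma=\pi_P^*\curv(\gamma)$, $\curv(\gamma)=\partial B$ and the relations $\widetilde{\xi}^{[2]}\sim_{\partial_j}\widetilde{\xi}$, $\widetilde{\zeta}^{[2]}\sim_{\partial_j}\widetilde{\zeta}$, one finds $v=\pm\delta g$ with $g=\iota_{\widetilde{\xi}}\iota_{\widetilde{\zeta}}B$ (identifying $\delta g=\pi_P^*(\partial g)$ on $P$). Finally, for $a=g\genz\big|_X\in\Gamma(A_P)$, Proposition~\ref{p:properties}(1) gives $\widetilde{a}=0$ and $\check{a}=(\delta g)\genz$, which matches the displayed requirement once the sign conventions are fixed consistently; this exhibits $a$ as the asserted isomorphism. (Alternatively, one can first invoke Proposition~\ref{p:properties}(2) to write the right-hand side as $(\text{function})\genz$ and only then identify that function using $\curv(\gamma)=\partial B$ and Lemma~\ref{l:v_a}.)

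The step I expect to be the main obstacle is this last computation in part (2): one must keep careful track of the two simplicial differentials $\partial$ on $X^{[\bullet+1]}$ and $\delta$ on $P_\bullet$, of the identities $\curv(\gamma)=\partial B$ and $d\gamma=\delta B$, and of the sign conventions for $\genz$, for $\gamma(\genz)$, and for the order of contraction in $g=\iota_{\widetilde{\xi}}\iota_{\widetilde{\zeta}}B$, so that the curvature term matches $\delta g$ exactly. Condition (3) of part (1) is the other spot that requires care, since one must translate the statement that $\mu$ is connection-preserving into the assertion that $d\mu$ intertwines the relevant horizontal lifts.
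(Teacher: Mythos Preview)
Your approach is essentially the same as the paper's. For (1) you both verify the conditions of Proposition~\ref{p:multvf}, reducing the only nontrivial condition to the fact that $\mu$ (equivalently, the groupoid multiplication) preserves horizontal vectors because $\gamma$ is multiplicative; for (2) you both identify the vertical component of the bracket of horizontal lifts via the curvature and then invoke Proposition~\ref{p:properties}(1). The paper's computation for (2) is marginally more direct---it writes $\delta g = \iota_{\mathrm{Lift}_\gamma\widetilde{\xi}^{[2]}}\iota_{\mathrm{Lift}_\gamma\widetilde{\zeta}^{[2]}}\,\delta B = \iota_{\mathrm{Lift}_\gamma\widetilde{\xi}^{[2]}}\iota_{\mathrm{Lift}_\gamma\widetilde{\zeta}^{[2]}}\,d\gamma = \iota_{[\mathrm{Lift}_\gamma\widetilde{\xi}^{[2]},\mathrm{Lift}_\gamma\widetilde{\zeta}^{[2]}]}\gamma$ in one line using $\delta B = d\gamma$ on $P$, rather than passing through $\curv(\gamma)=\partial B$ on $X^{[2]}$ and pulling back---but this is a cosmetic difference. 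Your caution about the sign is well placed: the paper itself arrives at $\check{a}=[\mathrm{Lift}_\gamma\widetilde{\xi}^{[2]},\mathrm{Lift}_\gamma\widetilde{\zeta}^{[2]}]-\mathrm{Lift}_\gamma[\widetilde{\xi},\widetilde{\zeta}]^{[2]}$, which is source minus target rather than target minus source, so the arrow direction in the statement is off by a sign (harmless, since every morphism in $\mathbb{X}(\mathbf{P})$ is invertible).
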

\begin{proof}
To verify (1), observe that the first two items of Proposition \ref{p:multvf} are easily checked for  $({\mathbf{x}},\mathrm{Lift}_\gamma {\mathbf{x}}^{[2]})$. It remains to check compatibility with the groupoid multiplication. This follows from the observation that the product of horizontal vectors in $TP$ is horizontal, since the connection $\gamma$ is multiplicative.
%---i.e., $m^*\gamma = \pr_1^*\gamma + \pr_2^*\gamma$.

To verify (2), we note 
\begin{align*}
\delta g &= \delta \iota_{{\mathbf{x}}}\iota_{{\mathbf{z}}} B \\
	&=  \iota_{\mathrm{Lift}_\gamma {\mathbf{x}}^{[2]}}\, \iota_{\mathrm{Lift}_\gamma{\mathbf{z}}^{[2]}} \delta B \\
	&= \iota_{\mathrm{Lift}_\gamma {\mathbf{x}}^{[2]}}\, \iota_{\mathrm{Lift}_\gamma{\mathbf{z}}^{[2]}} d\gamma \\ 
	&= \iota_{[\mathrm{Lift}_\gamma {\mathbf{x}}^{[2]},\mathrm{Lift}_\gamma {\mathbf{z}}^{[2]}]} \gamma
\end{align*}
Therefore, by Proposition \ref{p:properties} (1), $a \in \Gamma(A_P)$ satisfies
\[
({\mathbf{a}},{\bar{\mathbf{a}}})=(0, [\mathrm{Lift}_\gamma {\mathbf{x}}^{[2]},\mathrm{Lift}_\gamma {\mathbf{z}}^{[2]}]- \mathrm{Lift}_\gamma [{\mathbf{x}},{\mathbf{z}}]^{[2]}),
\]
as required.
\end{proof}

Let $({\mathbf{x}},{\mathbf{p}})$ be a multiplicative vector field on a bundle gerbe $\calG = (X,P,\mu)$ over $M$, and let $\xi$ denote the vector field on $M$ to which $\mathbf{x}$ descends. Suppose $\calG$ is given a connective structure $(B,\gamma)$ and that $\xi$ preserves the 3-curvature $\chi \in \Omega^3(M)$ of the connective structure---that is, $L_\xi \chi =0$. It follows that 
\[
dL_{\mathbf{x}} B = 0, \quad \delta L_{\mathbf{x}} B = dL_{\mathbf{p}} \gamma, \quad \text{and} \quad \delta L_{\mathbf{p}} \gamma=0.
\]
That is, a necessary condition on $(\mathbf{x},\mathbf{p})$ for $\xi$ to preserve the 3-curvature of a connective structure is that $L_\mathbf{x} B + L_\mathbf{p} \gamma \in \Omega^2(X)\oplus \Omega^1(P) $ be closed in the Bott-Shulman-Stasheff double complex $\Omega^*(P_\bullet)$ associated to the Lie groupoid $P\toto X$.  In Definition \ref{d:connexpresmultivf} below, we say that $({\mathbf{x}},{\mathbf{p}})$ preserves the connective structure $(B,\gamma)$ whenever $L_\mathbf{x} B + L_\mathbf{p} \gamma$ is exact.

 To further motivate  Definition \ref{d:connexpresmultivf}, suppose the vector fields $\xi$, $\mathbf{x}$, and  $\mathbf{p}$ have global flows $F$, $F_0$ and $\widetilde{F}_1$, respectively (where we suppress the time variable).  Since these flows are compatible with the groupoid structure, $(F_0,\widetilde{F}_1): \mathbf{P} \to \mathbf{P}$ defines a Lie groupoid isomorphism; however, in light of the definition of isomorphism in the bicategory of bundle gerbes over $M$ used here (Definition \ref{d:gerbe1arrow}), we ought to consider instead the more flexible notion of a \emph{Morita morphism} covering $F$, that is, an isomorphism  $Q: \calG \dashto F^*\calG$ of bundle gerbes over $M$.

To that end, we begin by introducing some notation used in the description of $F^*\calG$, as in the diagram below.
\[
\xymatrix{
{\hat{F}^{[2]*}}P \ar[r] \ar[d] & P \ar[d] \\
F^*{X^{[2]}} \ar[r]^-{\hat{F}^{[2]}} \ar@<-.7ex>[d]\ar@<.7ex>[d] & X^{[2]} \ar@<-.7ex>[d]\ar@<.7ex>[d] \\
F^*X \ar[r]^{\hat{F}} \ar[d] & X \ar[d]^-\pi \\
M \ar[r]^-F & M
}
\]
Consider the common refinement  
$\xymatrix{X & \ar@{=}[l] X \ar[r]^-{(\pi,F_0)} &F^*X}$, and take $Q=X\times S^1$. The Lie groupoid isomorphism $(F_0, \widetilde{F}_1)$ canonically defines an isomorphism of $S^1$-bundles $\varphi: (\pi,F_0)^*({\hat{F}^{[2]*}}P) \otimes \partial_0^*Q \to \partial_1^*Q \otimes P$, so that $(Q,\varphi)$ is the desired isomorphism of bundle gerbes. 
\\
\\
To refine $Q$ to an isomorphism of bundle gerbes with connective structure, let 
 $\gamma_Q = \pr_1^* \alpha + \pr_2^*\, d\theta$, for some $\alpha\in \Omega^1(X)$. Then $(Q,\gamma)$ will define an isomorphism $(\calG,B,\gamma) \dashto F^*(\calG, B, \gamma)$ whenever 
 \[
{F}_0^*B - B = d\alpha \quad \text{ and } \quad \widetilde{F}_1^*\gamma -\gamma = \delta \alpha.
\]
Hence, infinitesimally, we arrive at the following definition.

\begin{definition} \label{d:connexpresmultivf}
Let $({\mathbf{x}},{\mathbf{p}})$ be a multiplicative vector field on a bundle gerbe $\calG = (X,P,\mu)$ over $M$ with connective structure $(B,\gamma)$. We say  $({\mathbf{x}},{\mathbf{p}})$ \emph{preserves $(B,\gamma)$} if there exists a 1-form $\alpha \in \Omega^1(X)$ such that 
\begin{equation} \label{e:preserve}
L_{{\mathbf{x}}} B = d\alpha \quad \text{ and } \quad L_{{\mathbf{p}}} \gamma = \delta \alpha.
\end{equation}
\end{definition}

\begin{remark} \label{r:connexpresmultivf}
One could also consider multiplicative vector fields $({\mathbf{x}},{\mathbf{p}})$ on a bundle gerbe $\calG$ that preserve a connection $\gamma$ (without necessarily preserving a curving). In this case,  $\alpha$ is required to satisfy only the  condition $ L_{{\mathbf{p}}} \gamma = \delta \alpha$ from \eqref{e:preserve}.
\eoe \end{remark}

\begin{proposition} \label{p:connex_pres_subcat}
Let $\calG = (X,P,\mu)$ be a bundle gerbe over $M$ with connective structure $(B,\gamma)$, and let $a\in \Gamma(A_P)$. The multiplicative vector field $({\mathbf{a}},{\bar{\mathbf{a}}})$ preserves $(B,\gamma)$.
\end{proposition}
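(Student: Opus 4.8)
The plan is to show that the multiplicative vector field $(\widetilde{a},\check{a})$ arising from a section $a\in\Gamma(A_P)$ preserves the connective structure $(B,\gamma)$ in the sense of Definition \ref{d:connexpresmultivf}, by exhibiting an explicit 1-form $\alpha\in\Omega^1(X)$ satisfying $L_{\widetilde{a}}B = d\alpha$ and $L_{\check{a}}\gamma = \delta\alpha$. The natural candidate comes straight from Lemma \ref{l:v_a}: set $\alpha = -\mathsf{v}_a = -\epsilon^*\iota_{\overrightarrow{a}}\gamma$, a 1-form on $X$. Actually, since $\widetilde{a} = dt(a) = \rho(a)$ need not descend to $M$ in general but does cover a vector field on $M$ (indeed the relevant $\widetilde{\xi}$ here is $\widetilde{a}$, which by the preceding discussion descends to a vector field on $M$), and since the point of Proposition \ref{p:properties}(2)--(4) is that such $(\widetilde{a},\check{a})$ are exactly the multiplicative vector fields lifting zero on $M$ up to isomorphism — wait, that is not quite right either; $\widetilde{a}$ need not be $\pi$-related to $0$. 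Let me instead reason directly.

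First I would compute $L_{\check{a}}\gamma$. Using Cartan's formula, $L_{\check{a}}\gamma = d\iota_{\check{a}}\gamma + \iota_{\check{a}}d\gamma$. By Lemma \ref{l:v_a}, $\iota_{\check{a}}\gamma = -\delta\mathsf{v}_a$, so $d\iota_{\check{a}}\gamma = -d\delta\mathsf{v}_a = -\delta d\mathsf{v}_a$ (the exterior differential commutes with the simplicial differential $\delta$). For the second term, $d\gamma = \delta B$, so $\iota_{\check{a}}d\gamma = \iota_{\check{a}}\delta B$; since $\check{a}\sim_{\pi_P}\widetilde{a}^{[2]}$ (established in the discussion preceding Proposition \ref{p:multvf}, applied to the multiplicative vector field $(\widetilde{a},\check{a})$), one has $\iota_{\check{a}}\delta B = \delta\iota_{\widetilde{a}}B$ — here I am using the compatibility of contraction by $\pi_P$- (resp. $\partial_j$-) related vector fields with pullback along the face maps, exactly as in the proof of Proposition \ref{p:curv_measures_bracket_preserve_failure}(2). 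Hence $L_{\check{a}}\gamma = \delta(\iota_{\widetilde{a}}B - d\mathsf{v}_a)$. This identifies the correct choice: $\alpha = \iota_{\widetilde{a}}B - d\mathsf{v}_a$.

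It then remains to check the first equation, $L_{\widetilde{a}}B = d\alpha = d\iota_{\widetilde{a}}B$ (the $d\mathsf{v}_a$ term is exact, so $d\alpha = d\iota_{\widetilde{a}}B$). By Cartan's formula, $L_{\widetilde{a}}B = d\iota_{\widetilde{a}}B + \iota_{\widetilde{a}}dB$, so I must show $\iota_{\widetilde{a}}dB = 0$. Here is where I use that $\widetilde{a}$ comes from a Lie algebroid section: $\widetilde{a} = dt(a)$ with $a$ a section of $A_P = \ker ds|_X$, and — more to the point — the induced vector field on $M$ is zero because... actually the cleanest route is: $dB = \pi^*\chi$ for the closed 3-form $\chi\in\Omega^3(M)$, and $\widetilde{a}$ is $\pi$-related to the zero vector field on $M$. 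To see the latter, recall from the paragraph before Proposition \ref{p:multvf} that for any multiplicative vector field $(\widetilde{\xi},\check\xi)$ the base field $\widetilde\xi$ descends to $M$; for the special multiplicative vector field $(\widetilde a,\check a)$ coming from $a\in\Gamma(A_P)$, the source map $s=\partial_0\circ\pi_P$ is $S^1$-invariant and $a\in\ker ds|_X$ forces $d\pi\circ dt(a) = d\pi\circ ds(a) = 0$ (since $\pi\circ\partial_0 = \pi\circ\partial_1$ on $X^{[2]}$, equivalently $\pi_M\circ s = \pi_M\circ t$ on $P$), so $\widetilde a$ is $\pi$-related to $0$. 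Therefore $\iota_{\widetilde a}dB = \iota_{\widetilde a}\pi^*\chi = \pi^*\iota_0\chi = 0$, completing the verification.

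The main obstacle, such as it is, is bookkeeping: making sure the contraction–pullback identities (that $\iota$ by $\partial_j$-related vector fields commutes with $\partial_j^*$, hence with $\delta$ and $\partial$) are invoked correctly on the right spaces $X$, $X^{[2]}$, $P$, $P\times_X P$, and that the relation $\check a\sim_{\pi_P}\widetilde a^{[2]}$ together with multiplicativity of $\gamma$ ($\delta\gamma = 0$) and $d\gamma = \delta B$ are used consistently — all of which are already set up in Section \ref{ss:centext} and in the proof of Proposition \ref{p:curv_measures_bracket_preserve_failure}. No genuinely new ideas are needed; the proof is a short Cartan-calculus computation anchored by Lemma \ref{l:v_a} and the observation that $\widetilde a$ kills $dB$.
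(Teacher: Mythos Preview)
Your proof is correct and follows the same approach as the paper: you choose the same $\alpha = \iota_{\widetilde{a}}B - d\mathsf{v}_a$ and verify the two conditions using Lemma~\ref{l:v_a}, the identity $d\gamma = \delta B$, and the fact that $\widetilde{a}\sim_\pi 0$ (which you correctly justify via $\pi\circ s = \pi\circ t$, despite your earlier hesitation). The paper's proof is simply a terse summary of exactly this computation.
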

\begin{proof}
Let $\alpha = \iota_{{\mathbf{a}}} B - d\mathsf{v}_a$, and use Lemma \ref{l:v_a} and the equality $\delta B=d\gamma$ to check that
$
L_{{\mathbf{a}}} B = d\alpha$,  and $L_{{\bar{\mathbf{a}}}} \gamma = \delta \alpha$.
\end{proof}

\begin{corollary} \label{c:subcat}
Let $(X,P,\mu)$ be a bundle gerbe over $M$ with connective structure $(B,\gamma)$. 
There is a category $\mathbb{X}(\mathbf{P};B,\gamma)$ whose objects are pairs $({\mathbf{x}},{\mathbf{p}}; \alpha) \in \mathbb{X}(\mathbf{P})_0 \times \Omega^1(X) $  satisfying equation \eqref{e:preserve}, and whose morphisms are sections $a\in \Gamma(A_P)$: 
\[
a:({\mathbf{x}},{\mathbf{p}};\alpha) \to ({\mathbf{x}'},{\mathbf{p}'};\alpha'), \quad \text{iff} \quad ({\mathbf{x}'},{\mathbf{p}'};\alpha')-({\mathbf{x}},{\mathbf{p}};\alpha) =({\mathbf{a}},{\bar{\mathbf{a}}},\iota_{{\mathbf{a}}}B - d\mathsf{v}_a).
\]
Moreover, the forgetful functor  $\mathbb{X}(\mathbf{P};B,\gamma) \longrightarrow \mathbb{X}(\mathbf{P})$ is full and faithful.
\end{corollary}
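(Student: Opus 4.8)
The plan is to verify the three assertions in turn: that $\mathbb{X}(\mathbf{P};B,\gamma)$ really is a category, that the indicated forgetful assignment is a functor, and that it is full and faithful. For the category structure, I would first observe that the objects are, by construction, pairs $(\widetilde{\xi},\check{\xi};\alpha)$ where $(\widetilde{\xi},\check{\xi})$ is a multiplicative vector field (an object of $\mathbb{X}(\mathbf{P})_0$) and $\alpha\in\Omega^1(X)$ witnesses that the pair preserves $(B,\gamma)$ via \eqref{e:preserve}; Proposition \ref{p:connex_pres_subcat} guarantees this set is nonempty in the relevant cases, but more importantly it tells us how morphisms should act on the witnessing $1$-forms. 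The key point is that the rule $a:(\widetilde{\xi},\check{\xi};\alpha)\to(\widetilde{\zeta},\check{\zeta};\beta)$ iff $(\widetilde{\zeta},\check{\zeta};\beta)-(\widetilde{\xi},\check{\xi};\alpha)=(\widetilde{a},\check{a},\iota_{\widetilde{a}}B-d\mathsf{v}_a)$ is well-posed: given a source object $(\widetilde{\xi},\check{\xi};\alpha)$ and a section $a\in\Gamma(A_P)$, the target is uniquely determined as $(\widetilde{\xi}+\widetilde{a},\check{\xi}+\check{a};\alpha+\iota_{\widetilde{a}}B-d\mathsf{v}_a)$, and I must check this target is again a legitimate object — i.e.\ that $\alpha+\iota_{\widetilde{a}}B-d\mathsf{v}_a$ witnesses preservation of $(B,\gamma)$ for $(\widetilde{\xi}+\widetilde{a},\check{\xi}+\check{a})$. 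This is immediate by linearity of the Lie derivative and $\iota$ together with Proposition \ref{p:connex_pres_subcat}: $L_{\widetilde{\xi}+\widetilde{a}}B=L_{\widetilde{\xi}}B+L_{\widetilde{a}}B=d\alpha+d(\iota_{\widetilde{a}}B-d\mathsf{v}_a)$, and similarly $L_{\check{\xi}+\check{a}}\gamma=\delta\alpha+\delta(\iota_{\widetilde{a}}B-d\mathsf{v}_a)$, using the identities established in the proof of Proposition \ref{p:connex_pres_subcat}.

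Composition of morphisms is then addition of sections in $\Gamma(A_P)$, exactly as in $\mathbb{X}(\mathbf{P})$: if $a:(\widetilde{\xi},\check{\xi};\alpha)\to(\widetilde{\zeta},\check{\zeta};\beta)$ and $b:(\widetilde{\zeta},\check{\zeta};\beta)\to(\widetilde{\eta},\check{\eta};\gamma')$, then $a+b$ sends $(\widetilde{\xi},\check{\xi};\alpha)$ to $(\widetilde{\eta},\check{\eta};\gamma')$ because the increments in all three slots add; the identity morphism on any object is the zero section, since $\widetilde{0}=0$, $\check{0}=0$, $\mathsf{v}_0=0$. Associativity and unitality are inherited from the additive group structure on $\Gamma(A_P)$. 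Thus $\mathbb{X}(\mathbf{P};B,\gamma)$ is a category, and in fact the forgetful assignment $(\widetilde{\xi},\check{\xi};\alpha)\mapsto(\widetilde{\xi},\check{\xi})$ on objects and $a\mapsto a$ on morphisms is manifestly a functor to $\mathbb{X}(\mathbf{P})$: it preserves composition ($a+b\mapsto a+b$) and identities ($0\mapsto 0$), and it is well-defined on morphisms because the underlying $(\widetilde{\xi},\check{\xi})\to(\widetilde{\zeta},\check{\zeta})$ in $\mathbb{X}(\mathbf{P})$ is precisely the one with $(\widetilde{\zeta},\check{\zeta})-(\widetilde{\xi},\check{\xi})=(\widetilde{a},\check{a})$, matching the description of morphisms in $\mathbb{X}(\mathbf{P})$ recalled in Section \ref{ss:mult}.

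For faithfulness, note that on any fixed pair of objects the map on hom-sets is $a\mapsto a$, literally the identity on the relevant subset of $\Gamma(A_P)$, hence injective. For fullness, suppose $(\widetilde{\xi},\check{\xi};\alpha)$ and $(\widetilde{\zeta},\check{\zeta};\beta)$ are objects of $\mathbb{X}(\mathbf{P};B,\gamma)$ and $a\in\Gamma(A_P)$ is a morphism between their images in $\mathbb{X}(\mathbf{P})$, i.e.\ $(\widetilde{\zeta},\check{\zeta})-(\widetilde{\xi},\check{\xi})=(\widetilde{a},\check{a})$. I must produce a lift, which forces $a$ itself; the only thing to verify is that $a$ is in fact a morphism in $\mathbb{X}(\mathbf{P};B,\gamma)$, i.e.\ that $\beta-\alpha=\iota_{\widetilde{a}}B-d\mathsf{v}_a$. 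This is the crux: a priori both $\alpha$ and $\beta$ are only determined up to a closed form pulled back from $M$ (by the exactness of \eqref{eq:deltaexact}), so the identity $\beta-\alpha=\iota_{\widetilde{a}}B-d\mathsf{v}_a$ is not automatic for arbitrary choices of witnesses. I expect this to be the main obstacle, and I anticipate the resolution requires either (a) building into the definition of the objects a normalization of $\alpha$ — which the corollary statement does not appear to do — or (b) showing that $\gamma_0:=\beta-\alpha-(\iota_{\widetilde{a}}B-d\mathsf{v}_a)$ satisfies $d\gamma_0=0$ and $\delta\gamma_0=0$ (the first from comparing $L_{\widetilde\zeta}B-L_{\widetilde\xi}B$ with $L_{\widetilde a}B$, the second from comparing $L_{\check\zeta}\gamma-L_{\check\xi}\gamma$ with $L_{\check a}\gamma$), whence $\gamma_0=\pi^*\gamma_0'$ for a closed $\gamma_0'\in\Omega^1(M)$, and then arguing that replacing $\beta$ by $\beta-\pi^*\gamma_0'$ gives the same object of $\mathbb{X}(\mathbf{P};B,\gamma)$ — which again needs the objects to be taken up to such an equivalence, or the hom-sets to be insensitive to it. The cleanest fix, and the one I would adopt in writing the proof, is to interpret "objects are pairs $(\widetilde\xi,\check\xi;\alpha)$ satisfying \eqref{e:preserve}" with the understanding that a morphism $a$ exists from $(\widetilde\xi,\check\xi;\alpha)$ to $(\widetilde\zeta,\check\zeta;\beta)$ exactly when the three displayed increments hold on the nose, so that fullness becomes the statement that every $\mathbb{X}(\mathbf P)$-morphism between the images does arise this way — which, given the rigidity $\check a$ determines $\iota_{\check a}\gamma=-\delta\mathsf{v}_a$ by Lemma \ref{l:v_a} and $\iota_{\widetilde a}B$ is determined by $\widetilde a$, reduces to checking the single identity above, and this I would carry out by the $d$- and $\delta$-closedness argument just sketched, concluding via exactness of \eqref{eq:deltaexact}.
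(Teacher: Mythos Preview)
Your treatment of the category structure and of faithfulness is correct and is essentially what the paper intends: the corollary is stated without proof, as an immediate consequence of Proposition~\ref{p:connex_pres_subcat}, which supplies the witnessing $1$-form $\iota_{\widetilde a}B-d\mathsf v_a$ for $(\widetilde a,\check a)$ and hence gives well-defined targets and composition.

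The gap is precisely where you flag it, and your proposed fix does not close it. You correctly deduce that $\gamma_0:=\beta-\alpha-(\iota_{\widetilde a}B-d\mathsf v_a)$ satisfies $d\gamma_0=0$ and $\delta\gamma_0=0$, but exactness of~\eqref{eq:deltaexact} then only gives $\gamma_0=\pi^*c$ for some closed $c\in\Omega^1(M)$; nothing in the argument forces $c=0$, so ``concluding via exactness'' does not yield the required identity. Indeed, fullness fails as literally stated: for any non-constant $h\in C^\infty(M)$, the section $a=(\pi^*h)\,\genz\big|_X\in\Gamma(A_P)$ has $(\widetilde a,\check a)=(0,0)$ by Proposition~\ref{p:properties}(1), so $a\in\mathrm{Hom}_{\mathbb X(\mathbf P)}\big((0,0),(0,0)\big)$, yet $\iota_{\widetilde a}B-d\mathsf v_a=-\pi^*dh\neq 0$, so $a\notin\mathrm{Hom}_{\mathbb X(\mathbf P;B,\gamma)}\big((0,0;0),(0,0;0)\big)$. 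Your instinct that one must either normalise $\alpha$ or identify objects differing by $\pi^*$ of a closed $1$-form is therefore the correct diagnosis; without some such adjustment to the statement, only faithfulness survives.
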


\begin{example} \label{eg:multivf_trivgerbe}
Consider the trivial bundle gerbe $\calI_\omega$ with trivial connection and curving $\omega$. A multiplicative vector field is simply a vector field on $M$. Indeed, for a multiplicative vector field, the  component on $P=M\times S^1$ must be the trivial lift of a vector field $\xi$ on $M$, since its vertical component would (by virtue of being multiplicative) define a continuous homomorphism $S^1\to \RR$, which must be trivial. 
Therefore, a multiplicative vector field $(\xi,\xi+0)$ on $P\toto M$ preserves the connective structure if and only $L_\xi \omega$ is exact.
\eoe
\end{example}

\begin{example} \label{eg:collier:subcat}
Continuing from Examples \ref{eg:collier:symmetry} and \ref{eg:collier:morphisms}, let $(\xi_{ij},f_{ij})$ be a multiplicative vector field on the bundle gerbe $P$ given in terms of \v{C}ech data. Let $(\{B_i\},\{A_{ij}\})$ be a collection of 2-forms on the $U_i$ and 1-forms on the $U_{ij}$ that specify a connective structure on $P$, and suppose that $(\xi_{ij},f_{ij})$ preserves the connective structure. Then there exists a collection of 1-forms $a_i \in \Omega^1(U_i)$ satisfying 
\begin{align*}
\pr^*(a_j- a_i)&= L_{\xi_{ij}+\pr^*f_{ij} \genz} (\pr^*A_{ij} + d\theta) \\
	&= \pr^*L_{\xi_{ij}} A_{ij} + L_{\pr^*f_{ij} \genz}  d\theta \\
	&= \pr^*(L_{\xi_{ij}} A_{ij} + d f) 
\end{align*}
on $U_{ij}$.
That is, 
\begin{equation} \label{e:collier_connexpres}
a_j- a_i = L_{\xi_{ij}} A_{ij} + d f.
\end{equation}

In \cite{collier2011infinitesimal}, Collier defines the category $\mathcal{L}_{(g_{ijk},A_{ij})}$ whose objects $(\xi,\{-f_{ij}\}, \{-a_i\})$  are triples  consisting of a vector field $\xi$ on $M$, functions $f_{ij}$ satisfying \eqref{eq:collier_objects}, and 1-forms $a_i$ satisfying \eqref{e:collier_connexpres}, as well as a (full) subcategory $\mathcal{L}_{(g_{ijk},B_i,A_{ij})}$ whose objects satisfy the additional constraint $L_\xi B_i = da_i$ (i.e., infinitesimal symmetries that preserve the connection and the curving). As in Example \ref{eg:collier:morphisms}, the morphisms are given by collections of functions $\{u_i:U_i \to \RR\}$, which, as noted in that Example, correspond to sections $a$ of the underlying Lie algebroid. Therefore, $\{u_i\}$ is a morphism $(\xi,\{-f_{ij}\}, \{-a_i\}) \longrightarrow  (\xi,\{-g_{ij}\}, \{-b_i\})$ if and only if $f_{ij}-g_{ij} = u_j-u_i$ and $b_i-a_i=du_i$, and we recover the category of connection-preserving infinitesimal symmetries from \emph{op.\ cit.} 
 \eoe
\end{example}

\subsection{Multiplicative vector fields on bundle gerbes as infinitesimal symmetries of the underlying gerbe} \label{ss:infsym}

The purpose of this section is to relate multiplicative vector fields on bundle gerbes to Collier's notion of infinitesimal symmetries of $S^1$-gerbes  viewed as stacks  \cite{collier2011infinitesimal}. Recall from Examples \ref{eg:collier:symmetry} and \ref{eg:collier:morphisms} that the category of multiplicative vector fields on a bundle gerbe $\mathcal{G}_{g_{ijk}}$ over a manifold $M$, given in terms of  a \v{C}ech cocycle $\{ g_{ijk} \}$ associated to an open cover $\{U_i\}$ of $M$, 
coincides with the category $\mathcal{L}_{g_{ijk}}$ of \emph{infinitesimal symmetries} of the gerbe as defined by Collier in \emph{op.\ cit.\ }.    In this section, we verify in Theorem \ref{t:equiv-mvf-col} that the category of multiplicative vector fields $\mathbb{X}(\mathbf{P})$ on a bundle gerbe $\calG=(X,P,\mu)$ is equivalent to the category $\mathcal{L}_{\calG}$ in \emph{op.\ cit.\ }of infinitesimal symmetries of the underlying gerbe (i.e., viewed as a \emph{stack} over $M$). 
A similar discussion in Example \ref{eg:collier:subcat} considers infinitesimal symmetries that preserve the connective structure of the gerbe, which is correspondingly generalized in Theorem \ref{t:equiv-mvf-col-withconnex}.  

The content of this section is not used elsewhere in the paper. Indeed, a main objective of the paper is to show how multiplicative vector fields encode infinitesimal symmetries without relying on the perspective or machinery of stacks.
Therefore, in the interest of brevity, this section  assumes a familiarity with the \emph{presheaf of groupoids} (i.e., stack) viewpoint of $S^1$-gerbes (e.g., as in Brylinski's book \cite{brylinski2007loop}). We shall follow Collier's notation closely, and review some  elements in  \cite{collier2011infinitesimal} to state our results precisely in that context.

To begin, recall how a bundle gerbe $\calG=(X,P,\mu)$ over $M$ gives rise to a gerbe, which by abuse of notation will also be denoted by $\calG$.  For each open set $U\subset M$, $\calG(U)$ is the category of local trivializations of the bundle gerbe restricted to $U$. More precisely, objects of $\calG(U)$ are pairs $(Q,\phi)$ consisting of an $S^1$-bundle $\pi_Q:Q \to X_U = \pi^{-1}(U)$ together with a bundle isomorphism $\phi:\partial Q = \partial_1^* Q^* \otimes \partial_0^* Q \to P_U =P\big|_{X_U^{[2]}}$, satisfying a compatibility condition with bundle gerbe product $\mu$ (\emph{cf}.\ Definition \ref{d:gerbe1arrow}). A morphism $(Q,\phi) \to (Q',\phi')$ is given by a bundle isomorphism $u:Q\to Q'$ that is compatible with $\phi$ and $\phi'$, $\phi \circ \partial u = \phi'$. The resulting presheaf of groupoids $\calG$ is seen to be a \emph{stack}; moreover, it naturally has \emph{band} equal to the sheaf of $S^1$-valued functions on $M$. That is, $\calG$ is an $S^1$-(banded) gerbe (see \cite{stevenson2000geometry} for details).

\subsubsection*{Infinitesimal symmetries of $S^1$-gerbes}

Let $\underline{\RR}_M$ and $\underline{S}^1_M$ denote the sheaves of smooth real and circle valued functions on $M$, respectively. For a vector field $\xi$ on $M$ and a function $g:M\to S^1$, let $\iota_{\xi} d\, \log(g) = \iota_{\xi} g^{-1}dg$, which defines a morphism of sheaves $\iota_{\xi} d\, \log: \underline{S}^1_M \to \underline{\RR}_M$.

Given  a vector field $\xi$ on $M$, Collier defines a category $\mathcal{L}_{\calG}(\xi)$ whose objects are \emph{infinitesimal symmetries} of an $S^1$-gerbe $\calG$ that \emph{lift} $\xi$.  Specifically, an object in $\mathcal{L}_{\calG}(\xi)$ is a morphism of gerbes
(i.e., a morphism of presheaves or a pseudo-natural transformation)
\[
F:\calG \to B\underline{\RR}_M,
\]
intertwining the morphisms of sheaves, $-\iota_{\xi} d\, \log$.  Here, $B\underline{\RR}_M$ denotes the trivial gerbe with band  $\underline{\RR}_M = C^\infty(M,\RR)$: given an open set $U\subset M$, $B\underline{\RR}_M(U)$ is the category of $\underline{\RR}_U$-torsors. In particular, we obtain a family of functors $F_U:\calG(U) \to B\underline{\RR}_M(U)$ (or simply $F$ when the underlying open set $U$ is understood) such that for 
 all objects $Q$ in $\calG(U)$,  $F(Q)$ is an $\underline{\RR}_U$-torsor; moreover, for
all morphisms $u:Q\to Q$, and all $g:U\to S^1$, $F(u): F(Q) \to F(Q)$ satisfies $F(u\cdot g) = F(u) - \iota_{\xi} g^{-1}dg$.  

A morphism $F\Rightarrow F'$ of infinitesimal symmetries lifting $\xi$ is a modification of the underlying pseudo-natural transformations. In particular, such a morphism is specified by a map of $\underline{\RR}_U$-torsors $T:F(Q) \to F'(Q)$ for every object $Q$ in $\calG(U)$.
Let $\mathcal{L}_{\calG}$ denote the category 
whose objects consist of the collection infinitesimal lifts $F$ of vector fields $\xi$ on $M$ as above, and whose morphisms are exactly those in $\mathcal{L}_{\calG}(\xi)$ described above  (i.e., there are no morphisms between infinitesimal lifts of different vector fields on $M$). Theorem \ref{t:equiv-mvf-col} below shows $\mathcal{L}_{\calG}$ is equivalent to the category of multiplicative vector fields $\mathbb{X}(\mathbf{P})$.

\begin{lemma} \label{l:algebroid}
Let $\pi:X\to M$ be a surjective submersion, let $\pi_Q:Q \to X$ be a principal $S^1$-bundle, and let $\partial Q = \partial_1^*Q^* \otimes \partial_0^* Q \to X^{[2]}$ and $(X,\partial Q, \mu)$ be the resulting canonically trivialized bundle gerbe. Let $a\in \Gamma(A_{\partial Q})$, and recall ${\mathbf{a}}=dt (a)$, and ${\bar{\mathbf{a}}} = \overrightarrow{a} + \overleftarrow{a}$. 
\begin{enumerate}
\item There is a unique vector field $\widehat{a} \in \mathfrak{X}(Q)^{S^1}$ such that the section $a$ is of the form 
$
a(x) = [({\mathbf{a}}(x),0,\widehat{a}(q),0)]$, where $\pi_Q(q)=x.
$

\item 
\(
{\bar{\mathbf{a}}}(x_1, x_2, q_1\otimes q_2) = [({\mathbf{a}}(x_1),{\mathbf{a}}(x_2),\widehat{a}(q_1),\widehat{a}(q_2))]
\) for all $(x_1,x_2,q_1\otimes q_2) \in \partial Q$.
\item Let $\gamma_Q$ denote a connection form on $Q\to X$, inducing a bundle gerbe connection $\partial \gamma_Q$ on $\partial Q$. Then $\pi_Q^*\mathsf{v}_a = - \iota_{\widehat{a}} \gamma_Q$.
\end{enumerate}
\end{lemma}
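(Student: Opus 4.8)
The statement is pointwise on $Q$, and the plan is to compute the left-hand side directly, by unwinding the definition of $\mathsf{v}_a$ and making the induced gerbe connection $\partial\gamma_Q$ explicit in the tangent-vector notation of parts (1)--(2). (It does not suffice to feed part (2) into Lemma~\ref{l:v_a}: that route gives $\delta\mathsf{v}_a = -\delta(\iota_{\widehat{a}}\gamma_Q)$ after $\iota_{\widehat{a}}\gamma_Q$ is descended to $X$, which determines $\mathsf{v}_a$ only up to a function pulled back from $M$; so a more direct argument is needed.) First I would recall that $\mathsf{v}_a = \epsilon^*\,\iota_{\overrightarrow{a}}(\partial\gamma_Q)$ and $\overrightarrow{a}(g) = dR_g(a(t(g)))$; since right translation by the unit $\epsilon(x)$ is the identity on the source fibre $s^{-1}(x)$, we get $\overrightarrow{a}(\epsilon(x)) = a(x)$, hence
\[
\mathsf{v}_a(x) \;=\; (\partial\gamma_Q)_{\epsilon(x)}\bigl(a(x)\bigr).
\]

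Next I would write $\partial\gamma_Q$ in the notation of (1)--(2). A tangent vector to $\partial Q$ is represented by a tuple $(u_1,u_2,w_1,w_2)$ with $w_i\in T_{q_i}Q$ covering $u_i\in T_{x_i}X$, taken modulo the diagonal $S^1$-direction, and the connection form induced on $\partial Q = \partial_1^*Q^*\otimes\partial_0^*Q$ by $\gamma_Q$ is
\[
(\partial\gamma_Q)\bigl([(u_1,u_2,w_1,w_2)]\bigr) \;=\; \gamma_Q(w_2) - \gamma_Q(w_1),
\]
the minus sign coming from the dual on the $\partial_1$-factor. This is well defined, since shifting $(w_1,w_2)$ by $(t\genz,t\genz)$ changes the right-hand side by $t-t=0$, and one checks it is a connection $1$-form on $\partial Q\to X^{[2]}$ by pairing with the generating vector field of the residual $S^1$-action. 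The unit $\epsilon(x)$ is represented by $(q,q)$ for any $q$ with $\pi_Q(q)=x$, and by part (1) the value $a(x)$ is represented by $(\widetilde{a}(x),0,\widehat{a}(q),0)$; the slots are consistent because $\widetilde{a}=dt(a)$ is $\pi$-vertical (the identity $\pi\circ s = \pi\circ t$ on $\partial Q$ forces $d\pi(\widetilde{a})=0$).

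Substituting into the two displayed formulas yields
\[
\mathsf{v}_a(x) \;=\; \gamma_Q(0) - \gamma_Q\bigl(\widehat{a}(q)\bigr) \;=\; -\,(\iota_{\widehat{a}}\gamma_Q)(q)
\]
for every $q$ over $x$, which is exactly $\pi_Q^*\mathsf{v}_a = -\iota_{\widehat{a}}\gamma_Q$. The one place requiring genuine care --- and, I expect, the main (though routine) obstacle --- is the bookkeeping of conventions: confirming that the dual sits on the $\partial_1$-factor, so that $\partial\gamma_Q = \gamma_Q(w_2)-\gamma_Q(w_1)$ rather than its negative, and that this expression really descends to $\partial Q$. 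Once those are fixed, every remaining step is forced.
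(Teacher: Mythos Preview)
Your proposal is correct and follows essentially the same route as the paper: both compute $\mathsf{v}_a(x)$ by evaluating the induced connection $\partial\gamma_Q=\partial_1^*\gamma_Q^*\otimes\partial_0^*\gamma_Q$ on the representative $[(\widetilde a(x),0,\widehat a(q),0)]$ from part~(1) at the unit $\epsilon(x)=(x,x;q\otimes q)$, picking up the minus sign from the dual on the $\partial_1$-factor. The only cosmetic difference is that you reach $\overrightarrow{a}(\epsilon(x))=a(x)$ via the observation that right translation by a unit is the identity, whereas the paper writes down the formula $\overrightarrow{a}(x_1,x_2,q_1\otimes q_2)=[(\widetilde a(x_1),0,\widehat a(q_1),0)]$ at a general point and then specializes.
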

\begin{proof}
We begin by reviewing elementary aspects of the Lie algebroid $A_{\partial Q}$. For a point $\mathbf{q}=(x_1,x_2,q_1\otimes q_2) \in \partial Q$, where $(x_1,x_2)\in X^{[2]}$ and $\pi_Q(q_i)=x_i$, vectors in $T_{\mathbf{q}} \partial Q$ may be represented by equivalences classes $\mathbf{v}=[(u_1,u_2,v_1,v_2)]$ where $u_i \in T_{x_i} X$, $v_i \in T_{q_i} Q$ satisfy $d\pi(u_1)=d\pi(u_2)$ and $d\pi_Q(v_i) = u_i$ for $i=1,2$. 

For $\mathbf{v} \in \ker ds$, where $s(\mathbf{q})=x_2$ is the source map of $\partial Q \toto X$, then $u_2=0$. Thus there exists a unique representative for $\mathbf{v}$ of the form $\mathbf{v}=[(u_1,0,w,0)]$. Therefore, letting $\widetilde{s}:\partial_1^*Q^* \times_{X^{[2]}} \partial_0^* Q \to X$  and $\widetilde{\partial}_1:\partial_1^*Q^* \times_{X^{[2]}} \partial_0^* Q \to Q$ denote the maps $\widetilde{s}(x_1,x_2,q_1,q_2) = x_2$ and $\widetilde{\partial}_1(x_1,x_2,q_1,q_2)=q_2$, respectively, we see that the map 
\begin{align*}
(\ker d\widetilde{s} \cap \ker d\widetilde{\partial}_1)\big|_{Q} &\longrightarrow Q\times_X A_{\partial Q} \\
(x,x,q,q;u,0,w,0) & \longmapsto (q;x,x,q\otimes q; [u,0,w,0])
\end{align*}
is a diffeomorphism. 

To prove (1), let $a\in \Gamma(A_{\partial Q})$. We obtain the vector field $\widehat{a} \in \mathfrak{X}(Q)$ as the composition
\[
\xymatrix@C=9ex{
Q \ar[r]^-{(\id,a \circ\pi_Q)} & Q\times_X A_{\partial Q} \ar[r] & (\ker d\widetilde{s} \cap \ker d\widetilde{\partial}_1)\big|_{Q} \ar[r]^-{\pr} & TQ.
}
\]
The vector field $\widehat{a}$ is $S^1$-invariant: 
\[
 [({\mathbf{a}}(x),0,\widehat{a}(q\cdot z),0)] = [({\mathbf{a}}(x),0,\widehat{a}(q),0)] =  [({\mathbf{a}}(x),0,dR_z \, \widehat{a}(q),0)] .
\]
Item (2) is a straightforward calculation. Similarly, since $\overrightarrow{a}(x_1,x_2,q_1\otimes q_2) = [({\mathbf{a}}(x_1),0,\widehat{a}(q_1),0 )]$, and the induced connection on the bundle gerbe $\partial Q$ is $\partial \gamma_Q = \partial_1^*\gamma_Q^* \otimes \partial_0^*\gamma_Q$, we see that for $q\in Q$,
\begin{align*}
\mathsf{v}_a (\pi_Q(q))& = ( \partial \gamma_Q )_{(\pi_Q(q),\pi_Q(q);q\otimes q)} [({\mathbf{a}}(\pi_Q(q)),0,\widehat{a}(q),0 )] \\ 
&= -(\gamma_Q)_q(\widehat{a}(q)),
\end{align*}
as required.
\end{proof}

\begin{theorem} \label{t:equiv-mvf-col}
Let $\calG=(X,P,\mu)$ be a bundle gerbe over $M$. There is an equivalence of categories $\mathbb{X}(\mathbf{P}) \to \mathcal{L}_{\calG}$.
\end{theorem}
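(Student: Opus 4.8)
The plan is to construct an explicit functor $\Phi: \mathbb{X}(\mathbf{P}) \to \mathcal{L}_{\calG}$ and show it is essentially surjective, full, and faithful. Given a multiplicative vector field $(\widetilde\xi,\check\xi)$ on $\mathbf{P}$, we know by Proposition \ref{p:multvf} that $\widetilde\xi$ descends to a vector field $\xi$ on $M$; the target of $\Phi(\widetilde\xi,\check\xi)$ should be an infinitesimal symmetry $F$ lifting $\xi$. To define $F$ on an open set $U$, recall that an object of $\calG(U)$ is a local trivialization $(Q,\phi)$ of the bundle gerbe over $X_U$, i.e.\ a principal $S^1$-bundle $Q\to X_U$ together with a connection-preserving-type isomorphism $\phi:\partial Q \to P_U$. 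Such a trivialization determines, via $\phi$, a bundle gerbe isomorphism between $(X_U, \partial Q, \mu)$ and $(X_U, P_U, \mu)$, hence an identification of the corresponding Lie groupoids. Transporting $\check\xi$ across this identification and using Lemma \ref{l:algebroid} (or its non-algebroid analogue — I would first record the needed variant for general multiplicative vector fields rather than just sections of $A_{\partial Q}$), the multiplicative vector field on $\partial Q \toto X_U$ is captured by a single $S^1$-invariant vector field $\widehat\xi$ on $Q$ covering $\widetilde\xi$. The failure of $\widehat\xi$ to be $S^1$-\emph{equivariant} (i.e.\ the difference between $\widehat\xi$ and a genuine lift, measured against the generating vector field) is a real number depending on the choice of local section, and this is precisely the torsor data: set $F(Q) = \{\text{$S^1$-invariant lifts of } \widetilde\xi \text{ to } Q \text{ restricting correctly}\}$, an $\underline{\RR}_U$-torsor, with the $-\iota_\xi d\log$-intertwining built in from how $\widehat\xi$ transforms under gauge transformations $g:U\to S^1$.

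The second step is to define $\Phi$ on morphisms. A morphism in $\mathbb{X}(\mathbf{P})$ is a section $a\in\Gamma(A_P)$ with $(\widetilde\zeta,\check\zeta) - (\widetilde\xi,\check\xi) = (\widetilde a,\check a)$; since $\widetilde a = dt(a)$ lifts the zero vector field on $M$ (see the discussion before Lemma \ref{l:v_a} and Proposition \ref{p:properties}), such a morphism exists only between lifts of the same $\xi$, matching the fact that $\mathcal{L}_{\calG}$ has no morphisms between symmetries lifting distinct vector fields. The section $a$ transported to $\partial Q$ gives, by Lemma \ref{l:algebroid}(1)-(2), the $S^1$-invariant vector field $\widehat a$, and the assignment $\widehat\xi \mapsto \widehat\xi + \widehat a$ on lifts defines the required map of $\underline{\RR}_U$-torsors $T:F(Q)\to F'(Q)$; functoriality (composition of sections is addition) is then immediate.

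For essential surjectivity, given an infinitesimal symmetry $F$ lifting $\xi$, one reconstructs $\check\xi$ by choosing, over a cover of $X$ by opens trivializing $P$, the local invariant lifts prescribed by $F$; the intertwining condition with $-\iota_\xi d\log$ guarantees these patch to a globally defined $S^1$-invariant $\check\xi$ on $P$ covering $\widetilde\xi^{[2]}$ and compatible with $\mu$, hence a multiplicative vector field by Proposition \ref{p:multvf}. Fullness and faithfulness reduce to the corresponding statement for sections of $A_P$ versus maps of torsors, which is the content of Lemma \ref{l:algebroid}. The main obstacle I expect is \emph{well-definedness and naturality of $\Phi$ on objects}: one must check that the torsor $F(Q)$ and the functor $F_U$ are independent (up to canonical isomorphism) of all auxiliary choices — in particular that the construction is compatible with restriction of opens $U'\subset U$ and with the bundle gerbe isomorphisms relating different local trivializations $Q$ — so that $F$ genuinely assembles into a morphism of stacks $\calG\to B\underline{\RR}_M$ rather than merely a compatible family of local data. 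This is essentially a descent/gluing verification; it is routine in spirit but bookkeeping-heavy, and is the reason the authors (per the section's stated intent) only sketch it.
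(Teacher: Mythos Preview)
Your approach is essentially the same as the paper's: the functor is defined exactly as you describe (the paper sets $F_{\overline{\xi}}(Q)$ to be the $\underline{\RR}_U$-torsor of $S^1$-invariant lifts $\widehat{\xi}$ of $\widetilde{\xi}$ to $Q$ satisfying $d\phi(\delta\widehat{\xi})=\check{\xi}|_{P_U}$), and morphisms are handled via Lemma~\ref{l:algebroid} and the assignment $\widehat{\xi}\mapsto\widehat{\xi}+\widehat{a}$, just as you propose.

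The treatment of the equivalence properties, however, differs in two places. For essential surjectivity, the paper does not reconstruct $(\widetilde{\xi},\check{\xi})$ by hand as you suggest; it simply invokes Collier's \cite[Corollary~6.1.8(2)]{collier2011infinitesimal}, which already classifies infinitesimal symmetries up to isomorphism. Your reconstruction sketch would need to address how $\widetilde{\xi}$ (not just $\xi$) is chosen and why the local choices of $\widehat{\xi}$ from the torsors $F(Q)$ can be made to glue---note that $F(Q)$ has no distinguished element, and the objects $Q$ live over $X_U=\pi^{-1}(U)$, not over opens in $X$. For fullness, your claim that it ``reduces to Lemma~\ref{l:algebroid}'' is too quick: that lemma is local (over one $(Q,\phi)$), whereas a modification $T$ is global data, and one must produce a single $a\in\Gamma(A_P)$. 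The paper instead argues globally from the outset: it first uses Proposition~\ref{p:properties} to find \emph{some} $a'\in\Gamma(A_P)$ with $(\widetilde{a'},\check{a'})=(\widetilde{\zeta}-\widetilde{\xi},\check{\zeta}-\check{\xi})$, observes that the ambiguity in such $a'$ is a $C^\infty(M)$-torsor (sections of the isotropy Lie algebra), and then corrects $a'$ by the function $g:M\to\RR$ measuring the discrepancy $T(\widehat{\xi})-T_{a'}(\widehat{\xi})$. This global-then-correct strategy sidesteps the patching you would otherwise need.
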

\begin{proof}
We construct the functor $\mathbb{X}(\mathbf{P}) \to \mathcal{L}_{\calG}$ directly and provide a sketch that it is an equivalence of categories. Let $\overline{\xi}=({\mathbf{x}},{\mathbf{p}})$ be a multiplicative vector field that descends to a vector field $\xi$ on $M$. Let $F_{\overline{\xi}}:\calG \to B\underline{\RR}_M$ be given as follows. To each object $(Q,\phi)$ in $\calG(U)$, let
\[
F_{\overline{\xi}}(Q) = \{\widehat{\xi} \in \mathfrak{X}(Q)^{S^1}\,  | \, \widehat{\xi} \sim_{\pi_Q} {\mathbf{x}}\big|_{X_U}, \, \text{and }\, d\phi(\delta\widehat{\xi}) = {\mathbf{p}}\big|_{P_U} \}
\]
be the corresponding $\underline{\RR}_U$-torsor. Here, using the $S^1$-invariance of $\widehat{\xi}$,   $\delta\widehat{\xi}$ 
denotes the vector field on $\partial_0^*Q \otimes \partial_1^*Q$ induced from $({\mathbf{x}},\widehat{\xi})$ on each factor $\partial_i^*Q$, $i=0,1$.

Let  $a\in \Gamma(A_P)$, and let $a_U \in \Gamma(A_{P_U})$ denotes its restriction to $X_U$.
Observe that $\phi:\partial Q \to P_U$ induces a Lie groupoid isomorphism 
\[
\xymatrix{
\partial_0^* Q \otimes \partial_1^*Q \ar[r]^-\phi  \ar@<-.7ex>[d]\ar@<.7ex>[d] & P_U \ar@<-.7ex>[d]\ar@<.7ex>[d] \\
X_U \ar@{=}[r] & X_U
}
\]
and hence we may view $a_U$ as a section of the Lie algebroid of $\partial Q \toto X_U$. 
The section $a_U$ gives rise to an $S^1$-invariant vector field $\widehat{a}$ on $Q$, as in Lemma \ref{l:algebroid}, which satisfies 
$\widehat{a} \sim_{\pi_Q} {\mathbf{a}}\big|_{X_U}, \, \text{and }\, d\phi(\delta\widehat{a}) = {\bar{\mathbf{a}}}\big|_{P_U}$.

Viewing $a:({\mathbf{x}},{\mathbf{p}}) \to ({\mathbf{x}'},{\mathbf{p}'})$ as a morphism in $\mathbb{X}(\mathbf{P})$, the vector fields $\widehat{a}$ from the previous paragraph give rise to a morphism $T_a:F_{\overline{\xi}} \Rightarrow F_{\overline{\zeta}}$. Specifically, for each $(Q,\phi) \in \calG(U)$, we obtain a map of $\underline{\RR}_U$-torsors
$T_a:F_{\overline{\xi}}(Q) \to F_{\overline{\zeta}}(Q)$, by setting
\(
T_a(\widehat{\xi}) = \widehat{\xi}+\widehat{a}.
\) Hence, we obtain a functor $\mathbb{X}(\mathbf{P}) \to \mathcal{L}_{\calG}$.

To show that the functor constructed above is an equivalence of categories, note first that, by \cite[Corollary 6.1.8 (2)]{collier2011infinitesimal}, it is immediate that the functor is essentially surjective.  Faithfulness follows from the uniqueness of the vector field $\widehat{a}$, as in Lemma \ref{l:algebroid}. To see that the functor is full, 
suppose we are given an equivalence of lifts $T:F_{\overline{\xi}}(Q) \to F_{\overline{\zeta}}(Q)$. There exists $a'\in \Gamma(A_P)$ with $(\mathbf{a}',\bar{\mathbf{a}}')=({\mathbf{x}'} - {\mathbf{x}},{\mathbf{p}'}-{\mathbf{p}})$, and the collection of such sections $a'$ is a torsor for $C^\infty(M)$ (i.e., 
sections of the isotropy Lie algebra)---this follows from Prop \ref{p:properties} (3) and (4)  and the  exact sequence \ref{eq:deltaexact}. For $\widehat{\xi} \in F_{\overline{\xi}}(Q)$,  $T(\widehat{\xi})-T_{a'}(\widehat{\xi})$ is an $S^1$-invariant lift of $0$ (restricted to $X_U$), which may thus identify with a (basic) function $f:U\to \RR$ that records its vertical component. Since 
 $d\phi( \delta (T(\widehat{\xi})-T_{a'}(\widehat{\xi})))=0$,
 $\delta f=0$, and we have that such functions patch together to give a global function $g:M\to \RR$. Letting $a=a' + (\pi^*g) \genz$, it is straightforward to check that $T=T_a$, as required.
\end{proof}

\subsubsection*{Infinitesimal symmetries of $S^1$-gerbes preserving a connective structure}

Recall that for an $S^1$-gerbe $\calG$, viewed as a presheaf of groupoids, a connection on $\calG$ is a morphism of gerbes $\calA:\calG \to B\underline{\Omega}^1_M$ intertwining the morphism of sheaves $-d\log: \underline{S}^1_M \to \Omega^1_M$. Here, $B\underline{\Omega}^1_M$ denotes the trivial gerbe with band $\underline{\Omega}^1_M$: specifically, given an  open set $U\subset M$, $\underline{\Omega}^1_M(U)$ is the category of $\underline{\Omega}^1_U$-torsors. In particular, we obtain a family of functors $\calA_U: \calG(U) \to B\underline{\Omega}^1_M(U)$ (or simply $\calA$ when the underlying open set $U$ is understood) such that for all objects $Q\in \calG(U)$, $\calA(Q)$ is a  $\underline{\Omega}^1_U$-torsor; moreover, for all morphisms $u:Q\to Q$, and all $g:U\to S^1$, $\calA(u): \calA(Q) \to \calA(Q)$ satisfies  $\calA(u\cdot g) = \calA(u) - d\log\, g$.  A \emph{curving}
$K$ for the connection $\calA$ assigns to each $Q\in \calG(U)$ and each $\gamma \in \calA(Q)$, a 2-form $K(\gamma)\in \Omega^2(U)$. This assignment is compatible with restrictions of open subsets, and morphisms $u:Q\to Q'$---that is,  $K((\calA)(u)(\gamma))=K(\gamma)$. Moreover, $K$ is $\Omega^1(U)$-equivariant: for all $\alpha \in \Omega^1(U)$,  $K(\gamma \cdot \alpha) = K(\gamma) + d\alpha$.  We will call the pair $(\calA,K)$ a \emph{connective structure} for $\calG$.

Observe that for an $S^1$-gerbe $\calG$ associated to a bundle gerbe $(X,P,\mu)$, a connective structure $(B,\gamma)$ for the bundle gerbe gives rise to a connective structure in the sense described above. Namely, for $U\subset M$ and $(Q,\phi)\in \calG(U)$, we set 
\[
\calA(Q) = \{ \text{connection 1-forms } \gamma_Q  \in \Omega^1(Q): \phi^*(\gamma|_{P_U}) = \partial \gamma_Q \},
\]
where $\partial \gamma_Q = \partial_1^*\gamma_Q^* \otimes \partial_0^* \gamma_Q$ is the induced connection on $\partial Q$.
The curving $K$ is then obtained as follows: observe that $\partial \mathrm{curv}(\gamma_Q) = \mathrm{curv}(\gamma|_{P_U}) =\partial B|_U$ and hence there exists a unique 2-form  $K(\gamma_Q)$ satisfying $\pi^*K(\gamma_Q) = \mathrm{curv}(\gamma_Q) - B|_U$.  Note that it follows that $dK(\gamma_Q) = -\chi|_U$. 

In \cite{collier2011infinitesimal, collier-PhDThesis2012},  Collier defines categories $\mathcal{L}_{(\calG,\calA)}$ and $\mathcal{L}_{(\calG,\calA,K)}$ of  infinitesimal symmetries of $\calG$  that preserve the connection and the connective structure, respectively. 
 Recall that an infinitesimal symmetry lifting $\xi \in \mathfrak{X}(M)$ that preserves the connection $\mathcal{A}$, also called a \emph{connective lift} of $\xi$, is a pair $(F,\Theta)$ consisting of an infinitesimal lift $F:\calG \to B\underline{\RR}_M$ in  $\mathcal{L}_{\calG}(\xi)$ and a (2-)morphism 
%$\Theta:L_{-\xi}[\calA] \Rightarrow \calA^0_{\underline{\RR}} \circ F$. 
$\Theta$ for the diagram:
\[
\xymatrix{
\calG \ar[r]^-{\calA} \ar[d]_{F} \drtwocell\omit{<0>\Theta} & B\underline{\Omega}^1_M \ar[d]^{L_{-\xi}[\,]} \\
B\underline{\RR}_M \ar[r]^{-d[\,]} & B\underline{\Omega}^1_M
}
\]
That is, $\Theta$ is a modification for the two composite pseudo-natural transformations represented in the diagram. Recall that in this context, $-d[\,]:B\underline{\RR}_M \to B\underline{\Omega}^1_M$ may be described as follows: for each open set $U\subset M$
we obtain a family of functors $B\underline{\RR}_M(U) \to B\underline{\Omega}^1_M(U)$ associating to each $\underline{\RR}_U$-torsor, the $\underline{\Omega}^1_U$-torsor associated to it via $-d:C^\infty(U) \to \Omega^1(U)$. That is, for any open $V\subset U$ and any $C^\infty(V)$-torsor $A$ over $V$, $-d[A] = (A\times \Omega^1(V))/\sim$, where by definition $(a\cdot f,\mu)\sim (a,\mu-df)$ for any $f:V\to \RR$. The map $L_{-\xi}[\,]$ may be described similarly.

A connection-preserving multiplicative vector field $({\mathbf{x}},{\mathbf{p}}; \alpha) \in \mathbb{X}(\mathbf{P};B, \gamma)$ naturally gives rise to a connective lift $(F_{\overline{\xi}},\Theta_{(\overline{\xi},\alpha)})$ as follows. The infinitesimal symmetry $F_{\overline{\xi}}$ is as above.  To describe $\Theta_{(\overline{\xi},\alpha)}$, we define for each $U\subset M$ and $(Q,\phi)\in \calG(U)$, a map of $\Omega^1(U)$-torsors $\Theta_{(\overline{\xi},\alpha)}(Q):L_{-\xi}[\calA(Q)] \to -d[F_{\overline{\xi}}(Q)]$. 
 To that end, since $\delta \iota_{{\mathbf{p}}} \gamma =0$, there exists a function  $g \in C^\infty(X)$ satisfying $\iota_{{\mathbf{p}}} \gamma = \delta g$, and we may set 
\[
\widehat{\xi}_g= \mathrm{Lift}_{\gamma_Q}({\mathbf{x}}) + (\pi_Q^*g|_{X_U}) \genz \in F_{\overline{\xi}}(Q).
\]
Since $\delta(\alpha-\iota_{{\mathbf{x}}}\, B - dg) =0$,  there exists a unique 1-form $\varepsilon_g \in \Omega^1(M)$ such that $\pi^*\varepsilon_g = \alpha-\iota_{{\mathbf{x}}}\, B - dg$. Now for $[(\gamma_Q,\tau)] \in L_{-\xi}[\calA(Q)]$, let
\[
\Theta_{(\overline{\xi},\alpha)}(Q)[(\gamma_Q,\tau)] = [(\widehat{\xi}_g,\tau - \iota_{\xi} K(\gamma_Q) - \varepsilon_g)].
\]
Observe that the above map is independent of the choice of $g$: any other such choice $g'$ must be of the form $g'=g+\pi^*h$ for some $h\in C^\infty(M)$, which yields $\varepsilon_{g'}=\varepsilon_g - dh$, and $\widehat{\xi}_{g'} = \widehat{\xi}_g + (\pi^*_Q \pi^* h|_{X_U}) \genz$. Hence,
\(
(\widehat{\xi}_{g'},\tau - \iota_{\xi} K(\gamma_Q) - \varepsilon_{g'}) \sim (\widehat{\xi}_{g},\tau - \iota_{\xi} K(\gamma_Q) - \varepsilon_{g}).
\)

To check $\Theta_{(\overline{\xi},\alpha)}(Q)$ is well-defined, 
\begin{align*}
 \Theta_{(\overline{\xi},\alpha)}(Q)&[(\gamma_Q+\pi_Q^*\pi^*\nu,\tau+L_{\xi}\nu)] \\
 &=  [(\widehat{\xi}_g - \pi_Q^*\pi^*\iota_\xi \nu \genz, \tau +L_{\xi}\nu - \iota_{\xi} K(\gamma_Q+\pi_Q^*\pi^*\nu) - \varepsilon_g)] \\
 &=  [(\widehat{\xi}_g , \tau +L_{\xi}\nu - \iota_{\xi} (K(\gamma_Q) + d\nu )- \varepsilon_g -d\iota_{\xi} \nu)] \\
&= \Theta_{(\overline{\xi},\alpha)}(Q)[(\gamma_Q,\tau)].
\end{align*}

A connective lift $(F,\Theta)$ preserves the curving $K$ if for each $(Q,\phi) \in \calG(U)$ and each $\gamma \in \calA(Q)$, 
\[
d\Theta(Q)(\gamma) = -L_{\xi} K(\gamma).
\]
In this context, recall that if we write $\Theta(Q)[(\gamma,0)]=[(\widehat{\xi},\mu)]$, then $d\mu \in \Omega^2(U)$ is well-defined, and hence we set $d\Theta(Q)(\gamma) = d\mu$. 
Therefore, continuing from the above discussion, the connective lift $(F_{\overline{\xi}},\Theta_{(\overline{\xi},\alpha)})$ associated to $({\mathbf{x}},{\mathbf{p}}; \alpha) \in \mathbb{X}(\mathbf{P};B, \gamma)$  preserves the curving $K$ if 
$ d(- \iota_{\xi} K(\gamma_Q) - \varepsilon_{g}) = -L_{\xi}K(\gamma_Q) $, or equivalently $d\varepsilon_g~=~\iota_\xi dK(\gamma_Q)$. Since $dK(\gamma_Q) = -\chi|_U$, we see that $(F_{\overline{\xi}},\Theta_{(\overline{\xi},\alpha)})$ preserves the curving if $\xi$ is Hamiltonian, with Hamiltonian 1-form $\varepsilon_g$.

A morphism $(F,\Theta) \Rightarrow (F',\Theta')$ of connective lifts is a so-called \emph{connective equivalence}; i.e., a morphism of $F\Rightarrow F'$ of the underlying infinitesimal symmetries that is compatible with $\Theta$ and $\Theta'$. A morphism of connective lifts that preserve the curving is simply a morphism of the underlying connective lifts. (\emph{cf}.\  \cite[Definition 4.1.16]{collier-PhDThesis2012}). 

For a morphism $a:({\mathbf{x}},{\mathbf{p}};\alpha) \to ({\mathbf{x}'},{\mathbf{p}'};\alpha')$ of multiplicative vector fields preserving the connective structure, we may associate a morphism $T_a:F_{\overline{\xi}} \Rightarrow F_{\overline{\zeta}}$,
as in the proof of Theorem \ref{t:equiv-mvf-col}. It is straightforward to see that $T_a$ is compatible with $\Theta_{(\overline{\xi},\alpha)}$ and $\Theta_{(\overline{\zeta},\beta)}$. In this context, compatibility amounts to the commutativity of the following diagram,
\[
\xymatrix{
L_{-\xi}[\calA(Q)] \ar[r]^-{\Theta_{(\overline{\xi},\alpha)}} \ar[rd]_-{\Theta_{(\overline{\zeta},\beta)}} & -d[F_{\overline{\xi}}(Q)] \ar[d]^-{-d[T_a]} \\
& -d[F_{\overline{\zeta}}(Q)]
}
\]
Before this is verified, we first choose $g\in C^\infty(X)$ satisfying $\iota_{{\mathbf{p}}} \gamma = \delta g$, and set $h=g-\mathsf{v}_a$. Then, it is easy to see that $\varepsilon_g = \varepsilon_h \in \Omega^1(M)$ (as defined earlier). Moreover, by Lemma \ref{l:algebroid} (3), we have $\widehat{\zeta}_h = \widehat{\xi}_g+ \widehat{a}$. Therefore,
\begin{align*}
-d[T_a] \circ \Theta_{(\overline{\xi},\alpha)} [(\gamma_Q,\tau)] &=-d [T_a] [(\widehat{\xi}_g,\tau - \iota_{\xi} K(\gamma_Q) - \varepsilon_g)] \\
 &= [(T_a(\widehat{\xi}_g),\tau - \iota_{\xi} K(\gamma_Q) - \varepsilon_g)]  \\
 &= [(\widehat{\zeta}_h,\tau - \iota_{\xi} K(\gamma_Q) - \varepsilon_h)] \\
 &= \Theta_{(\overline{\zeta},\beta)} [(\gamma_Q,\tau)],
\end{align*}
as required.

Similar to Theorem \ref{t:equiv-mvf-col}, we see that the above discussion  describes a functor $\mathbb{X}(\mathbf{P}; \gamma) \to \mathcal{L}_{(\calG,\calA)}$, which is an equivalence of categories.

\begin{theorem} \label{t:equiv-mvf-col-withconnex}
Let $\calG=(X,P,\mu)$ be a bundle gerbe over $M$. There is an equivalence of categories $\mathbb{X}(\mathbf{P}; \gamma) \to \mathcal{L}_{(\calG,\calA)}$, which restricts to an equivalence of categories $\mathbb{X}(\mathbf{P}; B, \gamma) \to \mathcal{L}_{(\calG,\calA, K)}$.
\end{theorem}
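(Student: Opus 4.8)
The functor of the statement was in effect constructed in the discussion preceding it: on objects it sends $(\widetilde{\xi},\check{\xi};\alpha)$ to the connective lift $(F_{\overline{\xi}},\Theta_{(\overline{\xi},\alpha)})$, on morphisms it sends $a\in\Gamma(A_P)$ to $T_a$, and it was already checked there that $\Theta_{(\overline{\xi},\alpha)}$ is well defined and that $T_a$ is a connective equivalence. The plan is to verify this functor is essentially surjective, full and faithful, and then that it carries the subcategory $\mathbb{X}(\mathbf{P};B,\gamma)$ onto $\mathcal{L}_{(\calG,\calA,K)}$.

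Faithfulness follows from that of the functor of Theorem \ref{t:equiv-mvf-col}: both $\mathbb{X}(\mathbf{P};\gamma)\to\mathbb{X}(\mathbf{P})$ and $\mathcal{L}_{(\calG,\calA)}\to\mathcal{L}_{\calG}$ are faithful — a morphism in $\mathbb{X}(\mathbf{P};\gamma)$ is just a section of $A_P$, and a connective equivalence is in particular a morphism of the underlying infinitesimal symmetries — so $T_a=T_{a'}$ in $\mathcal{L}_{(\calG,\calA)}$ forces $a=a'$. Essential surjectivity can be taken from the connective analogue of Collier's classification (the counterpart of \cite[Corollary 6.1.8 (2)]{collier2011infinitesimal} invoked in Theorem \ref{t:equiv-mvf-col}); alternatively, starting from $(F,\Theta)\in\mathcal{L}_{(\calG,\calA)}$ lifting $\xi\in\mathfrak{X}(M)$, Theorem \ref{t:equiv-mvf-col} supplies $\overline{\xi}=(\widetilde{\xi},\check{\xi})$ with $F_{\overline{\xi}}\cong F$; transporting $\Theta$ along this isomorphism, evaluating the resulting modification on a local trivialization with connection $(Q,\gamma_Q)$ over $U$, and subtracting the canonical summand $-\iota_\xi K(\gamma_Q)$ produces local 1-forms that patch to some $\alpha\in\Omega^1(X)$ with $L_{\check{\xi}}\gamma=\delta\alpha$, after which one checks $\Theta\cong\Theta_{(\overline{\xi},\alpha)}$.

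For fullness, let $T$ be a connective equivalence $(F_{\overline{\xi}},\Theta_{(\overline{\xi},\alpha)})\Rightarrow(F_{\overline{\zeta}},\Theta_{(\overline{\zeta},\beta)})$. Forgetting the connective data, fullness in Theorem \ref{t:equiv-mvf-col} yields $a\in\Gamma(A_P)$ with $(\widetilde{a},\check{a})=(\widetilde{\zeta}-\widetilde{\xi},\check{\zeta}-\check{\xi})$ and $T=T_a$; it remains to see $a$ is a morphism in $\mathbb{X}(\mathbf{P};\gamma)$, i.e.\ $\beta-\alpha=\iota_{\widetilde{a}}B-d\mathsf{v}_a$, and in $\mathbb{X}(\mathbf{P};B,\gamma)$ by Corollary \ref{c:subcat} when the endpoints lie there. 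Reading the compatibility computation of the paragraph before the statement backwards — it rests on $\iota_{\check{a}}\gamma=-\delta\mathsf{v}_a$ (Lemma \ref{l:v_a}), on Lemma \ref{l:algebroid} (3) applied to the local trivializations, and on the exactness \eqref{eq:deltaexact} to pin down the auxiliary function $g$ up to a basic function — shows that $-d[T_a]\circ\Theta_{(\overline{\xi},\alpha)}=\Theta_{(\overline{\zeta},\beta)}$ holding for all $(Q,\phi)$ is equivalent to precisely $\beta-\alpha=\iota_{\widetilde{a}}B-d\mathsf{v}_a$, giving the claim and hence the equivalence $\mathbb{X}(\mathbf{P};\gamma)\to\mathcal{L}_{(\calG,\calA)}$.

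For the restriction, a morphism of $\mathcal{L}_{(\calG,\calA,K)}$ is a morphism of the underlying connective lifts, and the same holds for $\mathbb{X}(\mathbf{P};B,\gamma)\hookrightarrow\mathbb{X}(\mathbf{P};\gamma)$, so only objects need matching. The discussion preceding the statement identifies, running the relevant equalities in either direction, curving-preservation of $(F_{\overline{\xi}},\Theta_{(\overline{\xi},\alpha)})$ — namely $d\varepsilon_g=\iota_\xi dK(\gamma_Q)$ with $dK(\gamma_Q)=-\chi|_U$ — with the defining condition $L_{\widetilde{\xi}}B=d\alpha$ of $\mathbb{X}(\mathbf{P};B,\gamma)$, via $\pi^*\varepsilon_g=\alpha-\iota_{\widetilde{\xi}}B-dg$ and $dB=\pi^*\chi$. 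Since curving-preservation depends only on the connective-equivalence class (the shift $-d[T_a]$ does not affect the $\Omega^2$-component entering $d\Theta$), combining this with essential surjectivity of the big functor yields essential surjectivity of $\mathbb{X}(\mathbf{P};B,\gamma)\to\mathcal{L}_{(\calG,\calA,K)}$, hence an equivalence. I expect the only real obstacle to be the essential-surjectivity step: producing one global 1-form $\alpha\in\Omega^1(X)$ out of the modification $\Theta$ and checking $L_{\check{\xi}}\gamma=\delta\alpha$ independently of the local trivializations chosen — or, if one simply cites Collier's classification, the careful dictionary between the two sets of definitions.
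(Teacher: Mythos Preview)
Your proof is correct and follows essentially the same plan as the paper's: reduce faithfulness to Theorem \ref{t:equiv-mvf-col}, handle the restriction to curving-preserving objects via the invariance of curving-preservation under connective equivalence, and obtain essential surjectivity by first transporting $\Theta$ along an isomorphism $F\Rightarrow F_{\overline{\xi}}$ supplied by Theorem \ref{t:equiv-mvf-col}. The paper declares full and faithful ``straightforward'' without further comment, so your backward reading of the compatibility computation for fullness is more explicit than what the paper provides.

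The one substantive difference is in how essential surjectivity is finished. After transporting to a connective lift $(F_{\overline{\xi}},\Theta')$, the paper does not attempt to extract $\alpha$ locally from $\Theta'$ as in your option 2. Instead it observes that $\alpha_g=\iota_{\widetilde{\xi}}B+dg$ (for any $g$ with $\delta g=\iota_{\check{\xi}}\gamma$) already gives one object of $\mathbb{X}(\mathbf{P};\gamma)$ over $\overline{\xi}$, and that the set of all such $\alpha$ is an $\Omega^1(M)$-torsor; on the other side, Collier's \cite[Corollary 10.46]{collier2011infinitesimal} shows the connective lifts extending $F_{\overline{\xi}}$ also form an $\Omega^1(M)$-torsor. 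The assignment $\alpha\mapsto\Theta_{(\overline{\xi},\alpha)}$ is then a map of torsors, hence a bijection, so $\Theta'$ is in its image. This torsor argument cleanly sidesteps the patching issue you correctly flagged as the potential obstacle in your direct approach.
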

\begin{proof}
That the functor $\mathbb{X}(\mathbf{P}; \gamma) \to \mathcal{L}_{(\calG,\calA)}$ is full and faithful is straightforward. It remains to show  it is essentially surjective. 

Let $(F,\Theta)$ be a connective lift of some vector field $\xi$ on $M$. By Theorem \ref{t:equiv-mvf-col}, there exists a multiplicative vector field $\overline{\xi}=({\mathbf{x}},{\mathbf{p}})$ and an isomorphism $S:F\Rightarrow F_{\overline{\xi}}$. Moreover, setting $\Theta' = (S * \id_{-d[\, ]}) \circ \Theta$, we  extend $F_{\overline{\xi}}$ to a connective lift $(F_{\overline{\xi}}, \Theta')$, which is  isomorphic to $(F,\Theta)$. 

Let $g \in C^{\infty}(X)$  satisfy $\delta g = \iota_{\mathbf{p}} \gamma$, and consider $\alpha_g = \iota_{{\mathbf{x}}} B + dg \in \Omega^1(X)$. It is easy to see that $L_{{\mathbf{p}}} \gamma = \delta \alpha_g$, and that the collection of $\alpha$ for which $({\mathbf{x}},{\mathbf{p}};\alpha) \in \mathbb{X}(\mathbf{P};\gamma)$ is a $\Omega^1(M)$-torsor. Since $(F_{\overline{\xi}},\Theta_{(\overline{\xi},\alpha_g)})$ is a connective lift extending $F_{\overline{\xi}}$, and the set of such connective lifts is also a $\Omega^1(M)$-torsor (by \cite[Corollary 10.46]{collier2011infinitesimal}), $(F_{\overline{\xi}}, \Theta')$ is isomorphic to some $(F_{\overline{\xi}},\Theta_{(\overline{\xi},\alpha)})$, as required.

The statement about restricting the functor follows from the observation that if there exists $(F,\Theta) \Rightarrow (F',\Theta')$, then $(F,\Theta)$ preserves the curving iff $(F',\Theta')$ does (\emph{cf}.\ \cite[Def.\ 4.1.16]{collier-PhDThesis2012}).
\end{proof}

\section{Lie 2-algebras and 2-plectic geometry} \label{s:lie2}

Following a review of the bicategory of Lie 2-algebras, we introduce the main examples of interest in this paper---namely, the Poisson Lie 2-algebra of a 2-plectic manifold and the connection-preserving multiplicative vector fields on (i.e., infinitesimal symmetries of) a bundle gerbe (Proposition \ref{p:multivfLie2}). In Theorem \ref{p:3curv_obs}, we show that the horizontal lift of vector fields determined by a connective structure fails to be a morphism of Lie 2-algebras, unless its 3-curvature vanishes.

\subsection{Review of Lie 2-algebras} \label{ss:Lie2defs}

In this Section we review Lie 2-algebras and  Noohi's bicategory $\LieButterfly$ of 2-term $L_\infty$-algebras, with butterflies as 1-arrows, and morphisms of butterflies as 2-arrows \cite{noohi2013integrating}. 
As shown in \emph{op.\ cit.}, $\LieButterfly$ is the localization of the bicategory    $\LieBC$ of 2-term $L_\infty$-algebras defined by Baez and Crans \cite{baez2004higher} with respect to quasi-isomorphisms.  
We begin with a brief review of $\LieBC$, using definitions  from \cite{rogers2011higher}; see also \cite[Lemma 4.3.3]{baez2004higher}.

\begin{definition}  \label{d:lietwo}
A \emph{Lie 2-algebra} is a 2-term chain complex $\VV=[V_1 \stackrel{d}{\longrightarrow}V_0]$ equipped with a skew-symmetric chain map $[-,-]: \VV \otimes \VV \to \VV$ and a skew symmetric chain homotopy $J:\VV^{\otimes 3} \to \VV$, called the \emph{Jacobiator}, from the chain map 
\[
\VV^{\otimes 3} \to \VV, \quad x\otimes y \otimes z \mapsto [x,[y,z]],
\]
to the chain map 
\[
\VV^{\otimes 3} \to \VV, \quad x\otimes y \otimes z \mapsto [[x,y],z] + [y,[x,z]],
\]
satisfying 
\begin{align*}
[x, & J(y,z,w)]  + J(x,[y,z],w) + J(x,z,[y,w]) + [J(x,y,z),w] 
+ [z,J(x,y,w)] \\
&= J(x,y,[z,w]) + J([x,y],z,w) + [y,J(x,z,w)] +J(y,[x,z],w) + J(y,z,[x,w]).
\end{align*}
\end{definition}

A Lie 2-algebra with vanishing Jacobiator is called a strict Lie-2-algebra.

\begin{definition} \label{d:lietwomap}
A \emph{morphism of Lie 2-algebras} $\mathsf{F}:\VV \to \WW$ consists of a chain map $\mathsf{F}_\bullet:V_\bullet \to W_\bullet$ together with a chain homotopy $F:\VV\otimes \VV \to \WW$, from the chain map
\[
\VV \otimes \VV \to \WW \quad x\otimes y \mapsto \mathsf{F}([x,y])
\]
to the chain map
\[
\VV \otimes \VV \to \WW \quad x\otimes y \mapsto [\mathsf{F}(x),\mathsf{F}(y)],
\]
satisfying
\begin{align} \label{eq:morph_Jac_ch}
\mathsf{F}_1(J(x,y,z)) - J(\mathsf{F}_0(x),&\mathsf{F}_0(y),\mathsf{F}_0(z)) = F(x,[y,z]) - F([x,y],z) - F(y,[x,z]) \nonumber \\
&-[F(x,y),\mathsf{F}_0(z)] + [\mathsf{F}_0(x),F(y,z)] - [\mathsf{F}_0(y),F(x,z)].
\end{align}
\end{definition}

For completeness,  we note that given a pair of morphisms of Lie 2-algebras, $\mathsf{F}, \mathsf{G}:\VV \to \WW$, a \emph{2-morphism} between $\mathsf{F}$ and $\mathsf{G}$  is a chain homotopy between them that is compatible with the underlying chain homotopies $F$ and $G$.   The 2-category of Lie 2-algebras with the above morphisms and 2-morphisms is denoted by $\LieBC$ \cite{baez2004higher}.  

We will work with the bicategory $\LieButterfly$ defined by Noohi \cite{noohi2013integrating}, which we recall next.  The objects remain the same; however, the arrows are replaced with butterflies.

\begin{definition} \label{d:butterfly} 
A \emph{butterfly} $\mathsf{E}:\VV \dashto \WW$ of Lie 2-algebras is a vector space $E$ equipped with a skew-symmetric bracket $[-,-]$, together with a commutative diagram
\[
\xymatrix@R=1em{
V_1 \ar[dd] \ar[dr]^-{\kappa} 	& 	& W_1 \ar[dd] \ar[dl]_-\lambda \\
	& E\ar[dl]^-\sigma \ar[dr]_-\rho	&	\\
V_0	&	& W_0
}
\]
such that:
\begin{itemize}
\item
$\rho\circ \kappa =0$ and the NE--SW sequence
\(
0 \to W_1 \to E \to V_0 \to 0
\)
is short exact;
\item
 $\rho$ and $\sigma$ preserve brackets;
 \item 
 for every $a\in E$, $u\in W_1$, $x\in V_1$, 
 \[
 [a,\lambda(u)] = \lambda([\rho(a),u]), \quad \text{and} \quad [a, \kappa(x)] = \kappa([\sigma(a),x]);
 \]
 \item
 for every $a,b,c\in E$,
 \[
 \lambda J(\rho(a),\rho(b),\rho(c)) + \kappa J(\sigma(a),\sigma(b),\sigma(c)) = [a,[b,c]] + [b,[c,a]] + [c,[a,b]].
 \]
\end{itemize}
\end{definition}

\begin{definition} \label{d:lietwo2arrow}
Given a pair of butterflies $\mathsf{E},\mathsf{F}:\VV\dashto \WW$, a \emph{morphism of butterflies} $\varphi: \mathsf{E} \Rightarrow \mathsf{F}$ is a linear map $E \to F$ that commutes with the brackets and all the structure maps of the butterflies. (As noted in \cite{noohi2013integrating}, the linear map in a morphism of butterflies is necessarily an isomorphism.) 
\end{definition}

The composition of butterflies
\(
\xymatrix{
\UU \ar@{-->}[r]^{\mathsf{E}} &\VV \ar@{-->}[r]^{\mathsf{E}'} & \WW
}
\)
is given  by
\[
\xymatrix@R=1em{
U_1 \ar[dd] \ar[dr]^-{(\kappa,0)} 	& 	& W_1 \ar[dd] \ar[dl]_-{(0,\lambda')} \\
	& {E \mathrel{\substack{{V_1}\\\oplus\\{V_0}}} E'} \ar[dl]^-{\sigma \circ \pr} \ar[dr]_-{\rho'\circ \pr}	&	\\
U_0	&	& W_0
}
\]
where $E \mathrel{\substack{{V_1}\\\oplus\\{V_0}}} E'$ denotes the quotient of $E\oplus_{V_0} E'$ by the image of $V_1$ under  $(\lambda,\kappa')$, with bracket defined on components.

As shown in \cite{noohi2013integrating}, Lie 2-algebras with butterflies as 1-arrows and morphism of butterflies as 2-arrows form a bicategory $\LieButterfly$. Moreover, Noohi explicitly defines a functor $\LieBC \to \LieButterfly$, and shows that this descends to an equivalence of categories on the localization of $\LieBC$ with respect to quasi-isomorphisms (i.e., morphisms inducing an isomorphism on homology). In particular, quasi-isomorphisms correspond to invertible butterflies---butterflies whose NW-SE sequence is also short exact.  (Recall that the inverse of an invertible butterfly is obtained by simply `flipping' the butterfly diagram along the vertical axis.)

We will implicitly make use of this functor in what follows---for instance, when writing down 1-arrows of Lie 2-algebras, we may write a morphism of Lie 2-algebras, rather than the corresponding butterfly.  For clarity, however, we will distinguish between the two kinds of 1-arrows by employing either a solid or dashed arrow for morphisms or butterflies, respectively.

\subsection{Lie 2-algebras associated to pre-2-plectic manifolds and bundle gerbes} \label{ss:egsLie2alg}

Lie 2-algebras appear naturally in 2-plectic geometry \cite{rogers20132plectic}, and as infinitesimal symmetries of bundle gerbes \cite{collier2011infinitesimal}. 
This section establishes Theorem \ref{p:3curv_obs} realizing the 3-curvature of a bundle gerbe as the obstruction for the horizontal lift determined by a connective structure to be a Lie 2-algebra morphism.  In Proposition \ref{p:multivfLie2}, we provide a Lie 2-algebra structure on the connection-preserving multiplicative vector fields on a bundle gerbe. For convenience, we also recall the Poisson-Lie 2-algebra associated to a pre-2-plectic manifold, which will be used in Section \ref{s:kost}.

\subsubsection*{Lie 2-algebras of multiplicative vector fields on bundle gerbes} 

As mentioned in Section \ref{ss:mult}, the category $\mathbb{X}(\mathbf{G})$ of multiplicative vector fields on a Lie groupoid $\mathbf{G}=G_1 \toto G_0$  is a (strict) Lie 2-algebra  \cite{berwick2016lie}.  Indeed, viewing $\mathbb{X}(\mathbf{G})$ as a 2-term chain complex 
\[
\Gamma(A) \longrightarrow \mathbb{X}(\mathbf{G})_0,
\]
the bracket of elements in degree 0 is given on components: 
$$
[({\mathbf{x}_0}, {\mathbf{x}_1}), ({\mathbf{x}_0'}, {\mathbf{x}_1'})] = ([{\mathbf{x}_0},{\mathbf{x}_0'}], [{\mathbf{x}_1},{\mathbf{x}_1'}]),
$$
while on $\mathbb{X}(\mathbf{G})_0\otimes \Gamma(A)$, it is given by 
$$
[({\mathbf{x}_0}, {\mathbf{x}_1}),a] = [{\mathbf{x}_1},\overrightarrow{a}]\big|_{G_0}.
$$
 (The bracket on $\Gamma(A)\otimes \mathbb{X}(\mathbf{G})_0$ is determined by skew-symmetry.) Here, recall that for   $a\in \Gamma(A)$, and $({\mathbf{x}_0},{\mathbf{x}_1})$ multiplicative, $[{\mathbf{x}_1},\overrightarrow{a}] \in \ker ds$ and is right-invariant \cite{mackenzie1998classical}, and hence its restriction to $G_0$ defines a section in $\Gamma(A)$.

Therefore, associated to a bundle gerbe $\calG=(X,P,\mu)$ over a manifold $M$, we naturally obtain (strict) Lie 2-algebras $\mathbb{X}(X^{[2]} \toto X)$ and $\mathbb{X}(P\toto X)$ as above. Recall from Proposition \ref{p:curv_measures_bracket_preserve_failure} (1) that a connective structure $(B,\gamma)$ on $\calG$ give rise to a map of multiplicative vector fields $\mathrm{Lift}_\gamma:\mathbb{X}(X^{[2]} \toto X)_0 \to \mathbb{X}(\mathbf{P})_0$, which (\emph{cf}.\ the proof of Proposition \ref{p:properties} (2)) may be completed to a chain map 
\begin{equation} \label{e:chmap}
\mathrm{Lift}_{\gamma,\bullet}:\mathbb{X}(X^{[2]} \toto X)_\bullet \longrightarrow \mathbb{X}(\mathbf{P})_\bullet.
\end{equation}
By Proposition \ref{p:curv_measures_bracket_preserve_failure} (2), the curving $B$ gives rise to a map
\begin{equation} \label{e:chhomotopy}
F_B:\mathbb{X}(X^{[2]} \toto X)_0\times \mathbb{X}(X^{[2]} \toto X)_0 \longrightarrow \Gamma(A_P)
\end{equation}
by setting $F_B({\mathbf{x}},{\mathbf{x}}^{[2]};{\mathbf{x}'},{\mathbf{x}'}^{[2]})=\iota_{{\mathbf{x}}}\iota_{{\mathbf{x}'}}B \genz\big|_X$, which we may view as defining a chain homotopy that controls the failure of $\mathrm{Lift}_\gamma$ to preserve the bracket (as in Definition \ref{d:lietwomap}).
 However, $\mathrm{Lift}_\gamma$ together with $F_B$ fails to be a morphism of Lie 2-algebras (as in Definition \ref{d:lietwomap}). Indeed,  analogous to the horizontal lift for principal $S^1$-bundles, the failure of the horizontal lift to be a morphism of Lie 2-algebras (i.e.,  Condition \eqref{eq:morph_Jac_ch} in Definition \ref{d:lietwomap}) is measured by the 3-curvature of the bundle gerbe, as the next Theorem shows.
 
\begin{theorem} \label{p:3curv_obs}
Let $(\calG,B,\gamma)$ be a bundle gerbe over $M$ with connective structure and 3-curvature $\chi$. The horizontal lift  $\mathrm{Lift}_\gamma$ and chain homotopy $F_B$ from \eqref{e:chmap} and \eqref{e:chhomotopy} define a Lie 2-algebra morphism if and only if $\chi=0$. 
\end{theorem}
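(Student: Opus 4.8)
The plan is to reduce everything to the coherence condition \eqref{eq:morph_Jac_ch} of Definition \ref{d:lietwomap}, compute both sides of it explicitly, and recognize the difference as a pullback of the contraction of the $3$-curvature $\chi$ with three vector fields. First I would recall from the discussion preceding the theorem that $\mathrm{Lift}_{\gamma,\bullet}$ in \eqref{e:chmap} is a chain map and that, by Proposition \ref{p:curv_measures_bracket_preserve_failure}(2), the map $F_B$ of \eqref{e:chhomotopy} is a chain homotopy of the required type; hence $(\mathrm{Lift}_{\gamma,\bullet},F_B)$ is a morphism of Lie $2$-algebras precisely when \eqref{eq:morph_Jac_ch} holds on all triples of objects. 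Since both $\mathbb{X}(X^{[2]}\toto X)$ and $\mathbb{X}(\mathbf{P})$ are \emph{strict} Lie $2$-algebras, both Jacobiators vanish and the left-hand side of \eqref{eq:morph_Jac_ch} is zero; so the content of the theorem is exactly the vanishing (or not) of the right-hand side. Writing objects of $\mathbb{X}(X^{[2]}\toto X)_0$ as $x=(\widetilde{\xi},\widetilde{\xi}^{[2]})$, $y=(\widetilde{\zeta},\widetilde{\zeta}^{[2]})$, $z=(\widetilde{\eta},\widetilde{\eta}^{[2]})$ for lifts $\widetilde{\xi},\widetilde{\zeta},\widetilde{\eta}\in\mathfrak{X}(X)$ of $\xi,\zeta,\eta\in\mathfrak{X}(M)$ (these exhaust $\mathbb{X}(X^{[2]}\toto X)_0$, cf.\ Section \ref{ss:mult}), the goal becomes to show that the right-hand side of \eqref{eq:morph_Jac_ch} evaluated on $(x,y,z)$ vanishes for all such triples if and only if $\chi=0$.

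Next I would compute the six terms of that right-hand side. The three terms $F_B(x,[y,z])$, $F_B([x,y],z)$, $F_B(y,[x,z])$ are immediate from the definition of $F_B$ and the componentwise bracket on $\mathbb{X}(X^{[2]}\toto X)_0$, contributing $\iota_{\widetilde{\xi}}\iota_{[\widetilde{\zeta},\widetilde{\eta}]}B$, $\iota_{[\widetilde{\xi},\widetilde{\zeta}]}\iota_{\widetilde{\eta}}B$ and $\iota_{\widetilde{\zeta}}\iota_{[\widetilde{\xi},\widetilde{\eta}]}B$, each times $\genz\big|_X$. For the mixed brackets I would use the formula $[(\widetilde{\xi},\check{\xi}),a]=[\check{\xi},\overrightarrow{a}]\big|_X$ from Section \ref{ss:multivfLie2}: for a section $a=g\,\genz\big|_X\in\Gamma(A_P)$ one has $\overrightarrow{a}=(g\circ t)\,\genz$ (as follows from Lemma \ref{l:v_a} and the multiplicativity of $\gamma$), and then the $S^1$-invariance of $\check{\xi}=\mathrm{Lift}_\gamma\widetilde{\xi}^{[2]}$ (Proposition \ref{p:multvf}(1)) together with $\check{\xi}\sim_t\widetilde{\xi}$ gives $[\mathrm{Lift}_\gamma(x),F_B(y,z)]=\bigl(\widetilde{\xi}(\iota_{\widetilde{\zeta}}\iota_{\widetilde{\eta}}B)\bigr)\genz\big|_X$, and likewise $[\mathrm{Lift}_\gamma(y),F_B(x,z)]=\bigl(\widetilde{\zeta}(\iota_{\widetilde{\xi}}\iota_{\widetilde{\eta}}B)\bigr)\genz\big|_X$, while $[F_B(x,y),\mathrm{Lift}_\gamma(z)]$ is handled by skew-symmetry of the bracket.

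Substituting these six terms into the right-hand side of \eqref{eq:morph_Jac_ch} and collecting, one obtains $\bigl(\text{an alternating sum of Lie derivatives and interior products of }B\text{ by }\widetilde{\xi},\widetilde{\zeta},\widetilde{\eta}\bigr)\genz\big|_X$, which is precisely (up to an overall sign) Koszul's invariant formula for $dB$ evaluated on $\widetilde{\xi},\widetilde{\zeta},\widetilde{\eta}$; that is, it equals $-\,(dB)(\widetilde{\xi},\widetilde{\zeta},\widetilde{\eta})\,\genz\big|_X$. Using $dB=\pi^*\chi$ (Definition \ref{d:connex}), this is $-\,\pi^*\bigl(\iota_{\eta}\iota_{\zeta}\iota_{\xi}\chi\bigr)\,\genz\big|_X\in\Gamma(A_P)$, which vanishes if and only if $\iota_{\eta}\iota_{\zeta}\iota_{\xi}\chi=0$ on $M$ (since $\pi$ is surjective and $\genz$ is nowhere zero). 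As $\widetilde{\xi}$ can be chosen to lift an arbitrary $\xi\in\mathfrak{X}(M)$, the condition \eqref{eq:morph_Jac_ch} holds for all triples of objects iff $\iota_{\eta}\iota_{\zeta}\iota_{\xi}\chi=0$ for all $\xi,\zeta,\eta\in\mathfrak{X}(M)$, i.e.\ iff $\chi=0$; and conversely $\chi=0$ trivially makes the right-hand side vanish. This establishes both implications.

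The step I expect to be the main obstacle is purely bookkeeping: pinning down the bracket $[\check{\xi},\overrightarrow{a}]$ and its restriction to $X$ with consistent sign conventions (the generating vector field $\genz$, the direction of the right-invariant lift $\overrightarrow{(\,\cdot\,)}$, and the orientation of the simplicial differential $\delta$ all enter), and then matching the resulting six-term expression against the invariant formula for $dB$. None of this is conceptually deep, but it is where a sign slip would most easily occur; fortunately an overall sign does not affect the final equivalence.
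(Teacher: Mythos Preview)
Your proposal is correct and follows essentially the same route as the paper: reduce to the coherence condition \eqref{eq:morph_Jac_ch}, use strictness to kill both Jacobiators, compute the six remaining terms (the three $F_B$ terms directly and the three mixed brackets via $[\check{\xi},\overrightarrow{a}]\big|_X$ for vertical $a$), and recognize the result as the Cartan/Koszul formula for $dB(\widetilde{\xi},\widetilde{\zeta},\widetilde{\eta})=\pi^*\chi(\xi,\zeta,\eta)$. The only cosmetic difference is that the paper extracts the vertical component of $[\check{\xi}_i,\overrightarrow{a}_{jk}]$ by pairing with $\gamma$ (using that $\check{\xi}_i$ is horizontal and $\overrightarrow{a}_{jk}$ is vertical), whereas you compute $\overrightarrow{a}=(g\circ t)\genz$ explicitly and use $S^1$-invariance of $\check{\xi}$; both yield $L_{\widetilde{\xi}_i}\iota_{\widetilde{\xi}_j}\iota_{\widetilde{\xi}_k}B$.
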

 \begin{proof}
Following the paragraph preceding the Proposition, it remains to consider Condition \eqref{eq:morph_Jac_ch} from Definition \ref{d:lietwomap}. To that end, consider vector fields $\xi_1$, $\xi_2$, and $\xi_3$ on $M$ with corresponding lifts  ${\mathbf{x}}_1$, ${\mathbf{x}}_2$, and ${\mathbf{x}}_3$ on $X$. Write ${\overline{\mathbf{x}}} = ({\mathbf{x}},{\mathbf{x}}^{[2]})$. 

Since the Lie 2-algebras under consideration are strict (i.e., with vanishing Jacobiator), $\mathrm{Lift}_\gamma$ and $F_B$ will define a Lie 2-algebra morphism if and only if
\begin{align}
0=&
F_B({\overline{\mathbf{x}}}_1,[{\overline{\mathbf{x}}}_2,{\overline{\mathbf{x}}}_3]) - F_B([{\overline{\mathbf{x}}}_1,{\overline{\mathbf{x}}}_2],{\overline{\mathbf{x}}}_3) - F_B({\overline{\mathbf{x}}}_2,[{\overline{\mathbf{x}}}_1,{\overline{\mathbf{x}}}_3]) \nonumber \\
& - [F_B({\overline{\mathbf{x}}}_1,{\overline{\mathbf{x}}}_2), \mathrm{Lift}_\gamma({\overline{\mathbf{x}}}_3)] + [ \mathrm{Lift}_\gamma({\overline{\mathbf{x}}}_1) , F_B({\overline{\mathbf{x}}}_2,{\overline{\mathbf{x}}}_3)] -  [ \mathrm{Lift}_\gamma({\overline{\mathbf{x}}}_2) , F_B({\overline{\mathbf{x}}}_1,{\overline{\mathbf{x}}}_3)]. \label{eq:dB1}
\end{align}
Since the image of $F_B$ consists of vertical sections of $A_P$, each of the last three terms above is also a vertical section. To determine the vertical components of these terms, let $a_{jk}=F_B({\overline{\mathbf{x}}}_j,{\overline{\mathbf{x}}}_k)$, and ${\mathbf{p}}_i = \mathrm{Lift}_\gamma({\overline{\mathbf{x}}}_i)$, and observe that
\(
\iota_{[{\mathbf{p}}_i,\overrightarrow{a}_{jk}]} \gamma = L_{{\mathbf{p}}_i} \iota_{\overrightarrow{a}_{jk}} \gamma ,
\)
since ${\mathbf{p}}_i$ is horizontal and $\overrightarrow{a}_{jk}$ is vertical. Restricting to $X$, we see that 
\[
\iota_{[{\mathbf{p}}_i,\overrightarrow{a}_{jk}]} \gamma \big|_X = 
L_{{\mathbf{x}}_i} \iota_{{\mathbf{x}}_j} \iota_{{\mathbf{x}}_k} B
\]
Therefore, taking vertical components in \eqref{eq:dB1}, we obtain
\begin{align*}
0&=
\iota_{{\mathbf{x}}_1} \iota_{[{\mathbf{x}}_2,{\mathbf{x}}_3]} B - \iota_{[{\mathbf{x}}_1,{\mathbf{x}}_2]} \iota_{{\mathbf{x}}_3} B - \iota_{{\mathbf{x}}_2} \iota_{[{\mathbf{x}}_1,{\mathbf{x}}_3]} B + L_{{\mathbf{x}}_3} \iota_{{\mathbf{x}}_1} \iota_{{\mathbf{x}}_2} B +
L_{{\mathbf{x}}_1} \iota_{{\mathbf{x}}_2} \iota_{{\mathbf{x}}_3} B -
L_{{\mathbf{x}}_2} \iota_{{\mathbf{x}}_1} \iota_{{\mathbf{x}}_3} B \\
&=\iota_{{\mathbf{x}}_1} \iota_{{\mathbf{x}}_2} \iota_{{\mathbf{x}}_3} dB
\end{align*}
Hence Condition \eqref{eq:morph_Jac_ch} holds if and only if $\pi^*\chi=dB=0$, which completes the proof, since $\pi:X\to M$ is a submersion.
\end{proof}

\begin{remark} \label{r:3curvCollier}
The interpretation of the 3-curvature as in Theorem \ref{p:3curv_obs} essentially appears in Collier \cite[Section 3.6]{collier-PhDThesis2012} from the stacky perspective, by appealing to the \v{C}ech data implementation of the  category of \emph{infinitesimal symmetries} of $S^1$-gerbes (\emph{cf}.\ Remark \ref{r:Lie2Cechdata_multvf} below). 
\eoe
\end{remark}

The category $\mathbb{X}(\mathbf{P};B,\gamma)$ from Corollary \ref{c:subcat} of connection-preserving multiplicative vector fields is  a 2-vector space.  Proposition \ref{p:multivfLie2} below shows it is a Lie 2-algebra, whose brackets are inherited from $\mathbb{X}(\mathbf{P})$.

\begin{lemma} \label{l:vbracket}
Let $\calG=(X,P,\mu)$ be a bundle gerbe over $M$ with connective structure $(B,\gamma)$, and $P\toto X$ the associated Lie groupoid.
Let $({\mathbf{x}},{\mathbf{p}};\alpha) \in \mathbb{X}(\mathbf{P};B,\gamma)_0$  and $a \in \Gamma(A_P)$. Then $ \mathsf{v}_{[{\mathbf{p}},\overrightarrow{a}]\big|_X}  =L_{{\mathbf{x}}} \mathsf{v}_a +  \iota_{{\mathbf{a}}}  \alpha$.
\end{lemma}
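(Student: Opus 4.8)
The plan is to expand $\iota_{[\check\xi,\overrightarrow a]}\gamma$ using the Cartan calculus and evaluate the result along the units, carefully tracking the source and target maps of $P\toto X$. First I would note that, as recalled in Section~\ref{ss:multivfLie2}, $[\check\xi,\overrightarrow a]$ is a right-invariant vector field on $P$, hence it is the right-invariant extension $\overrightarrow b$ of $b:=[\check\xi,\overrightarrow a]\big|_X\in\Gamma(A_P)$; therefore $\mathsf{v}_b=\epsilon^*\iota_{\overrightarrow b}\gamma=\epsilon^*\iota_{[\check\xi,\overrightarrow a]}\gamma$. The identity $\iota_{[\check\xi,\overrightarrow a]}\gamma=L_{\check\xi}\bigl(\iota_{\overrightarrow a}\gamma\bigr)-\iota_{\overrightarrow a}\bigl(L_{\check\xi}\gamma\bigr)$ then reduces the computation to the $\epsilon$-pullbacks of the two summands on the right.

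For the first summand I would reuse the computation inside the proof of Lemma~\ref{l:v_a}, which shows that, as a function on $P$, $\iota_{\overrightarrow a}\gamma=\pi_1^*\mathsf{v}_a$ (recall $\pi_1=t$). Since $(\widetilde\xi,\check\xi)$ is a multiplicative vector field, $\check\xi\sim_{\pi_1}\widetilde\xi$ by Proposition~\ref{p:multvf}(2), so $L_{\check\xi}(\pi_1^*\mathsf{v}_a)=\pi_1^*(L_{\widetilde\xi}\mathsf{v}_a)$; pulling back along $\epsilon$ and using $\pi_1\circ\epsilon=\id_X$ gives $\epsilon^*L_{\check\xi}\bigl(\iota_{\overrightarrow a}\gamma\bigr)=L_{\widetilde\xi}\mathsf{v}_a$.

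For the second summand I would use that $(\widetilde\xi,\check\xi;\alpha)$ preserves the connective structure (Definition~\ref{d:connexpresmultivf}), so $L_{\check\xi}\gamma=\delta\alpha=\pi_0^*\alpha-\pi_1^*\alpha$. The right-invariant vector field $\overrightarrow a$ lies in $\ker d\pi_0$, so $\iota_{\overrightarrow a}\pi_0^*\alpha=0$; and since $t\circ R_p=t$ one has $d\pi_1\bigl(\overrightarrow a(p)\bigr)=\rho\bigl(a(\pi_1(p))\bigr)=\widetilde a(\pi_1(p))$, whence $\iota_{\overrightarrow a}\pi_1^*\alpha=\pi_1^*(\iota_{\widetilde a}\alpha)$. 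Thus $\iota_{\overrightarrow a}\bigl(L_{\check\xi}\gamma\bigr)=-\pi_1^*(\iota_{\widetilde a}\alpha)$ and, again using $\pi_1\circ\epsilon=\id_X$, $\epsilon^*\iota_{\overrightarrow a}\bigl(L_{\check\xi}\gamma\bigr)=-\iota_{\widetilde a}\alpha$. Combining the two summands yields $\mathsf{v}_b=L_{\widetilde\xi}\mathsf{v}_a+\iota_{\widetilde a}\alpha$, which is the assertion.

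I do not expect a genuine obstacle here; the only thing requiring care is the groupoid bookkeeping---distinguishing the behaviour of $\overrightarrow a$ under $d\pi_0$ (annihilated) and under $d\pi_1$ (mapped to $\widetilde a$), remembering that $\pi_0\circ\epsilon=\pi_1\circ\epsilon=\id_X$, and invoking both the $\pi_1$-relatedness of $\check\xi$ and $\widetilde\xi$ and the curving-preservation hypothesis $L_{\check\xi}\gamma=\delta\alpha$ at the right moments.
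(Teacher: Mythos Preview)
Your argument is correct and is essentially the same as the paper's proof: both use the Cartan identity $\iota_{[\check\xi,\overrightarrow a]}\gamma=L_{\check\xi}\iota_{\overrightarrow a}\gamma-\iota_{\overrightarrow a}L_{\check\xi}\gamma$, the relatedness facts $\overrightarrow a\sim_{\pi_1}\widetilde a$ and $\overrightarrow a\sim_{\pi_0}0$, the identity $\iota_{\overrightarrow a}\gamma=\pi_1^*\mathsf{v}_a$ from Lemma~\ref{l:v_a}, and the hypothesis $L_{\check\xi}\gamma=\delta\alpha$. One small terminological slip: the equation $L_{\check\xi}\gamma=\delta\alpha$ you invoke is the connection-preserving part of Definition~\ref{d:connexpresmultivf}, not the curving-preserving part.
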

\begin{proof} 
First, recall that $\overrightarrow{a} \sim_{\delta_1} {\mathbf{a}}$ and $\overrightarrow{a} \sim_{\delta_0} 0$. Therefore,
%\begin{align*}
\[
\mathsf{v}_{[{\mathbf{p}},\overrightarrow{a}]\big|_X} 
%&
= \epsilon^* \iota_{[{\mathbf{p}},\overrightarrow{a}]} \gamma %\\
%&
= \epsilon^*(L_{{\mathbf{p}}} \iota_{\overrightarrow{a}} \gamma -\iota_{\overrightarrow{a}} L_{{\mathbf{p}}}  \gamma) %\\
%&
= L_{{\mathbf{x}}} \mathsf{v}_a - \epsilon^* \iota_{\overrightarrow{a}} \delta \alpha %\\
%&
= L_{{\mathbf{x}}} \mathsf{v}_a +  \iota_{{\mathbf{a}}}  \alpha .
\]
%\end{align*}
 \end{proof}

\begin{proposition} \label{p:multivfLie2}
Let $\calG=(X,P,\mu)$ be a bundle gerbe over $M$ with connective structure $(B,\gamma)$.  Let $\mathbb{X}(\calG; B,\gamma) = \{V_1 \to V_0 \}$   denote the  2-term complex
\[
\Gamma(A_P) \stackrel{d}{\longrightarrow} \mathbb{X}(\mathbf{P};B,\gamma)_0,
\]
with differential given by
 $da=({\mathbf{a}},{\bar{\mathbf{a}}}; \iota_{{\mathbf{a}}} B - d\mathsf{v}_a)$. 
 Define a bracket on $V_0\otimes V_0$  by
\[
[({\mathbf{x}}, {\mathbf{p}}; \alpha), ({\mathbf{z}}, {\mathbf{r}}; \beta)] = ([{\mathbf{x}},{\mathbf{z}}], [{\mathbf{p}},{\mathbf{r}}]; L_{{\mathbf{x}}}\beta - L_{{\mathbf{z}}}\alpha )
\]
and  on $V_0\otimes V_1$  by
\[
[({\mathbf{x}}, {\mathbf{p}};\alpha),a] = [{\mathbf{p}},\overrightarrow{a}]\big|_{X}.
\]
Then  $\mathbb{X}(\calG; B,\gamma)$ is a strict Lie 2-algebra (i.e., with Jacobiator $J\equiv 0$). 
\end{proposition}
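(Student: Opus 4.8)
The plan is to check, in order, that (i) the stated 2-term complex is well defined, (ii) both brackets land in the prescribed spaces, (iii) together they constitute a skew-symmetric chain map $\mathbb{X}(\calG;B,\gamma)^{\otimes 2}\to\mathbb{X}(\calG;B,\gamma)$, and (iv) the Jacobi identity holds on the nose, so that we may take $J\equiv 0$. The guiding observation is that on the components valued in $\mathbb{X}(\mathbf{P})$ and $\Gamma(A_P)$ everything is inherited verbatim from the strict Lie 2-algebra $\mathbb{X}(\mathbf{P})$ of \cite{berwick2016lie}; the only genuinely new bookkeeping concerns the $\Omega^1(X)$-slot, which is governed by the Cartan calculus together with Lemma \ref{l:vbracket} and Proposition \ref{p:connex_pres_subcat}.

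For (i), the differential $d$ is well defined precisely by Proposition \ref{p:connex_pres_subcat}, since $\iota_{\widetilde a}B - d\mathsf v_a$ is the 1-form exhibited there certifying that $(\widetilde a,\check a)$ preserves $(B,\gamma)$; and $d^2=0$ trivially as $V_2=0$. For (ii), on $V_0\otimes V_0$ the underlying pair $([\widetilde\xi,\widetilde\zeta],[\check\xi,\check\zeta])$ is multiplicative because the bracket of multiplicative vector fields is multiplicative, and
\[
L_{[\widetilde\xi,\widetilde\zeta]}B = [L_{\widetilde\xi},L_{\widetilde\zeta}]B = L_{\widetilde\xi}\,d\beta - L_{\widetilde\zeta}\,d\alpha = d\bigl(L_{\widetilde\xi}\beta - L_{\widetilde\zeta}\alpha\bigr),
\]
while the same computation with $\delta$ in place of $d$, using $L_{\check\xi}\delta = \delta L_{\widetilde\xi}$ on forms on $X$ (a consequence of $\check\xi\sim_{\pi_j}\widetilde\xi$, Proposition \ref{p:multvf}(2)), gives $L_{[\check\xi,\check\zeta]}\gamma = \delta(L_{\widetilde\xi}\beta - L_{\widetilde\zeta}\alpha)$; hence the bracket lands in $\mathbb{X}(\mathbf{P};B,\gamma)_0$. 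The bracket $[(\widetilde\xi,\check\xi;\alpha),a]=[\check\xi,\overrightarrow a]\big|_X$ does not involve $\alpha$ and lies in $\Gamma(A_P)$ because $[\check\xi,\overrightarrow a]$ is right-invariant and tangent to the source fibres \cite{mackenzie1998classical,berwick2016lie}; so this piece is literally the bracket of $\mathbb{X}(\mathbf{P})$.

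For (iii), skew-symmetry is immediate from the formulas, and the chain-map property of a skew-symmetric bracket on a 2-term complex reduces to the two identities $d[x,a]=[x,da]$ (for $x\in V_0$, $a\in V_1$) and $[da,b]+[db,a]=0$ (for $a,b\in V_1$). The latter holds just as in $\mathbb{X}(\mathbf{P})$, since $[\check a,\overrightarrow b]\big|_X=[a,b]$ in $\Gamma(A_P)$. For the former, the $\mathbb{X}(\mathbf{P})$-components are the corresponding chain-map identity in $\mathbb{X}(\mathbf{P})$ (which also identifies the $\widetilde{\phantom{a}}$-component of $[\check\xi,\overrightarrow a]\big|_X$ with $[\widetilde\xi,\widetilde a]$); on the $\Omega^1(X)$-component one must verify
\[
\iota_{[\widetilde\xi,\widetilde a]}B - d\mathsf v_{[\check\xi,\overrightarrow a]|_X} = L_{\widetilde\xi}\bigl(\iota_{\widetilde a}B - d\mathsf v_a\bigr) - L_{\widetilde a}\alpha,
\]
and substituting $\mathsf v_{[\check\xi,\overrightarrow a]|_X}=L_{\widetilde\xi}\mathsf v_a + \iota_{\widetilde a}\alpha$ from Lemma \ref{l:vbracket}, both sides reduce to $\iota_{[\widetilde\xi,\widetilde a]}B - d\iota_{\widetilde a}\alpha - L_{\widetilde\xi}\,d\mathsf v_a$ after applying $[L_{\widetilde\xi},\iota_{\widetilde a}]=\iota_{[\widetilde\xi,\widetilde a]}$, $L_{\widetilde\xi}B=d\alpha$, $L_{\widetilde a}=\iota_{\widetilde a}d+d\iota_{\widetilde a}$ and $dL=Ld$. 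This reconciliation of the 1-form slots is the crux of the argument and the one place where Lemma \ref{l:vbracket} — hence the multiplicativity of $\gamma$ — is used essentially; everything else is bookkeeping.

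For (iv), the Jacobiator is forced to vanish on arguments involving at least two elements of $V_1$, since the relevant pairings are zero. On $V_0\otimes V_0\otimes V_1$ the identity $[x,[y,a]]=[[x,y],a]+[y,[x,a]]$ follows by restricting the Jacobi identity for the vector fields $\check\xi,\check\eta,\overrightarrow a$ on $P$ to $X$ (using $\overrightarrow{[\check\eta,\overrightarrow a]|_X}=[\check\eta,\overrightarrow a]$), exactly as in \cite{berwick2016lie}. On $V_0^{\otimes 3}$ the $\mathbb{X}(\mathbf{P})$-components are the Jacobi identities for vector fields on $X$ and on $P$, and the $\Omega^1(X)$-component collapses, after the evident cancellations among iterated Lie derivatives, to three applications of $[L_Y,L_Z]=L_{[Y,Z]}$, one for each of the three 1-form arguments. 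Assembling (i)--(iv) yields the claimed strict Lie 2-algebra. The only step I expect to require care is (iii): matching the $\Omega^1(X)$-slots in $d[x,a]=[x,da]$, where one has to feed in Lemma \ref{l:vbracket} and then run the Cartan-calculus cancellation carefully.
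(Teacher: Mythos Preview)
Your proposal is correct and follows essentially the same route as the paper's proof: both reduce the chain-map identity $d[x,a]=[x,da]$ on the $\Omega^1(X)$-slot to a Cartan-calculus computation using Lemma \ref{l:vbracket}, and both handle $[da,b]=-[db,a]$ via the commutation of left- and right-invariant vector fields. Your write-up is in fact more explicit than the paper's in two places---verifying that the $V_0\otimes V_0$ bracket lands in $\mathbb{X}(\mathbf{P};B,\gamma)_0$, and spelling out the Jacobiator check on the various argument types---which the paper dismisses as ``straightforward.''
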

\begin{proof}
Let $\VV=\{ V_1 \to V_0\}$ as in the statement of the Proposition. 
A direct calculation verifies that the bracket and differential are well-defined. To show the resulting bracket is compatible with the differential (i.e., that it gives a chain map $\mathbb{V}\otimes \mathbb{V} \to \mathbb{V}$), we must verify that for $({\mathbf{x}}, {\mathbf{p}};\alpha) \in V_0$ and $a,b\in V_1$:
\begin{align}
d\,  [({\mathbf{x}}, {\mathbf{p}};\alpha),a] &= [({\mathbf{x}}, {\mathbf{p}};\alpha),da], \quad \text{and} \label{eq:d1} \\
[d a, b] & =- [d b, a]. \label{eq:d2}
\end{align}
For \eqref{eq:d1}, first recall that $c=([{\mathbf{p}},\overrightarrow{a}]\big|_X)$ defines a section of $A_P$ and that $\mathbf{c}= [{\mathbf{x}},{\mathbf{a}}]$ and that $\bar{\mathbf{c}} = [{\mathbf{p}},{\bar{\mathbf{a}}}]$ (e.g., see the proof of \cite[Theorem 3.4]{berwick2016lie}).  Therefore, using Lemma \ref{l:vbracket},
\begin{align*}
d\,  [({\mathbf{x}}, {\mathbf{p}};\alpha),a] &= ([{\mathbf{x}},{\mathbf{a}}] , [{\mathbf{p}},{\bar{\mathbf{a}}}],\iota_{[{\mathbf{x}},{\mathbf{a}}]} B -d\mathsf{v}_{[{\mathbf{p}},\overrightarrow{a}]\big|_X} ) \\
&=([{\mathbf{x}},{\mathbf{a}}] , [{\mathbf{p}},{\bar{\mathbf{a}}}], 
L_{{\mathbf{x}}} \iota_{{\mathbf{a}}} B -   \iota_{{\mathbf{a}}} L_{{\mathbf{x}}} B - d(L_{{\mathbf{x}}} \mathsf{v}_a +  \iota_{{\mathbf{a}}}  \alpha)) \\
&= ([{\mathbf{x}},{\mathbf{a}}] , [{\mathbf{p}},{\bar{\mathbf{a}}}], 
L_{{\mathbf{x}}}( \iota_{{\mathbf{a}}} B  -  d\mathsf{v}_a) -  L_{{\mathbf{a}}} \alpha) \\
&= [({\mathbf{x}}, {\mathbf{p}};\alpha),da].
\end{align*}

For \eqref{eq:d2}, we first recall that $[{\bar{\mathbf{a}}},\overrightarrow{b}] = [\overrightarrow{a} + \overleftarrow{a},\overrightarrow{b}] = [\overrightarrow{a} ,\overrightarrow{b}]$, since left and right invariant vector fields commute. Therefore,
\[
[da,b] = [{\bar{\mathbf{a}}},\overrightarrow{b}]\big|_X 
=-[\check{b},\overrightarrow{a}]\big|_X 
=-[db,a], 
\]
as required.
The vanishing of the Jacobiator is straightforward.
\end{proof}

 \begin{example} \label{eg:Lie2-trivgerbe}
 Consider the connection-preserving multiplicative vector fields for the trivial bundle gerbe $\calI_\omega$ from Example \ref{eg:multivf_trivgerbe}.  As a Lie 2-algebra $\mathbb{X}(\calI_\omega) =\{W_1\to W_0\}$ has $W_1\cong C^\infty(M)$, since  vectors in the Lie algebroid must necessarily be vertical vectors,  and $ W_0$ consists of pairs $(\xi,A)$ such that $L_\xi \omega=dA$.  The differential $d:W_1\to W_0$ is $f\mapsto (0, -df)$.
 The bracket on $W_0\otimes W_0$ is given by
 \[
 [(\xi_1,A_1),(\xi_2,A_2)] = ([\xi_1,\xi_2],L_{\xi_1}A_2 - L_{\xi_2} A_1),
 \]
 and on $W_0\otimes W_1$ it is given by
 \[
[(\xi,A),f] = \xi(f). 
 \]

Note that there is a morphism of Lie 2-algebras $\mathsf{F}:\mathbb{X}(\calI_\omega) \to \mathbb{L}(M,d\omega)$, which in degree 0 sends $(\xi,A) \mapsto (\xi,\iota_\xi \omega +A)$. A routine calculation shows that $\mathsf{F}$ is a quasi-isomorphism. Theorem \ref{t:invertbutterflymulti-rogers} generalizes this observation.
\eoe
\end{example}

\begin{remark} \label{r:Lie2Cechdata_multvf}
In Example \ref{eg:collier:morphisms}, we saw that the category of multiplicative vector fields $\mathbb{X}(\mathbf{P})$ on a bundle gerbe $P$ given by a \v{C}ech 2-cocycle $g_{ijk}$ recovers the category $\mathcal{L}_{g_{ijk}}$ of infinitesimal symmetries from \cite{collier2011infinitesimal}.  In \emph{op.\ cit.\ }, Collier also shows $\mathcal{L}_{g_{ijk}}$ is a Lie 2-algebra---the Lie 2-algebra structure coincides with $\mathbb{X}(\mathbf{P})$ from \cite{berwick2016lie}.
The analogous statement holds as well for the  category  $\mathcal{L}_{(g_{ijk},B_i,A_{ij})}$ of infinitesimal symmetries preserving the curving and $\mathbb{X}(\calG_{g_{ijk}},\{B_i\},\{A_{ij}\})$.
\eoe
\end{remark}

\subsubsection*{The Poisson-Lie 2-algebra} 

Recall that a \emph{2-plectic manifold} is a manifold equipped with a closed non-degenerate 3-form (in analogy with symplectic manifolds and closed non-degenerate 2-forms).  If we omit the non-degeneracy condition, we say the manifold is \emph{pre}-2-plectic \cite{rogers20132plectic}.

\begin{definition} \label{d:rogers}
Let $(M,\chi)$ be a pre-2-plectic manifold. The \emph{Poisson-Lie} 2-algebra $\LL(M,\chi)$ has underlying 2-term complex  given by
\[
\xymatrix{
 C^{\infty}(M) \ar[r]^-{(0,d)} &
\{ (\xi,\beta) \in \mathfrak{X}(M) \times \Omega^1(M) \, | \, \iota_\xi \chi = -d\beta\}
}.
\]
with bracket given by 
\[
[(\xi_1,\beta_1),(\xi_2,\beta_2)] = ([\xi_1,\xi_2],\iota_{\xi_2}\,\iota_{\xi_1} \chi)
\]
on $L_0\otimes L_0$ and zero otherwise. The Jacobiator $J:L_0^{\otimes 3} \to L_1$ is given by 
\[
J(\xi_1,\beta_1;\xi_2,\beta_2;\xi_3,\beta_3) = -\iota_{\xi_3}\iota_{\xi_2}\iota_{\xi_1}\chi.
\]
\end{definition}

\section{Applications to 2-plectic and quasi-Hamiltonian geometry} \label{s:app}

In this section, we establish some 2-plectic and quasi-Hamiltonian analogues of  classical results in symplectic geometry. In Theorem \ref{t:invertbutterflymulti-rogers}, we show the Poisson Lie 2-algebra of a 2-plectic manifold $(M,\chi)$ is canonically quasi-isomorphic to the connection-preserving multiplicative vector fields on a bundle gerbe with connective structure and 3-curvature $\chi$.  Using Theorem \ref{t:invertbutterflymulti-rogers}, we obtain Corollary \ref{c:2KostantLift} and Theorem \ref{t:qHamKost}, analogues of a formula of Kostant lifting Lie algebra actions to infinitesimal symmetries of prequantizations in the contexts of 2-plectic and quasi-Hamiltonian geometry.

\subsection{Infinitesimal symmetries of $S^1$-gerbes and  pre-2-plectic manifolds} \label{ss:quasi-isos}
 
 In this section, we establish Theorem \ref{t:invertbutterflymulti-rogers}, which shows that the Poisson-Lie 2-algebra of a pre-2-plectic manifold $(M,\chi)$  is canonically quasi-isomorphic to the Lie 2-algebra of connection preserving multiplicative vector fields on a bundle gerbe with connective structure and 3-curvature $\chi$.
 
Theorem \ref{t:invertbutterflymulti-rogers}  is the 2-plectic analogue of the following well-known result due to Kostant.  Let $(M,\omega)$ be a symplectic manifold,  and suppose we are given a prequantization: a principal $S^1$-bundle $\pi: P\to M$ with connection $\gamma \in \Omega^1(P)$ whose curvature $\mathrm{curv}(\gamma)=\omega$.  Then the formula
\[
f\mapsto \mathrm{Lift}_\gamma(X_f) + \pi^*f\,\genz
\]
defines a Lie algebra isomorphism  $\xymatrix{C^\infty(M) \ar[r]^-{\cong} & \mathcal{Q}(P,\gamma)}$, where $\mathrm{Lift}_\gamma(X_f)$ denotes the horizontal lift of $X_f$, the Hamiltonian vector field of $f$, and  
$$\mathcal{Q}(P,\gamma) = \{ X\in \mathfrak{X}(P)\, : \, L_X\gamma =0 \}$$ denotes the so-called \emph{infinitesimal quantomorphisms} of $P$ \cite{kostant1970quantization,vaughan2015metaplectic}. 

Before arriving at the 2-plectic analogue, Theorem \ref{t:invertbutterflymulti-rogers} below, we first note that given a multiplicative vector field $({\mathbf{x}},{\mathbf{p}})$ on a bundle gerbe $\calG=(X,P,\mu)$ with connective structure $(B,\gamma)$, 
$\delta(\iota_{{\mathbf{p}}} \gamma)= \iota_{({\mathbf{p}},{\mathbf{p}})} \delta \gamma =0$. Therefore, since $P\toto X$ is proper, there exists $g:X\to \RR$ such that $\delta g = \iota_{{\mathbf{p}}} \gamma$. 

Let 
\(
E=\{
({\mathbf{x}},{\mathbf{p}};\alpha,g) \in V_0 \times C^\infty(X) \, \big|\, \delta g = \iota_{{\mathbf{p}}} \gamma
\}
\)
and define structure maps in the diagram,
\[
\xymatrix@R=1em{
L_1 \ar[dd] \ar[dr]^-{\kappa} 	& 	& V_1 \ar[dd] \ar[dl]_-{\lambda}\\
	& E\ar[dl]^-\sigma \ar[dr]_-\rho	&	\\
L_0	&	& V_0
}
\]
 as follows. Let $\rho=\pr_1$ be the obvious projection, $\lambda(a) = (da,-\mathsf{v}_a)$, and $\kappa(f) = (0,0;0,\pi^*f)$. To define $\sigma$, first observe that $\delta(\alpha-\iota_{{\mathbf{x}}}\, B - dg) =0$, and hence there exists a unique 1-form $\varepsilon \in \Omega^1(M)$ such that $\pi^*\varepsilon = \alpha-\iota_{{\mathbf{x}}}\, B - dg$. Let $\xi$ denote the vector field with $\mathbf{x} \sim_\pi \xi$. Then $\pi^*\iota_{\xi}\chi = \iota_{{\mathbf{x}}}\, dB = \pi^*d\varepsilon$. 
Set $\sigma({\mathbf{x}},{\mathbf{p}};\alpha,g) = (\xi,-\varepsilon)$.

\begin{theorem} \label{t:invertbutterflymulti-rogers}
Let $(M,\chi)$ be a pre-2-plectic manifold, and let $\calG=(X,P,\mu)$ be a bundle gerbe with connective structure $(B,\gamma)$ and 3-curvature $\chi$. Let $E$ be as above, with bracket given by
\[
[({\mathbf{x}},{\mathbf{p}},\alpha,g),({\mathbf{z}},{\mathbf{r}},\beta,h)] = ([({\mathbf{x}},{\mathbf{p}},\alpha),({\mathbf{z}},{\mathbf{r}},\beta)], \iota_{{\mathbf{x}}} \beta - \iota_{{\mathbf{z}}} \alpha  + \iota_{{\mathbf{z}}} \iota_{{\mathbf{x}}} B).
\]
Then $E$ defines a natural invertible butterfly $\mathsf{E}: \mathbb{L}(M,\chi) \dashto \mathbb{X}(\calG; B,\gamma)$.
\end{theorem}
\begin{proof} 
To check that $E$ defines a butterfly, we verify the required compatibility of the bracket with the structure maps. Clearly $\sigma$ preserves the brackets. Also, 
$$
[({\mathbf{x}},{\mathbf{p}},\alpha,g), \kappa(h)] = [({\mathbf{x}},{\mathbf{p}},\alpha,g), (\overline{0},0,\pi^*h)] = 0 = \kappa[(\xi,\varepsilon),h],
$$ 
since brackets of elements of mixed degree vanish in $\LL(M,\chi)$.  By Lemma \ref{l:vbracket} and the properties of the bracket for $\mathbb{X}(\calG, B,\gamma)$, we have $[({\mathbf{x}},{\mathbf{p}},\alpha,g), \lambda(a)] = \lambda[({\mathbf{x}},{\mathbf{p}},\alpha),a]$. Finally, to see that $\sigma$ preserves the brackets, 
it suffices to verify that 
\[
-\pi^* \iota_\zeta \iota_\xi \chi =  L_{{\mathbf{x}}} \beta - L_{{\mathbf{z}}} \alpha - \iota_{[{\mathbf{x}},{\mathbf{z}}]} B - d(\iota_{{\mathbf{x}}} \beta - \iota_{{\mathbf{z}}} \alpha  + \iota_{{\mathbf{z}}} \iota_{{\mathbf{x}}} B),
\]
where $\zeta$ denotes the vector field on $M$ with $\mathbf{z}\sim_\pi \zeta$,
which follows  from standard applications of Cartan calculus on differential forms.

Since the Jacobiator for $\mathbb{X}(\calG,B,\gamma)$ is trivial, the Jacobiator condition on $\mathsf{E}$ simplifies to
\[
\kappa J_{\mathbb{L}}(\xi_1,\varepsilon_1;\xi_2,\varepsilon_2;\xi_3,\varepsilon_3) = [({\mathbf{x}}_1,{\mathbf{p}}_1;\alpha_1), [ ({\mathbf{x}}_2,{\mathbf{p}}_2;\alpha_2),({\mathbf{x}}_3,{\mathbf{p}}_3;\alpha_3)]] + \textit{cyc.\ perm.\ }
\]
The $V_0$-component in the above equality follows from the vanishing of the Jacobiator for $\mathbb{X}(\calG, B, \gamma)$. To verify the component in $C^\infty(X)$, it should be verified that
\[
-\pi^*\iota_{\xi_3}\iota_{\xi_2}\iota_{\xi_1}\chi = \iota_{{\mathbf{x}}_1} (L_{{\mathbf{x}}_2} \alpha_3 - L_{{\mathbf{x}}_3} \alpha_2) - \iota_{[{\mathbf{x}}_2,{\mathbf{x}}_3]} \alpha_1 + \iota_{[{\mathbf{x}}_2,{\mathbf{x}}_3]} \iota_{{\mathbf{x}}_1} B + \textit{cyc.\ perm.,\ }
\]
which is a straightforward, albeit tedious, application of Cartan calculus on differential forms---see Proposition \ref{p:jacobiator}.

To show $\mathsf{E}$ is an invertible butterfly, we must check that the diagonal sequences are short exact. That the NW--SE sequence is short exact follows easily from properness of the Lie groupoid $P\toto X$.  Exactness of  the NE--SW sequence is verified next.

To check surjectivity of $\sigma$: given  $(\xi,\varepsilon)\in L_0$, choose a multiplicative lift of $\xi$ and take $\alpha' = -\pi^*\varepsilon + \iota_{{\mathbf{x}}} B$. Next, find a function $f$ with $\delta f = \iota_{{\mathbf{p}}} \gamma$ and  set  $\alpha=\alpha' +df$.  Then $\sigma({\mathbf{x}},{\mathbf{p}};\alpha,f)=(\xi,\varepsilon)$

To check injectivity of $\lambda$: suppose $a\in \Gamma (A_P)$ is a section of the Lie algebroid and $da=0$ and $\mathsf{v}_a =0$.  From $da=0$, we get that ${\mathbf{a}} = 0$, which gives that $a$ lies in the the isotropy Lie algebra (in this case trivial, i.e., $X\times \RR$), whence we view $a$ as a function on $X$, and we get that ${\bar{\mathbf{a}}}=0$, which gives that $\delta a=0$ and hence $a$ descends to $M$. From $\mathsf{v}_a=0$, we see that the vertical part of $a$ must vanish, but since ${\mathbf{a}}=0$ all of $a$ is vertical, so $a$ must vanish.

Exactness at $E$: It's easy to see that $\sigma\circ \lambda=0$ since $\pi$ is a submersion. Suppose $\sigma ({\mathbf{x}},{\mathbf{p}},\alpha,g) =0$. Then $\xi=0$ and $\alpha - \iota_{{\mathbf{x}}} B - dg=0$. The first of those gives  $({\mathbf{x}},{\mathbf{p}})=({\mathbf{a}},{\bar{\mathbf{a}}})$ for some $a\in \Gamma(A_P)$ (See Proposition \ref{p:properties}.)  Since $\delta \mathsf{v}_a = \delta g$, there exists a (unique) $f:M \to \RR$ such that $\pi^*f = g-\mathsf{v}_a$. We may view $f$ as defining a section of the isotropy Lie algebra, as noted in the previous paragraph, by setting $b=\pi^*f$. That is, $b:X\to A_P$ is a vertical vector in $TP\big|_X$ given by $b(x) = f(\pi(x)) \genz \big|_{\epsilon(x)}$.  Note that $\mathsf{v}_b = \pi^*f$, and  $\widetilde{b}=0$. Moreover, $\check{b}=0$ by Proposition \ref{p:properties} . Therefore, $\lambda(a+b)=({\mathbf{x}},{\mathbf{p}},\alpha,g)$.
\end{proof}

\begin{remark} 
Given an element $({\mathbf{x}},{\mathbf{p}};\alpha,g)$ in $E$, 
by Proposition \ref{p:properties}, the function $g:X\to \RR$ gives rise to an isomorphism of connection-preserving multiplicative vector fields $a:(\mathbf{x},\mathbf{p},\alpha) \to (\mathbf{x}, \mathrm{Lift}_\gamma(\mathbf{x}^{[2]}), \alpha-dg)$, where $\mathrm{Lift}_\gamma (\mathbf{x}^{[2]})$ denotes the horizontal lift determined by the connection $\gamma$. (\emph{Cf}.\ \cite[Lemma 2.4]{syvestre2020prequantisation}.) Thus we may view the butterfly $E$ as encoding all possible isomorphisms between  connection-preserving multiplicative vector fields and their horizontal projections, which in turn canonically descends to  Hamiltonian vector fields on the underlying manifold $M$.
\eoe \end{remark}

\begin{remark}\label{r:Ehresmann}
A choice of Ehresmann connection $H\subset TX$ on the submersion $\pi:X\to M$ of the bundle gerbe $\calG$ gives rise to a section $s:L_0 \to E$ of the map $\sigma$ of the butterfly $E$ of Theorem \ref{t:invertbutterflymulti-rogers}, by setting
\[
s(\xi,\beta) = (\mathrm{Hor}(\xi), \mathrm{Lift}_\gamma(\mathrm{Hor}(\xi)^{[2]}); \iota_{\mathrm{Hor}(\xi)} B -\pi^*\beta, 0),
\]
where $\mathrm{Hor}(\xi)$ denotes the horizontal lift of $\xi$ with respect to the connection $H$.
Applying \cite[Proposition 3.4]{noohi2013integrating}, one may write the above quasi-isomorphism as a morphism of Lie 2-algebras. Moreover, the 2-isomorphism class of such a morphism is independent of the choice of connection, since the butterfly $E$ is as well.
\eoe
\end{remark}

\begin{remark} \label{r:FRS}
In \cite[Thm 4.6]{fiorenza2014algebras}, the authors prove, in the more general context of  $n$-plectic geometry and $L_\infty$-algebras,
a  result similar to Theorem \ref{t:invertbutterflymulti-rogers}.  Specializing to $n=2$, the result in  \emph{op.\ cit.}\ shows that $\LL(M,\chi)$ is quasi-isomorphic to a Lie 2-algebra the authors call the \emph{infinitesimal quantomorphisms} of a principal $U(1)$-2-bundle with connection. In that context, a principal $U(1)$-2-bundle is represented by a \v{C}ech-Deligne cocycle, which is equivalent to the data of an $S^1$-gerbe with connective structure given in terms of \v{C}ech data, while the corresponding Lie 2-algebra of infinitesimal quantomorphisms is essentially the Lie 2-algebra of infinitesimal symmetries preserving the connective structure from \cite{collier2011infinitesimal}. (\emph{Cf.}\ Example \ref{eg:collier:subcat} and Remark \ref{r:Lie2Cechdata_multvf}.)
\eoe \end{remark}

\begin{remark} \label{r:SW}
In  \cite{syvestre2020prequantisation}, the authors establish a quasi-isomorphism similar to Theorem \ref{t:invertbutterflymulti-rogers} for bundle gerbes  modelled as lifting bundle gerbes of $\mathsf{PU}(\calH)$-bundles over $M$, where $\calH$ is a Hilbert space and $\mathsf{PU}(\calH)$ denotes the projective unitary group. By choosing a principal connection on the given $\mathsf{PU}(\calH)$-bundle, the authors establish a  quasi-isomorphism similar to \ref{t:invertbutterflymulti-rogers} as a morphism of Lie 2-algebras. (\emph{Cf}.\ Remark \ref{r:Ehresmann}.)
\eoe 
\end{remark}

The special case of Theorem \ref{t:invertbutterflymulti-rogers} for trivial bundle gerbes $\calI_{\omega}$ can be simplified, since in that case the butterfly is induced by a morphism of Lie 2-algebras. Indeed, since the structure map $\sigma$ has a section, by \cite[Prop.\ 3.4]{noohi2013integrating} the butterfly is induced by
\[
\mathbb{L}(M,d\omega) \to \mathbb{X}(\calI_\omega), \quad (\xi,\beta) \mapsto (\xi, \iota_\xi \omega - \beta).
\]

\subsubsection*{Functorial properties of connection preserving multiplicative vector fields}

By Theorem \ref{t:invertbutterflymulti-rogers}, the Lie 2-algebra of connection-preserving multiplicative vector fields on a bundle gerbe $(\calG; B, \gamma)$ with connective structure depends only on its 3-curvature $\chi \in \Omega^3(M)$. It is straightforward to verify that the assignment $(\calG; B, \gamma) \mapsto \mathbb{X}(\calG; B, \gamma)$ may be extended to a pseudo 2-functor
\(
\mathsf{BunGerbe}^{\nabla}(M) \longrightarrow \LieBC,
\)
in the naive way, by sending an isomorphism $(Q,\gamma_Q): (\calG,B,\gamma) \dashto (\calG', B', \gamma')$ to the invertible butterfly $\mathbb{X}(Q,\gamma_Q)=\mathsf{E}'\circ \mathsf{E}^{-1}: \mathbb{X}(\calG; B,\gamma) \dashto \mathbb{X}(\calG'; B',\gamma')$, where $\mathsf{E}$ and $\mathsf{E}'$ are from Theorem \ref{t:invertbutterflymulti-rogers}, and sending all 2-isomorphisms to identities.

\subsection{Analogues of Kostant's formula lifting Lie algebra actions} \label{s:kost}
This section considers 2-plectic and quasi-Hamiltonian analogues of a formula of Kostant in symplectic geometry. These analogues, Corollary \ref{c:2KostantLift} and Theorems \ref{t:qHamKost}, are direct consequences of Theorem \ref{t:invertbutterflymulti-rogers} from the previous section.

We begin by first recalling this formula of Kostant in symplectic geometry.
Consider a Lie group $G$ acting on a symplectic manifold $(M,\omega)$, and suppose $\pi:(P,\gamma) \to (M,\omega)$ is a prequantization. Let $\g$ denote the Lie algebra of $G$.
If the $G$-action on $(M,\omega)$ is Hamiltonian, with moment map $\phi:M\to \mathfrak{g}^*$, we may compose the Kostant Lie algebra isomorphism  $C^\infty(M) \cong \mathcal{Q}(P,\gamma)$ with the (co)moment map $\langle \phi,-\rangle : \g \to C^\infty(M)$, to obtain a
lift 
%\[
%\xymatrix{
%	& \mathcal{Q}(P,\gamma) \ar[d] \\
%\g \ar@{-->}[ur] \ar[r] & \mathfrak{X}(M,\omega).
%}
%\]
of the infinitesimal action $\mathfrak{g} \to \mathfrak{X}_{\mathrm{Ham}}(M,\omega)$, $\xi\mapsto \xi_M$,  to a connection-preserving $\g$-action on $P$, according to Kostant's formula \cite{kostant1970quantization}, 
\begin{equation} \label{eq:kostant}
\mathfrak{g} \longrightarrow \mathcal{Q}(P,\gamma), \quad \xi \mapsto \mathrm{Lift}_\gamma(\xi_M) + \pi^*\langle\phi,\xi\rangle \genz.
\end{equation}

\subsubsection{Kostant's lifting formula in the 2-plectic setting} \label{ss:Kostant2plec}

Analogously, consider a Lie group $G$ acting on a pre-2-plectic manifold $(M,\chi)$.  If the $G$-action preserves $\chi$ and the corresponding $\g$-action $\xi\mapsto \xi_M$ is via Hamiltonian vector fields (i.e., there exist 1-forms $\varepsilon \in \Omega^1(M)$ such that $\iota_{\xi_M} \chi = -d\varepsilon$), the authors in \cite{callies2016homotopy} define a \emph{homotopy moment map} to be a morphism of Lie 2-algebras that lifts the Lie algebra morphism $\g \to \mathfrak{X}(M,\chi)$, $\xi \mapsto \xi_M$,
\[
\xymatrix{
	& \mathbb{L}(M,\chi) \ar[d] \\
\g \ar@{-->}[ur] \ar[r] & \mathfrak{X}(M,\chi).
}
\]
where $\mathbb{L}(M,\chi) \to \mathfrak{X}(M,\chi)$ is the natural projection, viewed as a morphism of Lie 2-algebras  \cite[Prop.\ 4.8]{callies2016homotopy}, and $\mathfrak{X}(M,\chi)$ denotes the Lie algebra of Hamiltonian vector fields on $(M,\chi)$. 

The Lie 2-algebra of connection preserving multiplicative vector fields on a bundle gerbe  also admits a natural projection $\mathbb{X}(\calG;B, \gamma) \to \mathfrak{X}(M,\chi)$, and the following diagram 2-commutes.
\begin{equation} \label{eq:proj}
\xymatrix@R=0.7em@C=0.7em{
\mathbb{L}(M,\chi) \ar@{-->}[rr]^{\mathsf{E}} \ar[ddr] & & {\mathbb{X}(\calG; B, \gamma)} \ar[ddl]\\
\rrtwocell\omit{} & & \\
	& \mathfrak{X}(M,\chi) &
}
\end{equation}
Therefore, we readily obtain the following Corollary of Theorem \ref{t:invertbutterflymulti-rogers}, which is analogous to Kostant's formula \eqref{eq:kostant} above in the context of 2-plectic geometry.

\begin{corollary} \label{c:2KostantLift}
Let $(\calG,B,\gamma)$ be a bundle gerbe over $M$ with connective structure and 3-curvature $\chi$. If a Lie group $G$ acts on $M$ preserving $\chi$ and the corresponding $\g$-action is Hamiltonian, admitting a homotopy moment map, then by composing with the invertible butterfly $\mathsf{E}$ from Theorem \ref{t:invertbutterflymulti-rogers} we obtain a
 butterfly $\g \dashto \XX(\calG;B,\gamma)$ lifting the $\g$-action on $M$---that is, the following diagram 2-commutes:
\[
\xymatrix@R=1em@C=2em{
	& \XX(\calG;B,\gamma) \ar[dd]\\
	&	\\
\g \ar@{-->}[ruu] \urtwocell\omit{} \ar[r]& \mathfrak{X}(M,\chi)
}
\]
\end{corollary}
\begin{proof}
This follows immediately from the diagram \eqref{eq:proj}.
\end{proof}

If we assume the bundle gerbe over $M$ is $G$-equivariant, with $G$-equivariant connective structure $(B_G,\gamma)$, then the natural $\g$-action on $\calG$,
\(
\xi \mapsto (\xi_X, \xi_P, 0)
\)
is  connection-preserving (in a stricter sense than Definition \ref{d:connexpresmultivf}), and thus gives rise to a morphism of Lie 2-algebras $\mathsf{H}:\g \to \mathbb{X}(\calG; B,\gamma)$ that preserves brackets on the nose (i.e., with $H=0$). Let $\chi_G = \chi-\varpi$ denote the $G$-equivariant 3-curvature of the bundle gerbe. As shown in \cite{callies2016homotopy},  $\chi_G$ gives rise to a homotopy moment map $\mathsf{F}:\g \longrightarrow \mathbb{L}(M,\chi)$, where
\[
\mathsf{F}_0(\xi) = (\xi_M, \varpi(\xi)), \quad \text{and} \quad F(\xi\otimes \zeta) = -\iota_{\xi_M}\varpi(\zeta).
\]
As above, this gives a lift of the $\g$-action as in Corollary \ref{c:2KostantLift}, and 
Theorem \ref{t:g-actsagree} below shows that this lift agrees (up to 2-isomorphism) with the given  $\g$-action on $\calG$.

\begin{theorem} \label{t:g-actsagree}
Let $(M,\chi)$ be a pre-2-plectic manifold, equipped with a $G$-action preserving $\chi$. Suppose that $\chi$ admits a $G$-equivariant extension $\chi_G$ giving rise to a homotopy moment map  $\mathsf{F}:\g \to \mathbb{L}(M,\chi)$.
Let $\calG = (X,P,\mu)$ be a strongly $G$-equivariant bundle gerbe over $M$ with $G$-equivariant connection $(B_G, \gamma)$ and 3-curvature $\chi_G \in \Omega^3_G(M)$. Then the following diagram 2-commutes,
\[
\xymatrix@R=1em@C=2em{
	& {\mathbb{L}(M,\chi)} \ar@{-->}[ddr]^-{\mathsf{E}}	& \\
\rrtwocell\omit{}&	&		\\
\g   \ar[ruu]^-{\mathsf{F}} \ar[rr]_-{\xi \mapsto (\xi_X, \xi_P, 0)} & &  \mathbb{X}(\calG; B,\gamma)  \\	
}
\]
where $\mathsf{E}$ is the butterfly from Theorem \ref{t:invertbutterflymulti-rogers}.
\end{theorem}
\begin{proof}
For $\xi \in \g$, write $B_G(\xi) = B - \nu(\xi)$, where $\nu:\g \to C^\infty(M)$, and write $\chi_G(\xi) = \chi -\varpi(\xi)$, where $\varpi:\g \to \Omega^1(M)$. Recall that since $(B_G,\gamma)$ is a $G$-equivariant connective structure whose 3-curvature is $\chi_G$, we have $\iota_{\xi_P}\gamma = \delta \nu(\xi)$, and $\iota_{\xi_X} B + d\nu(\xi) = \pi^*\varpi(\xi)$. Also, $\iota_{\xi_M} \chi = -d\varpi(\xi)$, so $\varpi(\xi)$ is a Hamiltonian 1-form for $\xi_M$.

The composition $\mathsf{E} \circ \mathsf{F}$ is given by the butterfly 
\[
\xymatrix@R=1em{
0 \ar[dd] \ar[dr]	& 	& \Gamma(A_P) \ar[dd]^d \ar[dl]_-{(0,\lambda)}\\
	& \g\oplus_{L_0} E\ar[dl]^-{\pr_1} \ar[dr]_-{\rho\circ \pr_2}	&	\\
\g	&	& \mathbb{X}(\calG; B, \gamma)_0
}
\]
where $\lambda$ and $\rho$ are the structure maps of $E$, described in the paragraph preceding Theorem \ref{t:invertbutterflymulti-rogers}. Since the homotopy moment map is given by $\mathsf{F}_0(\xi)=(\xi_M,\varpi(\xi))$ in degree 0, elements of $\g\oplus_{L_0} E$ are of the form $(\xi,\mathbf{x},\mathbf{p},\alpha,g)$ such that $\mathbf{x}\sim_\pi \xi_X$, $\delta g = \iota_\mathbf{p} \gamma$, and $\alpha - \iota_\mathbf{x} B - dg =- \pi^*\varpi(\xi)$. The bracket on $\g\oplus_{L_0} E$ is given by 
\[
[(\xi,\mathbf{x},\mathbf{p},\alpha,g),(\zeta,\mathbf{z},\mathbf{r},\alpha,h)] = ([\xi,\zeta], [({\mathbf{x}},{\mathbf{p}},\alpha,g),({\mathbf{z}},{\mathbf{r}},\beta,h)]+(0,0,0,\pi^*\iota_{\xi_M} \varpi(\zeta))).
\]

Consider the linear section $s:\g \to \g\oplus_{L_0} E$ given by $s(\xi) = (\xi, \xi_X, \xi_P, 0, \nu(\xi))$. By \cite[Proposition 3.4]{noohi2013integrating}, the butterfly $\mathsf{E} \circ \mathsf{F}$ is then naturally 2-isomorphic to a morphism of Lie 2-algebras $\mathsf{H}:\g \to \mathbb{X}(\calG; B, \gamma)$, which in degree 0 is given by the $\g$-action on $\calG$. Moreover, the chain homotopy $H:\g \otimes \g \to \Gamma(A_P)$ is given by $H(\xi\otimes \zeta) = [s(\xi),s(\zeta)]-s([\xi,\zeta])$ (where as noted in \cite{noohi2013integrating} we are implicitly using the exactness of the NE-SW sequence in the butterfly). Since $s$ preserves brackets in all but the last coordinate, $H(\xi\otimes \zeta)$ defines a section in the isotropy Lie algebra of $P\toto X$, which can be viewed as a function on $X$, whence
\begin{align*}
 H(\xi\otimes \zeta)	&=\iota_{\zeta_X} \iota_{\xi_X} B + \pi^*\iota_{\xi_M}\varpi(\zeta) - \nu([\xi,\zeta]) \\
	&= \iota_{\zeta_X} \iota_{\xi_X} B + \iota_{\xi_X} (\iota_{\zeta_X} B + d\nu(\zeta) )- \nu([\xi,\zeta]) \\
	&=L_{\xi_X} \nu(\zeta) - \nu([\xi,\zeta]) \\
	&=0
\end{align*}
by $G$-equivariance of $\nu$.
\end{proof}

\subsubsection{Kostant's lifting formula in the quasi-Hamiltonian setting} \label{ss:KostantqHam}

Before discussing the  quasi-Hamiltonian analogue of Kostant's formula \eqref{eq:kostant}, we briefly review the elementary definitions of quasi-Hamiltonian group actions with Lie group-valued moment map.
For the remainder of this section,  $G$ shall denote a connected, compact  Lie group with Lie algebra $\g$, equipped with an invariant inner product $\langle-,-\rangle$. 
 Let $\lmc$ and $\rmc$ denote the left and right invariant Maurer-Cartan forms on  $G$, and let $\eta = \frac{1}{12} \langle \lmc, [\lmc, \lmc] \rangle \in \Omega^3(G)$ denote the Cartan 3-form. We view $G$ as a $G$-manifold equipped with the conjugation action. 

\begin{definition} \cite{alekseev1998lie} \label{d:qHamiltonian}
A \emph{quasi-Hamiltonian $G$-space} is a triple $(M,\omega,\Phi)$ consisting of a $G$-manifold $M$, a $G$-invariant 2-form $\omega \in \Omega^2(M)^G$ and a $G$-equivariant map $\Phi:M \to G$, satisfying
\begin{enumerate}
\item[(i)] $d\omega + \Phi^* \eta =0$, 
\item[(ii)] $\iota_{\xi_M}\omega = \frac{1}{2} \Phi^*\langle \lmc + \rmc, \xi \rangle \quad \text{for all } \xi \in \g$, and
\item[(iii)] at every point $x\in M$, $\ker \omega_x \cap \ker d\Phi_x = \{ 0 \}$.
\end{enumerate}
\end{definition}

Analogous to coadjoint orbits in the dual of the Lie algebra  $\g^*$ as fundamental examples of symplectic manifolds with Hamiltonian $G$-actions, conjugacy classes $\mathcal{C}\hookrightarrow G$ are important examples of quasi-Hamiltonian $G$-spaces. Another important example is the double $G\times G$, with $G$ acting by conjugation on each factor and $G$-valued moment map given by the commutator $(g,h)\mapsto ghg^{-1}h^{-1}$.  Using the \emph{fusion product} operation, these examples form the building blocks for a (finite dimensional) quasi-Hamiltonian description of the moduli space of flat $G$-bundles over a compact surface $\Sigma$, with prescribed holonomies along the boundary curves.  (See \cite{alekseev1998lie} for details.)

In \cite{meinrenken2012twisted}, Meinrenken defines prequantization of quasi-Hamiltonian $G$-spaces using Dixmier-Douady bundles (or simply DD-bundles), which are equivalent to $S^1$-bundle gerbes. More precisely, the bicategory of DD-bundles over a fixed manifold is equivalent to the bicategory of bundle gerbes (and similarly for their $G$-equivariant counterparts)  \cite{krepski2018differential}. Next we recall the notion of prequantization in this context, providing a formulation using bundle gerbes.

To begin, observe that for a quasi-Hamiltonian $G$-space $(M,\omega,\Phi)$, the first condition in Definition \ref{d:qHamiltonian} (together with the fact that $d\eta=0$) is equivalent to the pair $(\omega,\eta)$ defining a closed \emph{relative} differential form in $\Omega^3(\Phi)$. In fact, the first two conditions in Definition \ref{d:qHamiltonian}, are equivalent to the pair $(\omega,\eta_G)$ defining a closed relative $G$-equivariant differential form in $\Omega^3_G(\Phi)$, where \(
\eta_G(\xi) = \eta +  \frac{1}{2} \langle \lmc + \rmc, \xi \rangle
\). 

Recall that a ($G$-equivariant) \emph{prequantization} of a quasi-Hamiltonian $G$-space  $(M,\omega,\Phi)$ is a relative $G$-equivariant bundle gerbe $(Q,\calG)$  for $\Phi$ with relative connective structure $(\gamma_Q; B_G, \gamma)$ whose relative $G$-equivariant 3-curvature is $(\omega,\eta_G)$. That is, a prequantization consists of a $G$-equivariant bundle gerbe\footnote{In this context, it suffices to  consider \emph{strongly} equivariant bundle gerbes (i.e., bundle gerbes admitting compatible $G$-actions on the gerbe data). As noted in Example \ref{e:basicgerbe}, explicit constructions of strongly equivariant bundle gerbes over $G$ exist in the literature.} $\calG$ over $G$ with $G$-equivariant connective structure $(B_G,\gamma) \in \Omega^2_G(X) \oplus \Omega^1(P)^G$ whose $G$-equivariant 3-curvature is $\eta_G$, together with a $G$-equivariant trivialization $(Q,\gamma_Q): \calI_{-\omega} \dashto \Phi^* (\calG,B_G,\gamma)$. 
(The existence of such a trivialization implies the relation $d_G\omega + \Phi^*\eta_G =0$.)

\begin{remark}
Note that a prequantization for $(M,\omega,\Phi)$ exists if and only if the relative cohomology class $[(\omega,\eta_G)] \in H^3_{G}(\Phi;\RR)  $ is integral (i.e., in the image of the coefficient homomorphism $H^3_{G}(\Phi;\RR)  \to H^3_G(\Phi;\ZZ)$.
For related work on the integrality conditions required for prequantization of the moduli space of flat $G$-bundles, see \cite{krepski2008pre,krepski2014prequantization,krepski2013verlinde,meinrenken2003basic}.
\eoe \end{remark}

%\subsubsection*{A quasi-Hamiltonian analogue of Kostant's formula}
%
%For a Hamiltonian $G$-space $(M,\omega)$ with prequantization $(P,\gamma) \to (M,\omega)$, Kostant's formula \eqref{eq:kostant} provides a lift of the $\g$-action on $M$ to a connection-preserving $\g$-action on $P$. In this Section, we propose an analogue of such a lift for quasi-Hamiltonian $G$-spaces. 

Let $(M,\omega,\Phi)$ be a quasi-Hamiltonian $G$-space. Let $(Q,\gamma_Q): \calI_{-\omega} \dashto \Phi^*(\calG; B, \gamma)$ be a (not necessarily $G$-equivariant) trivialization. Such a trivialization may be viewed as a `non-equivariant'  prequantization of $(M,\omega,\Phi)$. The trivialization $(Q,\gamma_Q)$ gives rise to an invertible butterfly $\mathbb{X}(Q,\gamma_Q):\mathbb{X}(\calI_{-\omega}) \dashto \mathbb{X}(\Phi^*(\calG; B,\gamma))$ (see Section \ref{ss:quasi-isos}).  Moreover, the following Proposition follows immediately.

\begin{proposition} \label{p:square}
Let $(M,\omega,\Phi)$ be a quasi-Hamiltonian $G$-space. Suppose $(\calG, B,\gamma)$ is a bundle gerbe over $G$ with connective structure whose 3-curvature is $\eta$, and let  $(Q,\gamma_Q): \calI_{-\omega} \dashto \Phi^*(\calG, B, \gamma)$ be a trivialization. Then the diagram
\[
\xymatrix{
\LL(M,-d\omega) \ar@{-->}[r] \ar@{=}[d] \drtwocell \omit{}& \mathbb{X}(\calI_{-\omega}) 
\ar@{-->}[d]^{\mathbb{X}(Q,\gamma_Q)} 
\\
\LL(M,\Phi^*\eta) \ar@{-->}[r] & \mathbb{X}(\Phi^*(\calG;B,\gamma))
}
\] 
2-commutes, where the horizontal morphisms are the invertible butterflies from Theorem \ref{t:invertbutterflymulti-rogers}.
\end{proposition}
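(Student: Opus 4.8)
The plan is to obtain the Proposition as a direct instance of Theorem~\ref{t:triv-iso}, applied to the pullback data over $M$.

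First I would observe that $\Phi^*(\calG,B,\gamma)$ is a bundle gerbe over $M$ carrying a connective structure, obtained by pulling $\calG$ and the forms $B,\gamma$ back along $\Phi$, and that its $3$-curvature is $\Phi^*\eta$, since the $3$-curvature of $(\calG,B,\gamma)$ is $\eta$ and passing to the $3$-curvature is natural under pullback. Condition~(i) of Definition~\ref{d:qHamiltonian} reads $d\omega+\Phi^*\eta=0$, so $\Phi^*\eta=d(-\omega)$, which is exactly the $3$-curvature of the trivial gerbe $\calI_{-\omega}$; thus the existence of the given trivialization $(Q,\gamma_Q):\calI_{-\omega}\dashto\Phi^*(\calG,B,\gamma)$ is consistent with the error-$2$-form condition of Example~\ref{e:trivialization}, and, crucially, $\LL(M,-d\omega)=\LL(M,\Phi^*\eta)$, so that the left-hand vertical equality in the square is meaningful.

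Next I would invoke Theorem~\ref{t:triv-iso} with $\omega$ replaced by $-\omega$ and $(\calG,B,\gamma)$ replaced by $\Phi^*(\calG,B,\gamma)$. That theorem produces precisely the invertible butterfly $\mathsf{Q}:\mathbb{X}(\calI_{-\omega})\dashto\mathbb{X}(\Phi^*(\calG;B,\gamma))$ appearing on the right-hand edge of the square, together with a canonical $2$-isomorphism $\varphi:\mathsf{Q}\circ\mathsf{E}'\Rightarrow\mathsf{E}$, where $\mathsf{E}':\LL(M,-d\omega)\dashto\mathbb{X}(\calI_{-\omega})$ and $\mathsf{E}:\LL(M,-d\omega)\dashto\mathbb{X}(\Phi^*(\calG;B,\gamma))$ are the invertible butterflies of Theorem~\ref{t:invertbutterflymulti-rogers} attached to $\calI_{-\omega}$ and to $\Phi^*(\calG,B,\gamma)$, respectively. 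Under the identification $\LL(M,-d\omega)=\LL(M,\Phi^*\eta)$, the butterfly $\mathsf{E}'$ is the top edge of the square, $\mathsf{E}$ is the bottom edge, the left edge is the identity, and the right edge is $\mathsf{Q}$; hence $\varphi$ is exactly the $2$-morphism exhibiting the claimed $2$-commutativity, and the argument is complete.

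I do not expect a genuine obstacle here: all of the substance --- constructing $\mathsf{Q}$, checking compatibility of its bracket with the structure maps, verifying that its diagonal sequences are short exact, and writing down $\varphi$ --- has already been carried out in the proof of Theorem~\ref{t:triv-iso}. The one step that deserves a moment's care is confirming that the hypothesis of that theorem is met, namely that $(Q,\gamma_Q)$ really is a trivialization of bundle gerbes \emph{with connection} over $M$ in the sense of Example~\ref{e:trivialization}; this is ensured by condition~(i) of Definition~\ref{d:qHamiltonian}, which is precisely the required error-$2$-form identity.
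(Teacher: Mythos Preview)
Your proposal is correct and is essentially the same as the paper's own argument: the paper simply says that by Theorem~\ref{t:triv-iso} the trivialization $(Q,\gamma_Q)$ yields the invertible butterfly $\mathsf{Q}$ compatible with the butterflies from Theorem~\ref{t:invertbutterflymulti-rogers}, which is exactly what you spell out. Your additional remarks about condition~(i) of Definition~\ref{d:qHamiltonian} ensuring $\LL(M,-d\omega)=\LL(M,\Phi^*\eta)$ and checking the error-$2$-form hypothesis are helpful elaborations, but the core approach is identical.
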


As in Section \ref{ss:Kostant2plec}, recall that  $G$-equivariant extensions of closed 3-forms give rise to homotopy moment maps \cite{callies2016homotopy}.
For a quasi-Hamiltonian $G$-space $(M,\omega,\Phi)$, we may view $(M,-d\omega)$ as a pre-2-plectic manifold. From condition (i) of Definition \ref{d:qHamiltonian}, it follows that $\LL(M,-d\omega)=\LL(M,\Phi^*\eta)$, and condition (ii) shows that the two $G$-equivariant extensions of $-d\omega$,
\[
-(d_G\omega)(\xi) = -d\omega + \iota_{\xi_M}\omega, \quad \text{and} \quad (\Phi^*\eta_G)(\xi) = \Phi^*\eta +  \frac{1}{2} \Phi^*\langle \lmc + \rmc, \xi \rangle
\]
agree. Therefore, the resulting homotopy moment maps agree. Combining this observation with Proposition \ref{p:square}, we obtain the diagram in the following Theorem, which we consider a quasi-Hamiltonian analogue of the Kostant formula \eqref{eq:kostant}.

\begin{theorem} \label{t:qHamKost}
Let $(M,\omega,\Phi)$ be a quasi-Hamiltonian $G$-space. Suppose $(\calG, B,\gamma)$ is a bundle gerbe over $G$ with connective structure whose 3-curvature is $\eta$, and let  $(Q,\gamma_Q): \calI_{-\omega} \dashto \Phi^*(\calG, B, \gamma)$ be a trivialization. 
The following diagram 2-commutes:
\[
\xymatrix@R=1em{
&\LL(M,-d\omega) \ar@{-->}[r] \ar@{=}[dd] \ddrtwocell \omit{} & \mathbb{X}(\calI_{-\omega}) \ar@{-->}[dd]^{\mathbb{X}(Q,\gamma_Q)} \\
\g \ar[ur] \ar[dr]&&\\
&\LL(M,\Phi^*\eta) \ar@{-->}[r] & \mathbb{X}(\Phi^*(\calG;B,\gamma)).
}
\] 
\end{theorem}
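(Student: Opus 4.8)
The plan is to assemble the $2$-commuting diagram from pieces that are already in place. The left triangle is the diagram asserting that the two homotopy moment maps $\g \to \LL(M,-d\omega)$ and $\g\to\LL(M,\Phi^*\eta)$ agree; this follows immediately from conditions (i) and (ii) of Definition \ref{d:qHamiltonian}, since (i) gives $\LL(M,-d\omega)=\LL(M,\Phi^*\eta)$ (the two complexes are literally equal) and (ii) gives $-(d_G\omega)(\xi) = (\Phi^*\eta_G)(\xi)$, so the homotopy moment maps produced from these two $G$-equivariant extensions via \cite{callies2016homotopy} coincide. The right square is exactly Proposition \ref{p:square}, whose $2$-cell we already possess: it records that the invertible butterfly $\mathsf{E}$ of Theorem \ref{t:invertbutterflymulti-rogers} applied to $\Phi^*(\calG,B,\gamma)$, the butterfly $\mathsf{E}'$ applied to $\calI_{-\omega}$, and the butterfly $\mathsf{Q}$ of Theorem \ref{t:triv-iso} fit into a $2$-isomorphism $\varphi:\mathsf{Q}\circ\mathsf{E}' \Rightarrow \mathsf{E}$.

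First I would fix notation: let $\Psi:\g\to\LL(M,-d\omega)$ denote the homotopy moment map (a morphism, hence a butterfly, of Lie $2$-algebras) obtained from the equivariant extension $-d_G\omega$, and let $\Psi':\g\to\LL(M,\Phi^*\eta)$ be the one from $\Phi^*\eta_G$; by the previous paragraph $\Psi = \Psi'$ under the identification $\LL(M,-d\omega)=\LL(M,\Phi^*\eta)$. Then the top composite in the diagram is $\mathsf{E}'\circ\Psi$ and the bottom composite is $\mathsf{E}\circ\Psi'$. The $2$-cell filling the outer square is obtained by whiskering $\varphi$ (from Theorem \ref{t:triv-iso}) on the right by $\Psi$: namely $\varphi * \id_\Psi$ is a $2$-isomorphism $(\mathsf{Q}\circ\mathsf{E}')\circ\Psi \Rightarrow \mathsf{E}\circ\Psi$, and using $\Psi=\Psi'$ this is precisely a $2$-isomorphism $\mathsf{Q}\circ(\mathsf{E}'\circ\Psi) \Rightarrow \mathsf{E}\circ\Psi'$, which is the required filling. (Associativity of butterfly composition is only associative up to coherent $2$-isomorphism, so strictly speaking one also composes with the associator of $\LieButterfly$, but this is harmless and part of the bicategorical bookkeeping.)

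The only genuine content beyond bookkeeping is verifying that the left triangle commutes on the nose, i.e. that the homotopy moment map construction of \cite{callies2016homotopy} is functorial enough that equal $G$-equivariant extensions of a closed invariant $3$-form yield equal morphisms $\g\to\LL(M,\chi)$. This is true essentially by inspection of their construction: the homotopy moment map is built directly out of the chosen equivariant extension $\widetilde\chi(\xi)=\chi+\chi_0(\xi)$ by the explicit formula $\xi\mapsto(\xi_M,\,-\chi_0(\xi))$ in degree $0$ (suitably interpreted), so equal extensions give equal formulas. Here $-d_G\omega$ and $\Phi^*\eta_G$ have the same underlying $3$-form $-d\omega=\Phi^*\eta$ by (i) and the same linear-in-$\xi$ correction term by (ii), so the outputs agree.

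I expect the main obstacle to be purely expository: making the bicategorical composition and whiskering precise enough that "the diagram $2$-commutes" has an unambiguous meaning, given that butterfly composition is only associative up to $2$-isomorphism and that the left triangle involves an identification $\LL(M,-d\omega)=\LL(M,\Phi^*\eta)$ of two a priori different-looking Lie $2$-algebras. Since every ingredient—Proposition \ref{p:square}, Theorem \ref{t:triv-iso}, and the equality of the two equivariant extensions—has already been established, the proof itself is short: it amounts to writing $\varphi*\id_\Psi$ and invoking $\Psi=\Psi'$.
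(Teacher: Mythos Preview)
Your proposal is correct and follows essentially the same approach as the paper: the paper's proof is simply the paragraph preceding the theorem statement, which observes that conditions (i) and (ii) of Definition \ref{d:qHamiltonian} force the two homotopy moment maps to coincide, and then invokes Proposition \ref{p:square} for the right square. Your additional bicategorical bookkeeping (whiskering $\varphi$ by $\Psi$, associators) makes explicit what the paper leaves implicit, but the argument is the same.
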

In other words, using the homotopy moment maps arising from the $G$-equivariant extensions of $-d\omega$, we may lift the $\g$-actions on $M$ and $G$ to infinitesimal symmetries on $\calI_{-\omega}$ and $(\calG,B,\gamma)$ that are compatible with the given trivialization $(Q,\gamma_Q)$ coming from a prequantization of $(M,\omega,\Phi)$. (\emph{Cf}.\  the second concluding remark in \cite[Section 13]{callies2016homotopy}.)

As noted in Example \ref{e:basicgerbe}, there are explicit constructions in the literature of (strongly) $G$-equivariant gerbes over compact simple Lie groups $G$ with $G$-equivariant 3-curvature $\eta_G$. This allows for a more explicit description of the butterfly morphisms 
\begin{equation} \label{eq:butterflyactions}
\g \to \mathbb{L}(M,-d\omega) \dashto \mathbb{X}(\calI_{-\omega}), \quad \text{and} \quad 
\g \to \mathbb{L}(M,\Phi^*\eta) \dashto \mathbb{X}(\Phi^*(\calG;B,\gamma))
\end{equation}
appearing in Theorem \ref{t:qHamKost}.

Indeed, assume  $G$ is a compact simple Lie group, and suppose $\calG=(X,P,\mu)$ is a $G$-equivariant bundle gerbe over $G$ with $G$-equivariant connective structure $(B_G, \gamma)$ whose $G$-equivariant 3-curvature is $\eta_G$. Then the pullback $\Phi^*(\calG; B_G, \gamma)$ along a $G$-equivariant map $\Phi:M\to G$  is a  $G$-equivariant bundle gerbe. Denote $\Phi^*\calG$ by $(Y, R,\nu)$.

By Theorem \ref{t:g-actsagree}, it follows that the compositions \eqref{eq:butterflyactions} are 2-isomorphic to infinitesimal actions
by connection-preserving multiplicative vector fields,
$$
\g \to \mathbb{X}(\calI_{-\omega}), \quad \xi \mapsto (\xi_M,0),
$$
(see Example \ref{eg:multivf_trivgerbe} to recall $\mathbb{X}(\calI_{-\omega})$), and 
$$
\g \to \mathbb{X}(\Phi^*(\calG; B, \gamma)), \quad \xi \mapsto (\xi_Y, \xi_R,0),
$$
respectively 
(where we adopt the notation introduced in the discussion preceding Definition \ref{d:connexpresmultivf}).

\appendix

\section{A formula used in the proof of Theorem \ref{t:invertbutterflymulti-rogers}}

We include the verification of a formula used in the proof of Theorem \ref{t:invertbutterflymulti-rogers}.

\begin{proposition} \label{p:jacobiator}
Let $N$ be a manifold, and let $B \in \Omega^2(N)$. Let $X,Y$, and $Z$ denote vector fields on $N$, and suppose that there exist differential 1-forms $a,b$, and $c$  such that $L_X B =da$, $L_Y B=db$, and $L_Z B=dc$. Then,
\[
-\iota_{Z}\iota_{Y}\iota_{X}dB = \iota_{X} (L_{Y} c - L_{Z} b) - \iota_{[Y,Z]} a + \iota_{[Y,Z]} \iota_{X} B + \text{cyc.\ perm.,\ }
\]
\end{proposition}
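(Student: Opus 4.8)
The plan is to prove this by a direct Cartan-calculus computation, expanding everything in terms of Lie derivatives, contractions, and the exterior derivative, using only the standard identities $L_X = \iota_X d + d\iota_X$, $[L_X,\iota_Y] = \iota_{[X,Y]}$, $[\iota_X,\iota_Y]=0$, and $L_{[X,Y]} = [L_X,L_Y]$. The strategy is to transform the left-hand side rather than the right-hand side, since $-\iota_Z\iota_Y\iota_X dB$ is the more rigid object. First I would write $dB = d\eta$ is not available, so instead I expand $\iota_X dB = L_X B - d\iota_X B = da - d\iota_X B$, hence $-\iota_Z\iota_Y\iota_X dB = -\iota_Z\iota_Y(da) + \iota_Z\iota_Y d\iota_X B$.

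Next I would repeatedly apply the identity $\iota_Y d\omega = L_Y\omega - d\iota_Y\omega$ to peel off one contraction at a time, and the commutator identity $\iota_Y d\iota_Z = L_Y\iota_Z - \iota_Z\iota_Y d$ (valid on $1$-forms, where $\iota_Z$ kills anything). Concretely, $-\iota_Z\iota_Y(da) = -\iota_Z(L_Y a - d\iota_Y a) = -\iota_Z L_Y a + \iota_Z d\iota_Y a = -\iota_Z L_Y a$, since $\iota_Y a$ is a function and $\iota_Z d(\text{function})$ contributes but must be tracked; similarly for the $\iota_Z\iota_Y d\iota_X B$ term one gets contributions of the form $L_Y\iota_Z\iota_X B$, $\iota_{[Y,Z]}\iota_X B$, and $\iota_Z\iota_X da$ after using $L_X B = da$. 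The bookkeeping produces a collection of terms, some already matching the right-hand side and some that only cancel after taking the cyclic sum over $(X,Y,Z)$; I would organize the computation so that at the end the terms are grouped by type ($L_\bullet\iota_\bullet\iota_\bullet B$, $\iota_{[\bullet,\bullet]}a$, $\iota_\bullet L_\bullet c$, etc.) and the cyclic symmetrization is applied once at the very end.

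The main obstacle I anticipate is the sign and index bookkeeping: the right-hand side already contains an explicit cyclic sum, so I must be careful not to double-count, and the identity $L_X B = da$ (and its cyclic analogues) must be invoked at exactly the right moments to convert $d$-exact pieces into contractions of $a,b,c$ — invoking it too early or too late produces terms that look like they don't cancel. A secondary subtlety is that $\iota_X B$ is a genuine $1$-form (not closed), so terms like $d\iota_Y\iota_X B$ do not vanish and must be carried through until they are absorbed into $L_\bullet$ terms or cancel in the cyclic sum. The cleanest route is probably to first establish the auxiliary identity
\[
\iota_Z\iota_Y\iota_X dB = \iota_Z\iota_Y L_X B - \iota_Z L_Y \iota_X B + L_Z\iota_Y\iota_X B - \iota_{[Y,Z]}\iota_X B - \iota_Z\iota_{[X,Y]}B + \text{(terms vanishing cyclically)},
\]
which follows from $\iota_X dB = L_XB - d\iota_X B$ and two applications of $[L,\iota]=\iota_{[\cdot,\cdot]}$, and then substitute $L_XB = da$ etc. and symmetrize. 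I expect the verification to be a page of routine but sign-sensitive manipulation with no conceptual difficulty once the order of operations is fixed.
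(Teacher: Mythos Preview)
Your approach is correct and essentially identical to the paper's: both proofs are direct Cartan-calculus computations that repeatedly apply $\iota_\bullet d = L_\bullet - d\iota_\bullet$ and $[L_\bullet,\iota_\bullet]=\iota_{[\bullet,\bullet]}$, substituting $L_X B = da$ (and cyclic analogues) at the appropriate moments, with the only cosmetic difference being which contraction is peeled off first. One small slip in your sketch: the term $\iota_Z d(\iota_Y a)$ does not vanish (it equals $Z(\iota_Y a)$), and indeed such terms are exactly what combine into the $\iota_\bullet L_\bullet$ pieces on the right-hand side, so be sure to carry them through rather than drop them.
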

\begin{proof}
This is a straightforward application of the given relations and the Cartan calculus. Indeed,
\begin{align*}
\iota_X \iota_Y \iota_Z dB &= \iota_X\iota_YL_Z B - \iota_X\iota_Y d\iota_Z B \\
&= \iota_X \iota_Y dc - \iota_X L_Y \iota_Z B + \iota_X d\iota_Y \iota_Z B \\
&= \iota_X \iota_Y dc - \iota_X \iota_{[Y,Z]} B - \iota_X \iota_Z L_Y B + \iota_X d\iota_Y \iota_Z B \\
&= \iota_X \iota_Y dc - \iota_X \iota_{[Y,Z]} B - L_X \iota_Y \iota_Z B \\
&= \iota_X \iota_Y dc - \iota_X \iota_Z db - \iota_X \iota_{[Y,Z]} B + \iota_{[X.Y]} \iota_Z B + \iota_Y L_X \iota_Z B \\
&= \iota_X \iota_Y dc - \iota_X \iota_Z db - \iota_X \iota_{[Y,Z]} B + \iota_{[X.Y]} \iota_Z B + \iota_Y \iota_{[X,Z]} B + \iota_Y \iota Z da \\
&= \iota_X L_Y c - L_X \iota_Y c - \iota_X L_Z b +L_X \iota_Z b + \iota_YL_Z a - L_Y\iota_Z a - \iota_X \iota_{[Y,Z]} B  \\
& \quad -\iota_Z \iota_{[X,Y]} B - \iota_Y \iota_{[Z,X]} B \\
&= \iota_X L_Y c - \iota_{[X,Y]} c - \iota_Y L_X c - \iota_X L_Z b + \iota_{[X,Z]} b + \iota_Z L_X b + \iota_Y L_Z a - \iota_{[Y,Z]} a \\
&\quad - \iota_Z L_Y a - \iota_X \iota_{[Y,Z]}B - \iota_Z \iota_{[X,Y]} B - \iota_Y\iota_{[Z,X]} B 
\end{align*}
\end{proof}

%\bibliographystyle{plain}
%\bibliography{myrefs}  %may need to add some ../ 's depending on the location of the present file

\end{document}